\newtheorem{theorem}{Theorem}[section]
\newtheorem{lemma}[theorem]{Lemma}
\newtheorem{definition}[theorem]{Definition}
\newtheorem{example}[theorem]{Example}
\newtheorem{remark}[theorem]{Remark}
\newtheorem{notation}[theorem]{Notation}
\newenvironment{manualtheorem}[1]{%
  \manualtheoreminner
}{\endmanualtheoreminner}
\pgfplotsset{compat = newest}
\newcommand*{\diam}{\mathrm{diam}}
\newcommand*{\dist}{\mathrm{d}}
\definecolor{myaqua}{RGB}{15,205,255}
\begin{document}

\title[Fractal dimensions for Iterated Graph Systems]{Fractal dimensions for Iterated Graph Systems}

\author{Nero Ziyu Li}

\address{Department of Mathematics, Huxley Building, Imperial College London, London, SW7 2AZ, United Kingdom}

\subject{fractal geometry, graph theory, complex networks}

\keywords{iterated graph systems, Minkowski dimension, Hausdorff dimension, graph fractals, complex networks}

\corres{Nero Ziyu Li\\
\email{z5222549@zmail.unsw.edu.au, ziyu.li21@imperial.ac.uk}}

\begin{abstract}
Building upon~\cite{Li2023}, this study aims to introduce fractal geometry into graph theory, and to establish a potential theoretical foundation for complex networks.
Specifically, we employ the method of substitution to create and explore fractal-like graphs, termed deterministic or random iterated graph systems. While the concept of substitution is commonplace in fractal geometry and dynamical systems, its analysis in the context of graph theory remains a nascent field.

By delving into the properties of these systems, including diameter and distal, we derive two primary outcomes. Firstly, within the deterministic iterated graph systems, we establish that the Minkowski dimension and Hausdorff dimension align analytically through explicit formulae. Secondly, in the case of random iterated graph systems, we demonstrate that almost every graph limit exhibits identical Minkowski and Hausdorff dimensions numerically by their Lyapunov exponents.

The exploration of iterated graph systems holds the potential to unveil novel directions.
These findings not only, mathematically, contribute to our understanding of the interplay between fractals and graphs, but also, physically, suggest promising avenues for applications for complex networks. 
\end{abstract}


\maketitle




\tableofcontents




\section{Introduction and main results}
\subsection{Introduction}
Graph theory originated in the 18th century and saw rapid development throughout the 20th century, meanwhile fractal geometry was proposed and explored during the 20th century as well. 
Both fields have undergone rigorous mathematical development to this day.
The integration of these two fields is generally associated with the study of complex networks:
this is primarily due to the work of Barabási on scale-free networks~\cite{Barabasi99} and Song's exploration~\cite{SoHaMa05} of fractal networks, which demonstrate how these mathematical concepts can be applied to understand and describe the structure of various complex systems.

However, few attempts have been made to develop the theory: most relevant studies are based on real-world data and finite networks, rather than mathematical objects.
Such a gap is to be filled, and it is of great interest to explore the behaviour of infinite graph fractals.


To establish the fractals for graphs, we utilise the idea of substitution and Iterated Function Systems (IFS), where these two concepts are more than common in dynamical systems and fractal geometry.
It is also always natural to relate substitutions to dynamics because a natural source of substitutions is tiling and substitutive dynamical system~\cite{Rau82,Thur89,Fogg02,Barge06}.
Since Hutchinson\cite{Hutch81} set up the iterated function systems (IFS) strictly in a general framework, large numbers of notable works contribute to the studies of self-similar sets, for instance directed-graph IFS~\cite{Maul88,Edg92}, IFS with overlaps~\cite{Fan00,Lau01,Sid07} and random IFS~\cite{Fal86,Hutch00}.

Building on these foundations, we rigorously define the Iterated Graph System by iteratively substituting the (coloured) arcs in a graph (randomly) with certain fixed graphs. 
However, the concept of the iterated graph system is not entirely new; it evolves from the predecessor known as substitution networks, which were first introduced by Xi et al.~\cite{XiWaWaYuWa17}, where fractality and scale-freeness are obtained.
Li et al.~\cite{LiYuXi18,LiYaWa19} brought coloured graphs into this model and the average distance was studied by Ye et al.~\cite{XiYe19b,YeXi19a} in which the existence of average distance is exhibited under some circumstances.
Iterated graph systems can also be viewed both as the graph version of graph-directed fractals~\cite{mauldin1988hausdorff,barnsley1989recurrent} and also as a generalisation of them because the substitution process inherent to iterated graph systems includes a graph-directed methodology. 
We will discuss this issue in another work.


Therefore, from a mathematical perspective, studying iterated graph systems is at least intriguing: we anticipate that fractals on graphs will exhibit unique behaviours not seen in Euclidean spaces.
This is indeed the case, as evidenced by the specially defined degree dimension~\cite{Li2023} for graph fractals. 
Physically, iterated graph systems provide a solid mathematical foundation for the study of complex networks, especially those with scale-freeness and fractality.
The potential applications include considerations of fractal dimensions, degree distributions, average distance, random walks, diffusion process, and percolation, among others.
For instance, a quick corollary in this paper is that for any network characterised by a specific fractal dimension, it is possible to correspondingly design an iterated graph system that mirrors this dimension.

\subsection{Main results}
The deterministic (or random) iterated graph systems roughly refer to the system generating infinite graphs (randomly) iteratively by independent coloured arc substitutions. 
Find Fig.~\ref{fig:example0} as an example.
Rigorous definitions will be given in Section~\ref{sec:deterministic iterated graph systems} and Subsection~\ref{sec:random iterated graph systems}.

\newcommand{\redarc}[2]{\draw[draw=red,             thick,>=stealth,->] (#1) -- (#2);\fill[black] (#1) circle (1.5pt);\fill[black] (#2) circle (1.5pt);}
\newcommand{\bluearc}[2]{\draw[draw=myaqua!90!black,thick,>=stealth,->] (#1) -- (#2);\fill[black] (#1) circle (1.5pt);\fill[black] (#2) circle (1.5pt);}

\newcommand{\mynode}[2]{\begin{scope}[shift={(#1)},scale=#2]\node[node] (0,0) () {};\end{scope}}
\newcommand{\Rrule}[4]{\begin{scope}[shift={(#1)},scale=#2,rotate=#3]
  \foreach \nn/\x/\y in {0/0/0, 1/1/0, 2/1/1, 3/2/0, 4/2/1, 5/3/0}{\node[node] (\nn) at (\x,\y) {};}
  \draw[black] (0) -- (5) (1) -- (2) (3) -- (4);
  \foreach \nn in {0,...,5}{\node[node] () at (\nn) {};}#4\end{scope}}
\newcommand{\RRrule}[3]{\begin{scope}[shift={(#1)},scale=#2,rotate=#3]
    \Rrule{0,0}{1}{  0}{}
    \Rrule{6,0}{1}{180}{}
    \Rrule{6,0}{1}{  0}{}
    \Rrule{3,3}{1}{270}{}
    \Rrule{6,0}{1}{ 90}{}\end{scope}}
    
\newcommand{\Rrulea}[4]{\begin{scope}[shift={(#1)},scale=#2,rotate=#3]
  \foreach \na in {0,...,4}{\coordinate (\na) at (72*\na-162:1.577) {};}
  \draw[draw=red,>=stealth,->] (0) -- (1);
  \draw[draw=red,>=stealth,->] (1) -- (2);
  \draw[draw=red,>=stealth,->] (4) -- (3);
  \draw[draw=myaqua!90!black,>=stealth,->] (3) -- (2);
  \draw[draw=myaqua!90!black,>=stealth,->] (0) -- (4);
  \foreach \na in {1,3,4}{\fill[black] (\na) circle (1.5pt);}
  \foreach \na in {0,2}{\fill[black] (\na) circle (2.25pt);}#4\end{scope}}

\newcommand{\Rruleb}[4]{\begin{scope}[shift={(#1)},scale=#2,rotate=#3]
\foreach \na in {0,...,5}{\pgfmathsetmacro\x{2.12*cos(18*\na+45)}
\pgfmathsetmacro\y{2.12*sin(18*\na+45)-1.5}
\coordinate (\na) at (\x,\y) {};}
\foreach \na in {6,...,9}{\pgfmathsetmacro\x{2.12*cos(18*\na+135)}
\pgfmathsetmacro\y{2.12*sin(18*\na+135)+1.5}
\coordinate (\na) at (\x,\y) {};}
  \draw[draw=red,>=stealth,->] (9) -- (0);
  \draw[draw=red,>=stealth,->] (1) -- (2);
  \draw[draw=red,>=stealth,->] (3) -- (2);  
  \draw[draw=red,>=stealth,->] (3) -- (4);
  \draw[draw=red,>=stealth,->] (4) -- (5);
  \draw[draw=myaqua!90!black,>=stealth,->] (0) -- (1); \draw[draw=myaqua!90!black,>=stealth,->] (5) -- (6);
  \draw[draw=myaqua!90!black,>=stealth,->] (7) -- (6);
  \draw[draw=myaqua!90!black,>=stealth,->] (8) -- (7); \draw[draw=myaqua!90!black,>=stealth,->] (8) -- (9);
  \foreach \na in {0,...,9}{\fill[black] (\na) circle (1.5pt);}
  \foreach \na in {0,5}{\fill[black] (\na) circle (2.25pt);}#4\end{scope}}
  
  \newcommand{\RanRrule}[5]{\begin{scope}[shift={(#1)},scale=#2,rotate=#3]#4
    \fill[black] (0,0) circle (2.25pt);\draw (0,-1) node(){};
    \fill[black] (3,0) circle (2.25pt);#5\end{scope}}

\newcommand{\Raa}[4]{\RanRrule{#1}{#2}{#3}{\redarc{ 1,0}{0,0}\redarc{ 2,1}{2,0}\redarc{2,1}{3,0}
\bluearc{1,0}{1,1}\bluearc{1,0}{2,0}\bluearc{3,0}{2,0}}{#4}}
\newcommand{\Rab}[4]{\RanRrule{#1}{#2}{#3}{\bluearc{1,0}{0,0}\bluearc{1,0}{2,0}\bluearc{3,0}{2,0}}{#4}}
\newcommand{\Rba}[4]{\RanRrule{#1}{#2}{#3}{\redarc{ 0,0 }{1.5,-.75}\redarc{1.5,0}{0,0}\redarc{3,0}{1.5,0}
\redarc{1.5,.75}{3,0}\bluearc{1.5,.75}{0,0}\bluearc{3,0}{1.5,-.75}}{#4}}
\newcommand{\Rbb}[4]{\RanRrule{#1}{#2}{#3}{\redarc{1.5,0}{ 0,0}\redarc{60:1.5}{1.5,0}
\bluearc{0,0}{60:1.5}\bluearc{1.5,0}{3,0}}{#4}}

\newcommand{\ABnodes}{\draw (0,-.25) node[below] {\makebox(0,0){\footnotesize $A$}};\draw (3,-.25) node[below] {\makebox(0,0){\footnotesize $B$}};}

\renewcommand{\Rrulea}[4]{\begin{scope}[shift={(#1)},scale=#2,rotate=#3]
  \foreach \na in {0,...,4}{\coordinate (\na) at (72*\na-162:1.577) {};}
  \draw[draw=red,>=stealth,->] (0) -- (1);
  \draw[draw=red,>=stealth,->] (2) -- (1);
  \draw[draw=red,>=stealth,->] (3) -- (4);
  \draw[draw=myaqua!90!black,>=stealth,->] (2) -- (3);
  \draw[draw=myaqua!90!black,>=stealth,->](4) -- (0);
  \foreach \na in {1,3,4}{\fill[black] (\na) circle (1.5pt);}
  \foreach \na in {0,2}{\fill[black] (\na) circle (2.25pt);}#4\end{scope}}

\renewcommand{\Rruleb}[4]{\begin{scope}[shift={(#1)},scale=#2,rotate=#3]
  \foreach \na in {0,...,5}{\pgfmathsetmacro\x{2.12*cos(18*\na+45)}
                            \pgfmathsetmacro\y{2.12*sin(18*\na+45)-1.5}
                            \coordinate (\na) at (\x,\y) {};}
  \foreach \na in {6,...,9}{\pgfmathsetmacro\x{2.12*cos(18*\na+135)}
                            \pgfmathsetmacro\y{2.12*sin(18*\na+135)+1.5}
                            \coordinate (\na) at (\x,\y) {};}
  \draw[draw=red,>=stealth,->] (9) -- (0); \draw[draw=red,>=stealth,->] (1) -- (2);
  \draw[draw=red,>=stealth,->] (2)-- (3);
  \draw[draw=red,>=stealth,->] (4)-- (3);
  \draw[draw=red,>=stealth,->]  (4)-- (5);
  \draw[draw=myaqua!90!black,>=stealth,->] (0) -- (1);
  \draw[draw=myaqua!90!black,>=stealth,->](5) -- (6);
  \draw[draw=myaqua!90!black,>=stealth,->](6)-- (7);
  \draw[draw=myaqua!90!black,>=stealth,->](8)-- (7);
  \draw[draw=myaqua!90!black,>=stealth,->](9) -- (8);
  \foreach \na in {0,...,9}{\fill[black] (\na) circle (1.5pt);}
  \foreach \na in {0,5}{\fill[black] (\na) circle (2.25pt);}#4\end{scope}}


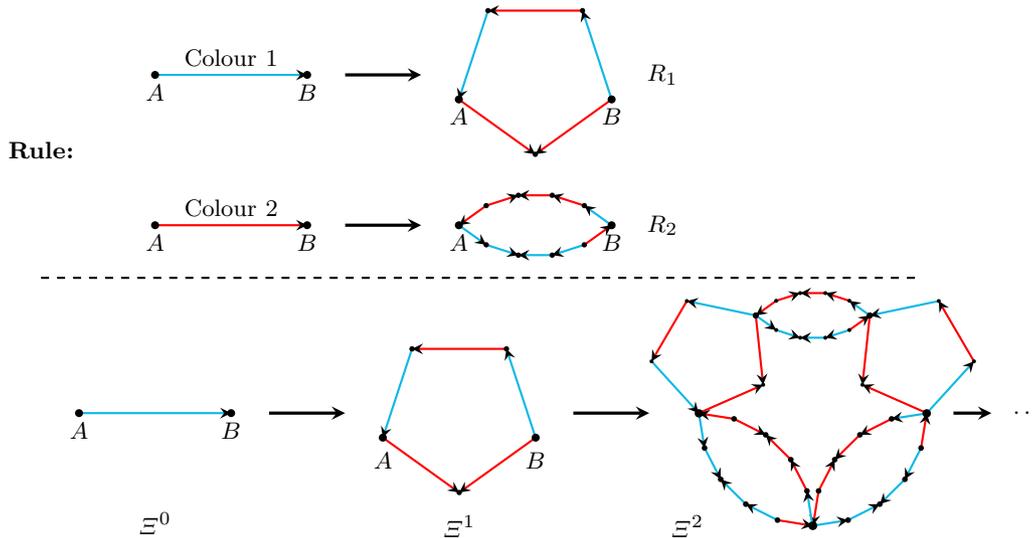
\begin{figure}[ht]
\centering
\begin{tikzpicture}[scale=1,black,thick]
  \begin{scope}[scale=1,shift={(1,2)}]
  \draw (-1.5,-1) node {\normalsize \textbf{Rule:}};
    \draw[draw=myaqua!90!black,>=stealth,->] (0,0) -- (2,0)  node[midway,above]{Colour 1};
    \fill (0,0) circle (1.5pt) (2,0) circle (1.5pt);
    \draw (0,0) node[below] {\footnotesize $A$};
    \draw (2,0) node[below] {\footnotesize $B$};
    \draw[very thick,>=stealth,->] (2.5,0) --++ (1,0);
    \Rrulea{5,0}{.67}{0}{\draw (-162:1.577) node[below] {\footnotesize $A$};
                         \draw (-18: 1.577) node[below] {\footnotesize $B$};
                         \draw (2.5,0) node {\makebox(0,0){$R_{1}$}};}
  \end{scope}
  \begin{scope}[scale=1,shift={(1,0)}]
    \draw[draw=red,>=stealth,->] (0,0) -- (2,0)  node[midway,above]{Colour 2};
    \fill (0,0) circle (1.5pt) (2,0) circle (1.5pt);
    \draw (0,0) node[below] {\footnotesize $A$};
    \draw (2,0) node[below] {\footnotesize $B$};
    \draw[very thick,>=stealth,->] (2.5,0) --++ (1,0);
    \Rruleb{5,0}{.67}{0}{\draw (-1.5,0) node[below] {\footnotesize $A$};
                         \draw ( 1.5,0) node[below] {\footnotesize $B$};
                         \draw ( 2.5,0) node {\makebox(0,0){$R_{2}$}};}
  \draw[dashed] (-1.5,-.7) -- (10,-.7);
  \end{scope}
  \begin{scope}[scale=1,shift={(0,-2.5)}]
    \draw[draw=myaqua!90!black,>=stealth,->] (0,0) -- (2,0) node[midway,below=1.2cm]{$\Xi^0$};
    \fill (0,0) circle (1.5pt) (2,0) circle (1.5pt);
    \draw (0,0) node[below] {\footnotesize $A$};
    \draw (2,0) node[below] {\footnotesize $B$};
    \draw (-.75,0) node {};
    \draw[very thick,>=stealth,->] (2.5,0) --++ (1,0);
    \Rrulea{5,0}{.67}{0}{};
    \draw (4,-.4) node[below] {\footnotesize $A$};
    \draw (6,-.4) node[below] {\footnotesize $B$};
    \draw (5,-1.5) node {$\Xi^1$};
    \draw[very thick,>=stealth,->] (6.5,0) --++ (1,0);
    \begin{scope}[shift={(9.65,0)},scale=.5]
      \Rrulea{150:3.085}{1}{ 60}{}
      \Rrulea{ 30:3.085}{1}{300}{}
      \Rruleb{  0,2.6  }{1}{  0}{}
      \Rruleb{ 1.5,-1.5}{1.41}{45}{}
      \Rruleb{-1.5,-1.5}{1.41}{315}{}
      \fill (-3,0) circle (2.25pt) (3,0) circle (2.25pt);
      \draw ( 90:1.2) node(){{}};
    \end{scope}
    \draw (8,-1.5) node {$\Xi^2$};
    \draw[very thick,>=stealth,->] (11.5,0) --++ (0.5,0);
    \draw (12.5,0) node {$\dots$};
    
  \end{scope}
\end{tikzpicture}
\caption{An example of deterministic iterated graph system}
\label{fig:example0}
\end{figure}

This paper mainly contains two conclusions, the first of which solves the problem left in~\cite{LiYaWa19}: 

\begin{manualtheorem}{\ref{thm:Deterministic Substitution}}
Given a deterministic primitive iterated graph system and its graph limit $\Xi$, we have
\[
\dim_B(\Xi)
=\dim_H(\Xi)
= \dfrac{\log\rho(\mathbf{M})}{\log \rho_{\min}(\mathcal{D})}\,.
\]
\end{manualtheorem}
where $\rho(*)$ represents the spectral radius.
Relevant definitions will be given in Subsection~\ref{sec:deterministic iterated graph systems} and Section~\ref{Section: Dimension}, and the proof will be presented in Section~\ref{Section: Proof of fractality} later.

Another result is to construct random iterated graph systems and prove the associated graph limits $\Xi$ satisfy the following property.

\begin{manualtheorem}{\ref{thm:Random Substitution}}
Given a random primitive iterated graph system and one of its graph limits $\Xi$, we have
\[
\mathbb{P} \Big(
\dim_B (\Xi) =  \dim_H (\Xi)
= \frac{ \mathcal{L}(\mathcal{M})}{ \min_{\mathcal{D}\in\mathscr{D}} \mathcal{L}(\mathcal{D})}
\Big)=1 \,,
\]
\end{manualtheorem}
where $\mathcal{L}(*)$ is the Lyapunov exponent.
Related definitions will be addressed in Subsection~\ref{sec:random iterated graph systems}, and the proof will be shown in Subsection~\ref{sec:Proof of Random}.

\bigskip

The concept of iterated graph systems is straight and simple.
However, the difficulty and novelty of this paper lie in the solution of a combinatorial matrix problem (\textbf{Subsection~\ref{subSection: A combinatorial matrix problem}}), estimation of diameter and distal (\textbf{Subsection~\ref{subSection: Estimation of distance}}) and analysis of random substitution (\textbf{Section~\ref{sec:Random substitution}}).


\section{Deterministic iterated graph systems}\label{sec:Preliminaries for deterministic iterated graph systems}

\subsection{Minkowski dimension for graphs}
\label{Section: Dimension}
The Minkowski dimension is also known as box-counting dimension.
Graphs with consistent weight of arcs naturally construct metric spaces. 
However, the Minkowski dimension on graphs does have particular properties.
In this paper, we only consider weakly connected graphs containing no self-loops or multiple edges but with directions and consistent weights.
{\em Infinite graphs} are graphs with infinitely many arcs.
Infinite graphs can be an induced limit of a convergent graph sequence $G^0, G^1, \ldots, G^n, \ldots, G^\infty$.

We consider weights on edges and arcs. 
For any weighted undirected graph~$G$, 
the {\em weighted distance} between any two nodes $a$ and $b$ in~$G$ is
\begin{align*}
  \dist_G(a,b) = 
  \inf\Big\{ \sum \mu(e) \::\: e \in E(P) \text{ where $P$ is a path from $a$ to $b$ in $G$}\Big\}\,.
\end{align*}
For any finite weighted undirected graph $G$,
define the {\em weighted diameter} $\Delta(G)$ to be the maximal distance between any two nodes in~$G$:
\[
  \Delta(G) = \sup\{\dist_G(a,b): a,b \in V(G)\}\,.
\]
In this paper, the distances $\dist_G(a,b)$ and diameter $\Delta(G)$ are {\em always} extended to finite weighted directed graphs~$G$
simply by ignoring the arc directions and just considering the underlying undirected graph~$\underline{G}$, as follows:
\begin{align*}
  \dist_G(a,b)&:= \dist_{\underline{G}}(a,b)\\\text{and}\qquad
  \Delta(G) &:=   \Delta(\underline{G})\,.
\end{align*}
These definitions also extend to unweighted undirected (resp., directed) graphs by 
assigning the weight 1 to each of their edges (resp., arcs); 
then $\dist_G(a,b)$ and $\Delta(G)$ are the usual (unweighted) distances and diameter for graphs~$G$.

Let $G$ be any finite graph, either directed or undirected and either weighted or unweighted.
The {\em scaled graph $\hat{G} = (V(\hat{G}),E(\hat{G}))$} of $G = (V(G),E(G))$ is 
the weighted graph with the same nodes and arcs/edges as~$G$,
and with arc/edge weights $\mu(e) = \frac{1}{\Delta(G)}$ for all $e\in E(G)$ to form that $\Delta(\hat{G}) = 1$.  
The graph $\hat{G}^\infty$ can also be infinite, in which case each arc weight $\mu(e)$ equals~0.

Therefore,
\begin{equation}
\label{eq:ellL}
N_{\ell}(\hat{G})
=N_{L}(G)
\end{equation}
while $\ell=L/\Delta(G)$.

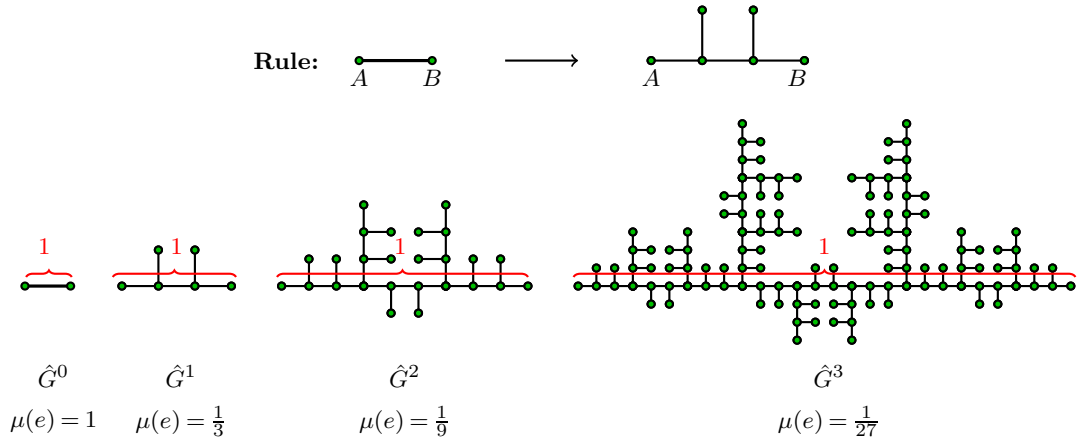
\begin{figure}[ht]
\centering\vspace*{40mm}\makebox(0,0){%
\begin{tikzpicture}[shift={(-12,0)},scale=.6,black,thick]
  \tikzstyle{node}=[circle,fill=green!75!black,draw=black ,inner sep = 0.33mm, outer sep = 0mm]
  
  \begin{scope}[shift={(0,1)},scale=1.6]
\draw (-4,0) node {\normalsize \textbf{Rule:}};
\tikzstyle{node}=[circle,fill=green!75!black,draw=black ,inner sep = 0.33mm, outer sep = 0mm]
\draw[very thick] (-3,0) -- (-2,0);
\mynode{-3,0}{20}
\mynode{-2,0}{20}
\draw (-3,0) node[below] {\footnotesize $A$};
\draw (-2,0) node[below] {\footnotesize $B$};
\draw (1,0) node[below] {\footnotesize $A$};
\draw (3,0) node[below] {\footnotesize $B$};
\draw[->,thick] (-1,0)--(0,0);
\Rrule{1,0}{0.7}{0}{}
  \end{scope}
  
  \begin{scope}[shift={(-12.125,-4)},scale=1]
    \draw[very thick] (0,0) -- (1,0);
    \mynode{0,0}{5}
    \mynode{1,0}{5}
  \end{scope}
  
  \Rrule{-10,-4}{.8}{0}{}
  \RRrule{-6.5,-4}{.6}{0}
  
  \begin{scope}[shift={(0,-4)},scale=.4]
    \RRrule{0,0}{1}{  0}{}
    \draw[decorate,decoration={brace,mirror},red] (-27.75,0.5) -- (-30.25,0.5);
    \draw (-29.25,2.2)[red] node {$1$};
    \draw (-28.75,-5) node {$\hat{G}^0$};
    \draw (-28.75,-7.5) node {$\mu(e)=1$};
    \RRrule{18,0}{1}{180}{}
    \draw[decorate,decoration={brace,mirror},red] (-18.75,0.5) -- (-25.5,0.5);
    \draw (-22,2.2)[red] node {$1$};
    \draw (-21.75,-5) node {$\hat{G}^1$};
    \draw (-21.75,-7.5) node {$\mu(e)=\frac{1}{3}$};
    \RRrule{18,0}{1}{  0}{}
    \draw[decorate,decoration={brace,mirror},red] (-2.75,0.5) -- (-16.5,0.5);
    \draw (-9.74,2.2)[red] node {$1$};
    \draw (-9.5,-5) node {$\hat{G}^2$};
    \draw (-9.5,-7.5) node {$\mu(e)=\frac{1}{9}$};
    \RRrule{9,9}{1}{270}{}
    \draw[decorate,decoration={brace,mirror},red] (27.25,0.5) -- (-0.25,0.5);
    \draw (13.5,2.2)[red] node {$1$};
    \draw (13.75,-5) node {$\hat{G}^3$};
    \draw (13.75,-7.5) node {$\mu(e)=\frac{1}{27}$};
    \RRrule{18,0}{1}{ 90}{}
  \end{scope}
\end{tikzpicture}}
\vspace*{30mm}
\caption{Scaled graphs associated to iterated graph systems}
\label{fig:unit substitution}
\end{figure}

\begin{example}\label{ex:substitution-network}{\rm 
An example of scaled graphs is given in Fig.~\ref{fig:unit substitution}, 
which shows the first four scaled graphs $\hat{G}^0, \hat{G}^1, \hat{G}^2, \hat{G}^3$
associated to the iterated graph system with $\hat{G}^0$ as initial graph 
and with $\hat{G}^1$ as the substitution rule graph.
For simplicity, arcs are here drawn as undirected edges.
As $\Delta(G^n) = 3^n$, the weight of each arc in $\hat{G}^n$ is scaled to~$\frac{1}{3^n}$.}
\end{example}

\begin{definition}
Let $N_{\ell}(\hat{G})$ be the minimum number of boxes 
with weighted diameter strictly {\em less} than $\ell$ needed to cover the vertices set of graph~$\hat{G}$. 
That is, $V(G) \subset \bigcup V(U_i) $ for the family of boxes $\{U_i\}$.
\end{definition}

\begin{definition}\label{def:Fractality for Graphs}{\rm
The Minkowski dimension is defined as
\begin{equation*}\label{eq:graphlimit}
  \dim_B(G) := \dim_B(\hat{G})=
  \lim_{\ell\to\,0}\frac { \log N_{\ell}(\hat{G})}{-\log \ell}
\end{equation*}
if such a limit exists.
We say that a graph $G$ has the {\em fractality} property 
if and only if the Minkowski dimension of $G$ exists and is positive.
In this case, we call $G$ a {\em graph fractal}.}
\end{definition}


\subsection{Deterministic iterated graph systems}\label{sec:deterministic iterated graph systems}

\begin{definition}
A double $\mathscr{R}=(\Xi^0,\mathcal{R})$ is called a {\em deterministic iterated graph system} if
\begin{gather*}
\Xi^0 \text{ is a finite directed graph} \\
\text{and }\mathcal{R}=\{R_{i}\}_{i=1}^{\lambda} \text{ is a family of directed graphs where } \lambda\in\mathbb{N}.
\end{gather*}
\end{definition}

Recall Fig.~\ref{fig:example0} for example.
In this paper, let the number of colours be $\lambda \in \mathbb{N}$.
Each directed rule graph $R_{i}$ has a node $A$ and a node $B$ that respectively replace the beginning node $A$ and ending node $B$ of $e$;
this will determine exactly how $R_{i}$ replaces~$e$.
In addition, in this paper we always require $\dist_{R_{i}}(A,B)\geq 2$ for all $i$.

To construct a sequence of growing graphs, start from the initial graph $\Xi^0$, which is usually just a single arc (for instance a blue arc in Fig.~\ref{fig:example0}).
We construct $\Xi^1$ by replacing all $k$-coloured arc in $\Xi^0$ by $R_{k}$.
Note here, as stated above, the substitutions regarding node $A$ and $B$ are unique.
We then iteratively replace all $k$-coloured arcs for all $k$ in $\Xi^1$ to obtain $\Xi^2$.
In this way, denote the graph after $n\in \mathbb{N}$ iterations by $\Xi^n$;
accordingly, we have a sequence of graphs $\{\Xi^n\}_{n=0}^n$.
We also call the graph sequence $\Xi^0,\Xi^1,\ldots,\Xi^n,
\ldots,\lim_{n\to \infty}\Xi^n=\Xi$ {\em substitution networks}.
We call the graph limit $\Xi$ a {\em graph limit} for the substitution networks or for a deterministic iterated graph system (existence proved by Subsection~\ref{subsection: Existence}).


\subsection{Relevant definitions}

\begin{notation}{\rm
For any $k$-dimensional vector $\mathbf{x}=(x_1,\dots,x_n)\in\mathbb{R}_+^{k}$, 
write $[\mathbf{x}]_i=x_i$ for each $i = 1,\ldots,k$.
Define $\mathbf{\boldsymbol{\chi}}(G)$ to be the vector whose $j$-th entry is
the number of $j$-coloured arcs in graph $G$.
For instance, $\chi(\Xi^1)=(2,3)$ in Fig.~\ref{fig:example0}.
let $\mathbf{u}\leq \mathbf{v}$ denote that $[\mathbf{u}]_i\leq [\mathbf{v}]_i$ for all $i=1,\ldots,k$.
}
\end{notation}

\begin{definition}\label{def:arc matrix}{\rm
Define the $\lambda \times \lambda$ matrix $\mathbf{M} = ({m}_{ij})$ with entries
\[
  {m}_{ij} = [\boldsymbol{\chi}(R_{i})]_j \,.
\]}
\end{definition}

\begin{definition}\label{def:Path Matrix D}{\rm
Recall that a path in a graph is {\em simple} if it has no repeating nodes.

Donate a path between nodes $A$ and $B$ in underlying (undirected) $R_i$ by $P_i$, and
\[
\mathcal{P}_i=\big\{P_i \::\: P_i \subset R_i \big\}.
\]

Consider the Cartesian product 
\[
  \mathcal{P}= \prod_{j=1}^{\lambda} \mathcal{P}_{j}\,.
\]
Each element (or say choice) $\mathcal{C}= (P_{1},\ldots,P_{\lambda})\in\mathcal{P}$ is a vector of length $\lambda$ each whose entries $P_{i}$ is in $\mathcal{P}_{i}$.
Define, for each $i=1,\ldots,\lambda$,
\[
  \mathcal{V}_i = \big\{ \boldsymbol{\chi}(P_i) \::\: P_i \in \mathcal{P}_i\big\}\,.
\]
Let $\mathcal{D}$ be the set of all matrices 
\[
    \mathbf{D}_{\mathcal{C}}
  = \begin{pmatrix}\mathbf{d}_1\\[-1mm]\vdots\\\mathbf{d}_\lambda\end{pmatrix}
  = \begin{pmatrix}\boldsymbol{\chi}(\mathcal{C}_1)\\[-1mm]\vdots\\\boldsymbol{\chi}(\mathcal{C}_\lambda)\end{pmatrix}\,
\]
where $\mathbf{d}_i\in \mathcal{V}_i$ for each $i=1,\ldots,\lambda$ or, 
equivalently, $\mathcal{C}_i\in\mathcal{P}_i$. 
Let $\rho(*)$ be spectral radius, and $\rho_{\min}(\mathcal{D})$ 
denote the smallest spectral radius of matrices in $\mathcal{D}$:
\[
  \rho_{\min}(\mathcal{\mathcal{D}}) := \min\big\{\rho(\mathbf{D}) \::\: \mathbf{D}\in\mathcal{D}\big\}
\]
Also define
\[
  \mathcal{D}_{\min}=\{\mathbf{D} \::\: \rho(\mathbf{D})=\rho_{\min}(\mathcal{D}),\mathbf{D}\in\mathcal{D}\}.
\]

If $\mathbf{M}$ and all matrices in $\mathcal{D}$ are primitive (or positive), we say the iterated graph systems is {\bf primitive}.
}
\end{definition}

\begin{theorem}[main theorem]\label{thm:Deterministic Substitution}
Given a deterministic primitive iterated graph system $\mathcal{R}$ and graph limit $\Xi$, we have
\[
\dim_B(\Xi)
=\dim_H(\Xi)
= \dfrac{\log\rho(\mathbf{M})}{\log \rho_{\min}(\mathcal{D})}\,.
\]
\end{theorem}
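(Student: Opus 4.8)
The plan is to establish the two equalities $\dim_B(\Xi)=\dim_H(\Xi)$ and the closed-form value separately, treating the Minkowski upper bound, the Hausdorff lower bound, and their coincidence through a single scaling analysis of the graph sequence $\{\Xi^n\}$. First I would set up the growth rate of the arc populations: by Definition~\ref{def:arc matrix}, $\boldsymbol{\chi}(\Xi^{n+1})=\boldsymbol{\chi}(\Xi^n)\mathbf{M}$, so $\boldsymbol{\chi}(\Xi^n)=\boldsymbol{\chi}(\Xi^0)\mathbf{M}^n$, and since $\mathbf{M}$ is primitive, Perron--Frobenius gives $\|\boldsymbol{\chi}(\Xi^n)\|\asymp \rho(\mathbf{M})^n$; the total number of arcs (hence, up to bounded factors, the number of nodes) of $\Xi^n$ grows like $\rho(\mathbf{M})^n$. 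This controls the numerator $N_\ell(\hat\Xi)$: covering $\hat\Xi$ at scale $\ell\approx \rho_{\min}(\mathcal D)^{-n}$ requires on the order of $\rho(\mathbf{M})^n$ boxes, giving $\dim_B(\Xi)\le \log\rho(\mathbf M)/\log\rho_{\min}(\mathcal D)$ once the denominator scale is justified.

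The denominator requires the diameter estimates promised in Subsection~\ref{subSection: Estimation of distance}. The key point is that $\Delta(\Xi^n)$ grows like $\rho_{\min}(\mathcal D)^n$: an upper bound on $\Delta(\Xi^n)$ comes from picking, in each rule graph $R_i$, a shortest $A$-$B$ path, which after $n$ iterations yields a walk whose length vector evolves under some $\mathbf D\in\mathcal D$, while the \emph{lower} bound — the harder direction — uses the ``distal'' notion to show no pair of images of $A,B$ can be joined by anything substantially shorter than what the worst (spectrally smallest) matrix in $\mathcal D$ forces; this is exactly why $\rho_{\min}(\mathcal D)$ and $\mathcal D_{\min}$ appear. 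Combining $\Delta(\Xi^n)\asymp\rho_{\min}(\mathcal D)^n$ with the scaling relation~\eqref{eq:ellL}, $N_\ell(\hat\Xi)=N_L(\Xi)$ with $\ell=L/\Delta(\Xi)$, converts the node-count growth into the stated box-counting dimension, so $\dim_B(\Xi)$ exists and equals the formula.

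For the Hausdorff lower bound I would use the standard mass-distribution (Frostman) principle: put a natural measure on $\Xi$ (equivalently on its metric completion) assigning each ``$n$-cylinder'' — the piece of $\Xi$ that is the image of a single arc of $\Xi^0$ under $n$ substitutions — mass proportional to $\rho(\mathbf M)^{-n}$, while its diameter is $\asymp \rho_{\min}(\mathcal D)^{-n}$ by the diameter estimate. Checking the Frostman condition $\mu(B(x,r))\lesssim r^s$ with $s=\log\rho(\mathbf M)/\log\rho_{\min}(\mathcal D)$ reduces to a bounded-overlap statement: a ball of radius $r$ meets only boundedly many $n$-cylinders of comparable size, which is where primitivity of the matrices in $\mathcal D$ is used to keep the cylinder diameters comparable across colours (no cylinder is asymptotically much smaller than another). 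Then $\dim_H(\Xi)\ge s$, and since always $\dim_H\le\dim_B$, all three quantities coincide.

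The main obstacle I expect is the sharp two-sided diameter estimate, specifically the lower bound $\Delta(\Xi^n)\gtrsim\rho_{\min}(\mathcal D)^n$ and the accompanying bounded-overlap/comparability of cylinder diameters: one must rule out ``shortcuts'' created when substituted copies of rule graphs are glued at the shared nodes $A,B$, i.e. show that a geodesic in $\Xi^n$ cannot do dramatically better than traversing a sequence of shortest $A$-$B$ paths governed by some $\mathbf D\in\mathcal D$. This is precisely the combinatorial matrix problem of Subsection~\ref{subSection: A combinatorial matrix problem} (why the relevant growth rate over all path-choices is the \emph{minimal} spectral radius $\rho_{\min}(\mathcal D)$ rather than something smaller), together with the condition $\dist_{R_i}(A,B)\ge 2$ which prevents degenerate substitutions; once that is in hand, the Minkowski and Hausdorff computations are routine applications of Perron--Frobenius and the mass-distribution principle.
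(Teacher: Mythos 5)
Your outline follows essentially the same architecture as the paper: arc/node counts grow like $\rho(\mathbf{M})^n$ by Perron--Frobenius (Lemmas~\ref{lem:Arc Growth}--\ref{lem:Node Growth}), the diameter grows like $\rho_{\min}(\mathcal{D})^n$, and the scaling relation~(\ref{eq:ellL}) then yields the box dimension, with a mass-distribution-type argument giving the matching Hausdorff lower bound (the paper does this last step by directly bounding the number of vertices a small-diameter cover element can contain, rather than by constructing an explicit Frostman measure, but that is a stylistic difference). The problem is that what you label as ``the main obstacle'' is precisely the mathematical content of the theorem, and your proposal names it without supplying any argument. The inequality $\rho\bigl(\mathbf{D}_1\cdots\mathbf{D}_n\bigr)\geq\rho_{\min}(\mathcal{D})^n$ is \emph{false} for an arbitrary finite family of primitive matrices (the paper's remark after Theorem~\ref{thm:Spectral Combinatorial Matrix} gives a two-matrix counterexample); it holds here only because $\mathcal{D}$ is closed under exchanging rows drawn from the sets $\mathcal{V}_i$, which is exploited in Theorem~\ref{thm:10.2} (replace one row of a minimizer $\mathbf{D}_{\min}$ by any $\mathbf{d}_i\in\mathcal{V}_i$ and apply the strict Collatz--Wielandt bound of Lemma~\ref{lem:Strict Collatz} to get $\mathbf{d}_i\mathbf{v}_{\min}\geq\rho(\mathbf{D}_{\min})[\mathbf{v}_{\min}]_i$) and then iterated in Theorem~\ref{thm:Spectral Combinatorial Matrix}. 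Likewise, the reduction of geodesic growth to this matrix problem is not automatic: one needs the uniform-path lemma (Lemma~\ref{lem:Uniform}) to show some shortest $A$--$B$ path is realized by colourwise-consistent substitutions, giving the exact identity of Lemma~\ref{lem:10.7}, $|\Gamma^n|=\min\Vert\boldsymbol{\chi}(\Xi^0)\prod\mathbf{D}\Vert_1$, and then a separate telescoping argument (Lemma~\ref{lem:Diameter}, $\Delta(\Xi^n)\leq\Delta_{\Xi^n}(V(\Xi^0))+2R\sum_j|\Gamma^j|$) to pass from the $A$--$B$ distance to the full diameter; your sketch conflates these two steps.

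A second, smaller gap is in the Hausdorff part: you justify the bounded-overlap/Frostman check by saying primitivity of the matrices in $\mathcal{D}$ keeps cylinder diameters comparable across colours. Comparability of cylinder diameters is indeed what primitivity buys, but it is not the real difficulty there. The difficulty is that a ball of radius comparable to the cylinder size can meet very many $n$-cylinders, because all cylinders descending from arcs incident to a common node are glued at that node, and node degrees in $\Xi^n$ are in general unbounded as $n$ grows (whenever some rule graph has $\deg_{R_i}(A)\geq 2$ or $\deg_{R_i}(B)\geq 2$). So the overlap bound you assert does not follow from the comparability statement you cite, and your measure estimate $\mu(B(x,r))\preceq r^s$ needs a genuine argument controlling how much mass accumulates near such high-degree nodes; the paper's corresponding step is the containment $U_i\subset\tau_i\subset\Xi^{k+c}$ in Subsection~\ref{Subsection:Proof of Hausdorff}, which plays exactly this role. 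As it stands, your proposal is a correct road map that mirrors the paper, but the three load-bearing steps --- Theorem~\ref{thm:Spectral Combinatorial Matrix}, the uniform-path/diameter estimates of Subsection~\ref{subSection: Estimation of distance}, and the overlap control in the Hausdorff lower bound --- are left unproved, so it does not yet constitute a proof.
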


\begin{remark}
    For any network with a given algebraic number fractal dimension, we can always directly construct a deterministic iterated graph system that possesses the same dimension. 
\end{remark}

\begin{figure}[ht]
\centering
\begin{tikzpicture}[scale=1,black,thick]
  \begin{scope}[scale=1,shift={(1,2)}]
  \draw (-1.5,-1) node {\normalsize \textbf{Rule:}};
    \draw[draw=myaqua!90!black,>=stealth,->] (0,0) -- (2,0) node[midway,above]{Colour 1};
    \fill (0,0) circle (1.5pt) (2,0) circle (1.5pt);
    \draw (0,0) node[below] {\footnotesize $A$};
    \draw (2,0) node[below] {\footnotesize $B$};
    \draw[very thick,>=stealth,->] (2.5,0) --++ (1,0);
    \Rrulea{5,0}{.67}{0}{\draw (-162:1.577) node[below] {\footnotesize $A$};
                         \draw (-18: 1.577) node[below] {\footnotesize $B$};
                         \draw (2.5,0) node {\makebox(0,0){$R_{1}$}};}
  \end{scope}
  \begin{scope}[scale=1,shift={(1,0)}]
    \draw[draw=red,>=stealth,->] (0,0) -- (2,0)node[midway,above]{Colour 2};
    \fill (0,0) circle (1.5pt) (2,0) circle (1.5pt);
    \draw (0,0) node[below] {\footnotesize $A$};
    \draw (2,0) node[below] {\footnotesize $B$};
    \draw[very thick,>=stealth,->] (2.5,0) --++ (1,0);
    \Rruleb{5,0}{.67}{0}{\draw (-1.5,0) node[below] {\footnotesize $A$};
                         \draw ( 1.5,0) node[below] {\footnotesize $B$};
                         \draw ( 2.5,0) node {\makebox(0,0){$R_{2}$}};}
  \draw[dashed] (-1.5,-.7) -- (10,-.7);
  \end{scope}
  \begin{scope}[scale=1,shift={(0,-2.5)}]
    \draw[draw=myaqua!90!black,>=stealth,->] (0,0) -- (2,0) node[midway,below=1.2cm]{$\Xi^0$};
    \fill (0,0) circle (1.5pt) (2,0) circle (1.5pt);
    \draw (0,0) node[below] {\footnotesize $A$};
    \draw (2,0) node[below] {\footnotesize $B$};
    \draw (-.75,0) node {};
    \draw[very thick,>=stealth,->] (2.5,0) --++ (1,0);
    \Rrulea{5,0}{.67}{0}{};
    \draw (4,-.4) node[below] {\footnotesize $A$};
    \draw (6,-.4) node[below] {\footnotesize $B$};
    \draw (5,-1.5) node {$\Xi^1$};
    \draw[very thick,>=stealth,->] (6.5,0) --++ (1,0);
    \begin{scope}[shift={(9.65,0)},scale=.5]
      \Rrulea{150:3.085}{1}{ 60}{}
      \Rrulea{ 30:3.085}{1}{300}{}
      \Rruleb{  0,2.6  }{1}{  0}{}
      \Rruleb{ 1.5,-1.5}{1.41}{45}{}
      \Rruleb{-1.5,-1.5}{1.41}{315}{}
      \fill (-3,0) circle (2.25pt) (3,0) circle (2.25pt);
      \draw ( 90:1.2) node(){{}};
    \end{scope}
    \draw (8,-1.5) node {$\Xi^2$};
    \draw[very thick,>=stealth,->] (11.5,0) --++ (0.5,0);
    \draw (12.5,0) node {$\dots$};
  \end{scope}
\end{tikzpicture}
\caption{An example of deterministic iterated graph systems}
\label{fig:example1}
\end{figure}
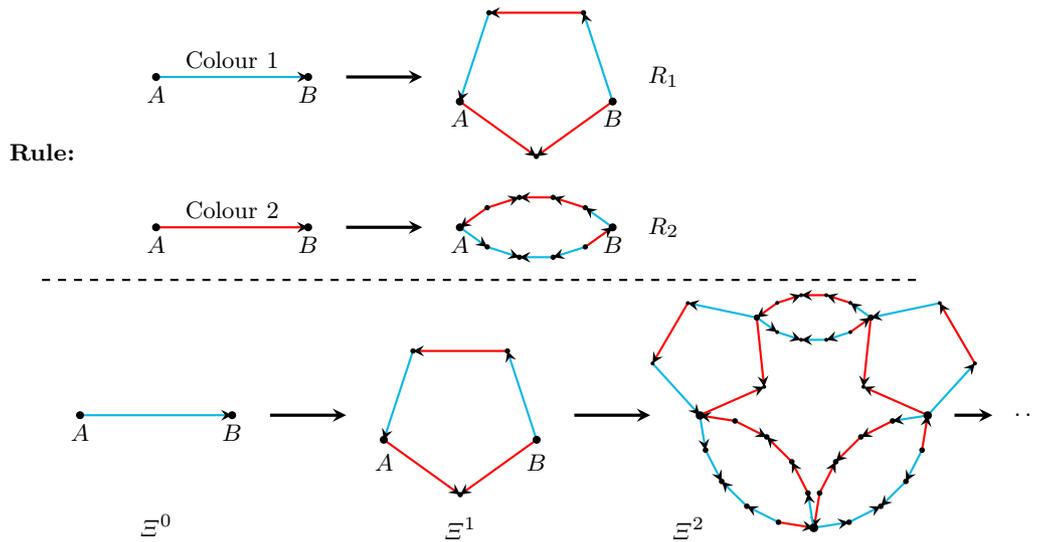

\begin{example}\label{exa:D}{\rm
Recall the iterated graph system of Fig.~\ref{fig:example1}.
For Definition~\ref{def:arc matrix}, we have
\[
\mathbf{M} = 
\begin{pmatrix}   
2 & 3 \\
5 & 5
\end{pmatrix}
\]
with $\rho(\mathbf{M}) \approx 7.\,6533$.

For Definition~\ref{def:Path Matrix D}, there are $|\mathcal{P}_{1}| = 2$ paths in $R_{1}$, say $P_{1}$ and $P_{1}'$,
where path $P_{1}$ contains 2 blue edges and 1 red edge;
that is, $\boldsymbol{\chi}(P_{1}) = (2,1)$,
and where path $P_{1}'$ contains 0 blue and 2 red edges; that is, $\boldsymbol{\chi}(P_{1}') = (0,2)$.
In $R_{2}$, there is also two paths $P_{2}$ and $P_{2}'$. $P_{2}$ contains 1 blue edge and 4 red edges;
that is, $\boldsymbol{\chi}(P_{2}) = (1,4)$.
Similarly, $\boldsymbol{\chi}(P_{2}') = (4,1)$.
Hence,
\[
\mathcal{V}_1 
      = \biggl\{ \Bigl(2,1\Bigr)\,,\; \Bigl(0,2\Bigr)\biggr\} \text{ and }
\mathcal{V}_2
      = \biggl\{ \Bigl(1,4\Bigr)\,,\; \Bigl(4,1\Bigr)\biggr\}\,. 
\]
As a result, $\mathcal{D}$ contains $|\mathcal{V}_1||\mathcal{V}_2| = 2 \times 2 = 4$ matrices, which are
\[
\mathcal{D}=
\biggl\{
\begin{pmatrix}   
2 & 1 \\
1 & 4
\end{pmatrix}
,
\begin{pmatrix}   
2 & 1 \\
4 & 1
\end{pmatrix}
,
\begin{pmatrix}   
0 & 2 \\
1 & 4
\end{pmatrix}
,
\begin{pmatrix}   
0 & 2 \\
4 & 1
\end{pmatrix}
\biggr\}\,.
\]
The smallest spectral radius of these matrices is
$\rho_{\min}(\mathcal{D}) = \frac{1}{2}\sqrt{33}+\frac{1}{2} \approx 3.3723$
and a matrix in $\mathcal{D}$ with this spectral radius is
\[
  \mathcal{D}_{\min} = \Biggl\{
  \begin{pmatrix}
  0 & 2 \\
  4 & 1\end{pmatrix}\Biggr\}
        \,.
\]}
As a conclusion, by Theorem~\ref{sec:deterministic iterated graph systems} we obtain that for the demonstrated example
\[
\dim_B(\Xi)=\dim_H(\Xi)=
\frac{\log 7.6533}{\log 3.3723}
= 1. 6742\,.
\]

Modelling the network by the volume-greedy ball-covering algorithm (VGBC) as given by Wang et al.~\cite{WaWaXiChWaBaYuZh17},
we obtain the simulated values given in Fig.~\ref{fig:one simulation}.
It shows a simulation of Minkowski dimension of $\Xi^5$, providing an estimated Minkowski dimension~$1.6219$.

\begin{figure}
\centering
\pgfplotstableread{
X1	Y1
10	0.025797643
11	0.015665421
12	0.015665421
13	0.020048865
14	0.014156367
15	0.014156367
16	0.012791032
17	0.008407588
18	0.008407588
19	0.008407588
20	0.007042254
21	0.007042254
22	0.005533199
23	0.005533199
24	0.005533199
25	0.004024145
26	0.004024145
27	0.004024145
28	0.004024145
29	0.004024145
30	0.003592986
31	0.003592986
32	0.003592986
33	0.003592986
34	0.003592986
35	0.002443231
36	0.002443231
37	0.002443231
38	0.002299511
39	0.002299511
40	0.002299511
41	0.002299511
42	0.002083932
43	0.002083932
44	0.002083932
45	0.002083932
46	0.002083932
47	0.002083932
48	0.001868353
49	0.002083932
50	0.002083932
}\mytable
\begin{tikzpicture}[scale=0.8]
    \begin{axis}[
        xmode=log,
        ymode=log,
        xmin = 8, xmax = 100,
        ymin = 0.001, ymax = 0.1,
        width = 0.9\textwidth,
        height = 0.675\textwidth,
        grid = both,
        minor tick num = 1,
        major grid style = {lightgray},
        minor grid style = {lightgray!25},
        xlabel = {$L$},
        ylabel = {$\frac{N_L(\Xi^n)}{|V(\Xi^n)|}$},
        legend cell align = {left},
        legend pos = north east
        ]
        \addlegendentry{Estimated Minkowski dimension is $1.6219$};
        \addplot[color=purple,mark=square,only marks] table[x = X1, y = Y1] {\mytable};
        \addplot[thick, black] table[ x = X1, y = {create col/linear regression={y=Y1}} ] {\mytable};
    \end{axis}
\end{tikzpicture}
\caption{Test of fractality when $n=5$}
\label{fig:one simulation}
\end{figure}
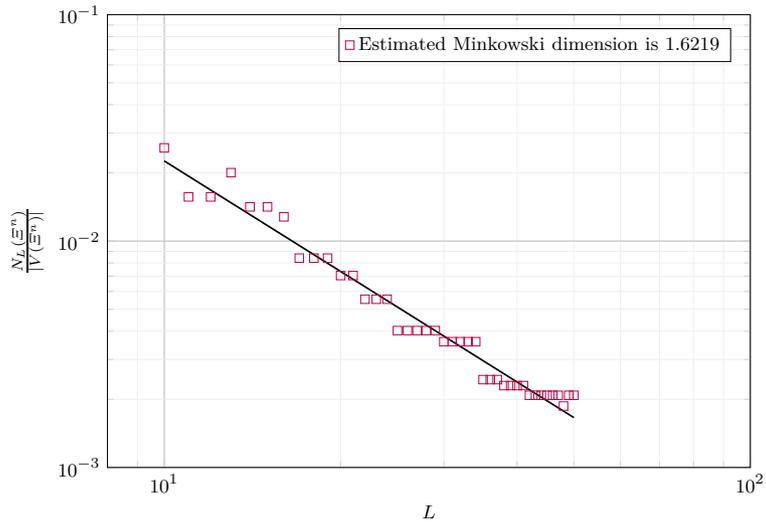

\end{example}

\smallskip
We will later prove Theorem~\ref{thm:Deterministic Substitution} through the following two theorems.
\begin{theorem}\label{thm:part I}
We will prove in Subsection~\ref{Subsection: Proof of fractality} that
\[
\dim_B(\Xi)=\frac{\log \rho(\mathbf{M})}{\log \rho_{\min}(\mathcal{D})} \,.
\]
\end{theorem}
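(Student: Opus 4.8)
The plan is to compare the two competing growth rates inside the limit space $\hat\Xi$: the number of vertices of the approximating graph $\Xi^n$, which controls how many boxes are needed, and the metric scale at which a vertex of $\Xi^n$ — together with the sub-copy of $\Xi$ hanging off each incident arc — sits inside $\hat\Xi$, which controls the relevant box size $\ell$. First I would record the arc-count recursion: regarding $\boldsymbol\chi$ as a row vector, substituting every $i$-arc by $R_i$ gives $\boldsymbol\chi(\Xi^{n+1}) = \boldsymbol\chi(\Xi^n)\mathbf{M}$, hence $\boldsymbol\chi(\Xi^n) = \boldsymbol\chi(\Xi^0)\mathbf{M}^n$. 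Since $\mathbf{M}$ is primitive, Perron--Frobenius yields $|E(\Xi^n)| = \|\boldsymbol\chi(\Xi^n)\|_1 \asymp \rho(\mathbf{M})^n$, and counting the vertices created at each substitution (each $R_i$ has $\dist_{R_i}(A,B)\ge 2$, so at least one internal node) gives $|V(\Xi^n)| \asymp |E(\Xi^n)| \asymp \rho(\mathbf{M})^n$ with constants independent of $n$.

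Next I would control distances through the \emph{distal vector} $\boldsymbol\delta^n$, $[\boldsymbol\delta^n]_i := \dist_{\Xi_i^n}(A,B)$, where $\Xi_i^n$ is the $n$-th iterate of a single $i$-coloured arc. The structural point is that a path may enter a substituted sub-copy only at its two special nodes, so a geodesic from $A$ to $B$ in $\Xi_i^{n+1}$ is obtained by picking a simple $A$--$B$ path $P$ in $R_i$ and routing through a geodesic copy of $\Xi_j^n$ for each $j$-arc of $P$; this gives the recursion $[\boldsymbol\delta^{n+1}]_i = \min_{\mathbf d\in\mathcal V_i}\mathbf d\cdot\boldsymbol\delta^n$, i.e.\ $\boldsymbol\delta^{n+1}$ is the component-wise minimum over $\mathbf D\in\mathcal D$ of $\mathbf D\boldsymbol\delta^n$. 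Monotonicity of nonnegative matrices and iteration give $\boldsymbol\delta^n \le \mathbf D^n\boldsymbol\delta^0 \asymp \rho(\mathbf D)^n$ for each fixed $\mathbf D\in\mathcal D$, hence $[\boldsymbol\delta^n]_i \lesssim \rho_{\min}(\mathcal D)^n$; the reverse inequality $[\boldsymbol\delta^n]_i \gtrsim \rho_{\min}(\mathcal D)^n$ is exactly the combinatorial matrix problem of Subsection~\ref{subSection: A combinatorial matrix problem}. Since $\dist_{R_i}(A,B)\ge 2$ every $\mathbf D\in\mathcal D$ has all row sums $\ge 2$, so $\rho_{\min}(\mathcal D)\ge 2$, and feeding $\boldsymbol\delta^n\asymp\rho_{\min}(\mathcal D)^n$ into the crude estimate $\Delta(\Xi^{n})\le 2\max_i\Delta(\Xi_i^{n-1})+C\max_i[\boldsymbol\delta^{n-1}]_i$ promotes this to $\Delta(\Xi^n)\asymp\rho_{\min}(\mathcal D)^n$, the content of Subsection~\ref{subSection: Estimation of distance}.

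Then I would transfer these estimates to the limit metric space $\hat\Xi$ (whose existence is Subsection~\ref{subsection: Existence}). Each $i$-arc $e$ of $\Xi^n$ carries a copy of $\hat\Xi$ whose diameter, after the $1/\Delta(\Xi^{n+k})$ rescaling and $k\to\infty$, is $\asymp \rho_{\min}(\mathcal D)^{-n}$; conversely two distinct vertices of $\Xi^n$ are $\gtrsim\rho_{\min}(\mathcal D)^{-n}$-separated in $\hat\Xi$, since leaving such a vertex inside $\Xi^{n+k}$ forces a full traversal of some incident sub-copy, costing $\gtrsim\rho_{\min}(\mathcal D)^k$, which survives division by $\Delta(\Xi^{n+k})\asymp\rho_{\min}(\mathcal D)^{n+k}$. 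Taking $\ell=\rho_{\min}(\mathcal D)^{-n}$ and covering $\hat\Xi$ by the sub-copies at level $n+O(1)$ gives $N_\ell(\hat\Xi)\lesssim\rho(\mathbf M)^n$, while the vertex set of level $n-O(1)$ gives $N_\ell(\hat\Xi)\gtrsim\rho(\mathbf M)^n$; hence $N_{\rho_{\min}(\mathcal D)^{-n}}(\hat\Xi)\asymp\rho(\mathbf M)^n$. Because $\ell\mapsto N_\ell(\hat\Xi)$ is monotone and these sampled values have bounded consecutive ratios, squeezing over $\rho_{\min}(\mathcal D)^{-(n+1)}\le\ell\le\rho_{\min}(\mathcal D)^{-n}$ shows the limit in Definition~\ref{def:Fractality for Graphs} exists and equals $\log\rho(\mathbf M)/\log\rho_{\min}(\mathcal D)$.

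I expect the main obstacle to be the lower bound $[\boldsymbol\delta^n]_i\gtrsim\rho_{\min}(\mathcal D)^n$: the distal does not evolve by a single linear map but by a component-wise minimum over a whole family of primitive nonnegative matrices, so one must rule out that a cleverly time-varying schedule of matrices from $\mathcal D$ yields geodesics asymptotically shorter than those of the best fixed matrix. A secondary technical point is making $\hat\Xi$ rigorous in the first place — verifying that the rescaled metrics on the nested vertex sets $V(\Xi^0)\subset V(\Xi^1)\subset\cdots$ form a Cauchy family, so that distances in $\hat\Xi$, and hence the separation and covering estimates above, are well defined — which again leans on the uniform two-sided diameter estimate.
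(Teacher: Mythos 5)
Your proposal follows essentially the same route as the paper: arc and node counts grow like $\rho(\mathbf{M})^n$, the distal and diameter grow like $\rho_{\min}(\mathcal{D})^n$, and the dimension is read off by covering/packing at the matched scale --- this is exactly the paper's chain Lemma~\ref{lem:Arc Growth}, Lemma~\ref{lem:Node Growth}, Theorem~\ref{thm:minimal Distance}, Lemma~\ref{lem:Diameter} and the box count of Subsection~\ref{Subsection: Proof of fractality}, and your exact component-wise-minimum recursion for the distal is a repackaging of the paper's uniform-path Lemmas~\ref{lem:Uniform} and~\ref{lem:10.7}. The one step you leave open --- ruling out that a time-varying schedule of matrices from $\mathcal{D}$ produces geodesics asymptotically shorter than the best fixed matrix --- is precisely what the paper establishes in Theorems~\ref{thm:10.2} and~\ref{thm:Spectral Combinatorial Matrix}: for the Perron vector $\mathbf{v}_{\min}$ of $\mathbf{D}_{\min}$, every admissible row $\mathbf{d}_i\in\mathcal{V}_i$ satisfies $\mathbf{d}_i\mathbf{v}_{\min}\geq\rho_{\min}(\mathcal{D})[\mathbf{v}_{\min}]_i$ (otherwise replacing that row of $\mathbf{D}_{\min}$ by $\mathbf{d}_i$ and applying the strict Collatz--Wielandt bound of Lemma~\ref{lem:Strict Collatz} would produce a matrix in $\mathcal{D}$ with smaller spectral radius), whence $\mathbf{D}\mathbf{v}_{\min}\geq\rho_{\min}(\mathcal{D})\mathbf{v}_{\min}$ for every $\mathbf{D}\in\mathcal{D}$ and $\rho\bigl(\prod_{\ell=1}^{n}\mathbf{D}_\ell\bigr)\geq\rho_{\min}(\mathcal{D})^{n}$ for arbitrary products. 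Since you invoke that combinatorial-matrix subsection rather than reprove it (and likewise defer the well-definedness of $\hat{\Xi}$, handled in Subsection~\ref{subsection: Existence}), your argument is correct and complete modulo exactly the ingredients the paper supplies there, with no substantive divergence in method.
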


\begin{theorem}\label{thm:Part II}
We will prove in Subsection~\ref{Subsection:Proof of Hausdorff} that
\[
\dim_H(\Xi)=\frac{\log \rho(\mathbf{M})}{\log \rho_{\min}(\mathcal{D})} \,.
\]
\end{theorem}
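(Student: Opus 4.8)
The plan is to prove Theorem~\ref{thm:Deterministic Substitution} exactly as advertised, via Theorem~\ref{thm:part I} (the box dimension) and Theorem~\ref{thm:Part II} (the Hausdorff dimension); the content of both reduces to two asymptotic estimates for the $n$-th iterate $\Xi^{n}$: a \emph{size} estimate $|V(\Xi^{n})|\asymp\rho(\mathbf{M})^{n}$ and a \emph{diameter} estimate $\Delta(\Xi^{n})\asymp\rho_{\min}(\mathcal{D})^{n}$, where throughout $\asymp$ hides multiplicative constants depending only on the system. The size estimate is the easy half: the bookkeeping identity $\boldsymbol{\chi}(\Xi^{n+1})=\boldsymbol{\chi}(\Xi^{n})\mathbf{M}$ gives $\boldsymbol{\chi}(\Xi^{n})=\boldsymbol{\chi}(\Xi^{0})\mathbf{M}^{n}$, and since substituting a $k$-coloured arc creates a fixed number $a_{k}\ge 1$ of interior vertices, $|V(\Xi^{n})|=|V(\Xi^{0})|+\sum_{m<n}\boldsymbol{\chi}(\Xi^{0})\mathbf{M}^{m}\mathbf{a}$. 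Primitivity of $\mathbf{M}$ and Perron--Frobenius give $\mathbf{M}^{m}\asymp\rho(\mathbf{M})^{m}$ entrywise, while the row-sum bound together with $\dist_{R_{i}}(A,B)\ge 2$ forces $\rho(\mathbf{M})\ge 2$, so the geometric sum is dominated by its last term and $|V(\Xi^{n})|\asymp|E(\Xi^{n})|\asymp\rho(\mathbf{M})^{n}$.

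The diameter estimate is the combinatorial core of Subsection~\ref{subSection: A combinatorial matrix problem}. Let $\mathbf{h}^{(n)}\in\mathbb{R}_{+}^{\lambda}$ have $k$-th entry equal to the $A$--$B$ distance in the $n$-fold substitute of a single $k$-coloured arc. Expanding one layer of substitution, a shortest $A$--$B$ route picks a simple $A$--$B$ path $P_{i}$ in $R_{i}$ and crosses each of its arcs optimally, which yields $\mathbf{h}^{(n+1)}=F(\mathbf{h}^{(n)})$ with $F_{i}(\mathbf{x})=\min_{\mathbf{d}\in\mathcal{V}_{i}}\langle\mathbf{d},\mathbf{x}\rangle$ and $\mathbf{h}^{(0)}=\mathbf{1}$; note $F$ is monotone, positively homogeneous and concave. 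Picking any $\mathbf{D}^{*}\in\mathcal{D}_{\min}$ gives $F(\mathbf{x})\le\mathbf{D}^{*}\mathbf{x}$ for all $\mathbf{x}\ge 0$, whence $\mathbf{h}^{(n)}\le(\mathbf{D}^{*})^{n}\mathbf{1}\asymp\rho_{\min}(\mathcal{D})^{n}$. For the matching lower bound, let $\mathbf{v}^{*}>0$ be the Perron eigenvector of $\mathbf{D}^{*}$; I claim $F(\mathbf{v}^{*})=\rho_{\min}(\mathcal{D})\mathbf{v}^{*}$. Indeed, if some $\mathbf{d}\in\mathcal{V}_{i}$ had $\langle\mathbf{d},\mathbf{v}^{*}\rangle<\rho_{\min}(\mathcal{D})\,[\mathbf{v}^{*}]_{i}$, then replacing the $i$-th row of $\mathbf{D}^{*}$ by $\mathbf{d}$ produces a matrix $\mathbf{D}'\in\mathcal{D}$ with $\mathbf{D}'\mathbf{v}^{*}\le\rho_{\min}(\mathcal{D})\mathbf{v}^{*}$ and strict inequality in coordinate $i$; pairing with the positive left Perron eigenvector of the primitive matrix $\mathbf{D}'$ forces $\rho(\mathbf{D}')<\rho_{\min}(\mathcal{D})$, contradicting $\mathbf{D}'\in\mathcal{D}$. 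Given $F(\mathbf{v}^{*})=\rho_{\min}(\mathcal{D})\mathbf{v}^{*}$, monotonicity and homogeneity of $F$ together with $c\,\mathbf{v}^{*}\le\mathbf{1}\le C\,\mathbf{v}^{*}$ yield $\mathbf{h}^{(n)}\asymp\rho_{\min}(\mathcal{D})^{n}\mathbf{v}^{*}$. A telescoping argument over the bounded-length skeleton of $\Xi^{1}$ (any two vertices of $\Xi^{n}$ are joined through boundedly many level-$1$ arcs, each crossed by a geodesic of length $\le\max_{k}[\mathbf{h}^{(n-1)}]_{k}$) then upgrades this to $\Delta(\Xi^{n})\asymp\rho_{\min}(\mathcal{D})^{n}$.

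With both estimates in hand, Theorem~\ref{thm:part I} follows from Definition~\ref{def:Fractality for Graphs}: at scale $\ell\asymp\Delta(\Xi^{n})^{-1}$ one has $N_{\ell}(\hat{\Xi})\asymp|V(\Xi^{n})|$ --- the upper bound by covering with the vertex sets of the $|E(\Xi^{n})|$ level-$n$ pieces (each of diameter $\lesssim\ell$ by the diameter estimate), the lower bound by selecting one representative vertex per piece and using that distinct pieces are $\gtrsim\ell$ apart --- and since $\Delta(\Xi^{n+1})/\Delta(\Xi^{n})$ is bounded, intermediate scales interpolate, giving
\[
\dim_{B}(\Xi)=\lim_{n\to\infty}\frac{\log|V(\Xi^{n})|}{\log\Delta(\Xi^{n})}=\frac{\log\rho(\mathbf{M})}{\log\rho_{\min}(\mathcal{D})}\,.
\]
For Theorem~\ref{thm:Part II}, $\dim_{H}(\Xi)\le\dim_{B}(\Xi)$ is automatic, so only the lower bound $\dim_{H}(\Xi)\ge s:=\log\rho(\mathbf{M})/\log\rho_{\min}(\mathcal{D})$ remains. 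I would place on $\hat{\Xi}$ (whose existence is given in Subsection~\ref{subsection: Existence}) the natural mass distribution $\nu$ that assigns each level-$n$ piece a mass proportional to the left-Perron-eigenvector weight of $\mathbf{M}$ attached to its colour, so that $\nu$ is consistent under refinement and a level-$n$ piece carries mass $\asymp\rho(\mathbf{M})^{-n}\asymp(\rho_{\min}(\mathcal{D})^{-n})^{s}$; combining the diameter estimate with the distal estimates of Subsection~\ref{subSection: Estimation of distance} (non-adjacent level-$n$ pieces lie $\gtrsim\rho_{\min}(\mathcal{D})^{-n}$ apart), one verifies the Frostman bound $\nu(B(x,r))\le C r^{s}$, and the mass distribution principle delivers $\dim_{H}(\Xi)\ge s$.

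I expect two points to carry the real difficulty. The first is the lower bound in the diameter estimate, i.e.\ that the nonlinear iteration $\mathbf{h}^{(n+1)}=F(\mathbf{h}^{(n)})$ cannot decay below the best fixed choice $\mathbf{D}^{*}$: this is precisely the identification of $\rho_{\min}(\mathcal{D})$ with the lower growth rate of $F$, and the single-row-swap argument above genuinely uses that \emph{every} matrix of $\mathcal{D}$ --- not just $\mathbf{D}^{*}$ --- is primitive. The second is the Frostman estimate $\nu(B(x,r))\le C r^{s}$ when $x$ is a high-degree ``hub'' vertex shared by many level-$n$ pieces: such a ball meets an unbounded number of pieces, but only inside a shallow sub-piece of each, and one must check that the eigenvector-weighted masses still sum to $O(r^{s})$ --- this is exactly the quantitative content that the distal estimates of Subsection~\ref{subSection: Estimation of distance} are designed to supply.
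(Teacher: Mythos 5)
Your overall scaffolding for Theorem~\ref{thm:Part II} matches the paper's: you take $\dim_H(\Xi)\le\dim_B(\Xi)$ for free and aim only at the lower bound, using the two estimates $|V(\Xi^n)|\asymp\rho(\mathbf{M})^n$ and $\Delta(\Xi^n)\asymp\rho_{\min}(\mathcal{D})^n$. But your route to the lower bound is genuinely different from the paper's. You propose a self-similar measure $\nu$ on $\hat{\Xi}$ (pieces of level $n$ getting mass $\asymp\rho(\mathbf{M})^{-n}$, weighted by a Perron eigenvector --- note that consistency under refinement forces the \emph{right} eigenvector of $\mathbf{M}$ under the paper's convention, not the left) and then the mass distribution principle via a Frostman bound $\nu(B(x,r))\le Cr^s$. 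The paper instead takes an arbitrary cover $\{U_i\}$ of $E(\hat{\Xi}^n)$, matches each $U_i$ to a level $k$ through $\max\{|\Gamma^k|\}\asymp\Delta(U_i)\Delta(\Xi^n)$, bounds the number of vertices each $U_i$ can contain by embedding the corresponding ball in a bounded-level copy $\Xi^{k+c}$, and sums to get $\mathcal{HE}^s(\hat{\Xi})\succeq 1$; it also first reduces $\dim_H$ to the edge Hausdorff dimension of its bespoke graph Hausdorff dimension (Remark~\ref{remark:Hausdorff}), a definitional point your sketch does not engage with, though your argument would adapt.

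The genuine gap is the step you yourself flag and do not carry out: the Frostman bound, which as literally stated is \emph{false} for your measure. Take $x$ to be the original node $A$ and $r\asymp\rho_{\min}(\mathcal{D})^{-k}$. Every level-$k$ piece incident to $A$ has diameter $\preceq\rho_{\min}(\mathcal{D})^{-k}$ and contains $A$, so it lies entirely in $B(A,Cr)$; there are $\deg_{\Xi^k}(A)$ such pieces, and in the paper's own Example~\ref{exa:D} both rule graphs have $\deg(A)=\deg(B)=2$, so $\deg_{\Xi^k}(A)=2^k$. Each piece carries mass $\asymp\rho(\mathbf{M})^{-k}$, hence $\nu(B(A,Cr))\succeq 2^k\rho(\mathbf{M})^{-k}$, while $r^s\asymp\rho(\mathbf{M})^{-k}$; the ratio blows up. So the uniform inequality $\nu(B(x,r))\le Cr^s$ cannot hold near such hub vertices, and since high-degree vertices are dense in $\hat{\Xi}$, you cannot simply excise them: you would need the local (upper-density, $\nu$-a.e.) form of the mass distribution principle plus a quantitative estimate --- e.g.\ a Borel--Cantelli argument over the events that a $\nu$-typical point comes within $\rho_{\min}(\mathcal{D})^{-k}$ of a vertex born at level $k-j$, whose total measure decays like $(2/\rho(\mathbf{M}))^{j}$ --- none of which is in your sketch, and which the diameter and distal estimates of Subsection~\ref{subSection: Estimation of distance} alone do not supply. (The paper sidesteps measures entirely at this point, at the price of its own rather terse containment claim $U_i\subset\tau_i\subset\Xi^{k+c}$, which is delicate at exactly these hub vertices.) As it stands, therefore, your proposal establishes the upper bound and the correct target, but the lower-bound argument is incomplete at its decisive step.
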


\subsection{Growth rates of arcs and nodes}
This section introduces several necessary tools and properties for the proof of fractality in Section~\ref{Section: Proof of fractality}.
\textit{We will omit the proofs of such results that may be found in~\cite{LiYaWa19,Li2023}.}


\begin{lemma}\label{lem:Norm Range}
Let $\mathbf{X}$ be a primitive non-negative $k \times k$ matrix with spectral radius $\rho(\mathbf{X})$.
Then, for any positive vector $\mathbf{u}\in\mathbb{R}_+^k$ and for any $n\in \mathbb{N}$,
\[
  c^{-1} \rho(\mathbf{X})^n \leq \Vert  \mathbf{u}\mathbf{X}^n\Vert_1 \leq c\rho(\mathbf{X})^n\,,
\]
where $c>1$ is a constant depending on $\mathbf{u}$ and $\mathbf{X}$.
\end{lemma}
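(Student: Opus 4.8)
The plan is to exploit the Perron--Frobenius theorem together with a standard sandwich argument on eigenvectors. First I would recall that since $\mathbf{X}$ is primitive and non-negative, the Perron--Frobenius theorem guarantees a simple eigenvalue $\rho=\rho(\mathbf{X})>0$ of maximal modulus, with strictly positive left and right eigenvectors; call them $\mathbf{w}$ (a positive row vector with $\mathbf{w}\mathbf{X}=\rho\mathbf{w}$) and $\mathbf{v}$ (a positive column vector with $\mathbf{X}\mathbf{v}=\rho\mathbf{v}$), normalised so that $\mathbf{w}\mathbf{v}=1$. The key quantitative input is that for any positive vector $\mathbf{u}$ there exist finite constants $0<\alpha\le\beta$ with $\alpha\,\mathbf{w}\le\mathbf{u}\le\beta\,\mathbf{w}$ entrywise; this is immediate because both $\mathbf{u}$ and $\mathbf{w}$ have all entries strictly positive and there are only finitely many coordinates, so one takes $\alpha=\min_i [\mathbf{u}]_i/[\mathbf{w}]_i$ and $\beta=\max_i [\mathbf{u}]_i/[\mathbf{w}]_i$.

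Next I would use monotonicity of the action of the non-negative matrix $\mathbf{X}^n$ on non-negative row vectors: from $\alpha\,\mathbf{w}\le\mathbf{u}\le\beta\,\mathbf{w}$ we get $\alpha\,\mathbf{w}\mathbf{X}^n\le\mathbf{u}\mathbf{X}^n\le\beta\,\mathbf{w}\mathbf{X}^n$, and since $\mathbf{w}$ is a left eigenvector this reads $\alpha\rho^n\mathbf{w}\le\mathbf{u}\mathbf{X}^n\le\beta\rho^n\mathbf{w}$ entrywise. Taking $\|\cdot\|_1$ and using that all vectors involved are non-negative (so the $\ell^1$ norm is just the sum of entries and is monotone under entrywise inequality of non-negative vectors), we obtain
\[
\alpha\,\rho^n\,\|\mathbf{w}\|_1 \;\le\; \|\mathbf{u}\mathbf{X}^n\|_1 \;\le\; \beta\,\rho^n\,\|\mathbf{w}\|_1\,.
\]
Setting $c=\max\{\beta\|\mathbf{w}\|_1,\ (\alpha\|\mathbf{w}\|_1)^{-1}\}$ (and enlarging $c$ to exceed $1$ if necessary) gives both desired inequalities $c^{-1}\rho^n\le\|\mathbf{u}\mathbf{X}^n\|_1\le c\rho^n$, with $c$ depending only on $\mathbf{u}$ and $\mathbf{X}$ as claimed.

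The only genuine subtlety — and the step I would be most careful about — is making sure the eigenvector comparison $\alpha\mathbf{w}\le\mathbf{u}\le\beta\mathbf{w}$ really uses strict positivity of $\mathbf{w}$, which in turn relies on primitivity (a merely irreducible non-negative matrix still has a positive Perron eigenvector, so actually irreducibility already suffices for this lemma; primitivity would matter if one wanted the sharper asymptotic $\mathbf{u}\mathbf{X}^n\sim(\mathbf{u}\mathbf{v})\rho^n\mathbf{w}$, which we do not need here). An alternative, slightly slicker route avoiding any case analysis on positivity of $\mathbf{u}$ at each power is to note that $\rho^{-n}\mathbf{X}^n\to\mathbf{v}\mathbf{w}$ as $n\to\infty$ by the Perron projection convergence, so $\rho^{-n}\mathbf{u}\mathbf{X}^n\to(\mathbf{u}\mathbf{v})\mathbf{w}$, a fixed positive vector; hence $\rho^{-n}\|\mathbf{u}\mathbf{X}^n\|_1$ converges to the positive number $(\mathbf{u}\mathbf{v})\|\mathbf{w}\|_1$ and is therefore bounded above and below by positive constants over all $n$, after separately checking the finitely many small $n$. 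I would present the first (elementary, eigenvector-sandwich) argument as the main proof since it is self-contained and gives explicit constants, and perhaps remark on the second as an alternative.
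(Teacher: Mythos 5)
Your proof is correct: the eigenvector-sandwich argument via the strictly positive left Perron--Frobenius eigenvector, combined with monotonicity of $\mathbf{X}^n$ on non-negative row vectors and of the $\ell^1$ norm, gives exactly the stated two-sided bound with an explicit constant $c$, and your side remark that irreducibility already suffices here is also accurate. The paper itself omits the proof of this lemma and defers to the cited references, where the argument is this same standard Perron--Frobenius one, so your write-up matches the intended proof and is in fact more self-contained.
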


\begin{notation}
$\sim$ is defined as asymptotic equivalence.
We write $f(x)  \overset{x \to x_0}{\asymp} g(x)$ if $\lim_{x \to x_0}{f(x)}/{g(x)}=c$ where $c>0$.
We may omit $x \to x_0$ if no ambiguity occurs.
\end{notation}

\begin{lemma}\label{lem:Norm Limit}
Let $\mathbf{X}$ be a primitive non-negative $k \times k$ matrix with spectral radius $\rho(\mathbf{X})$.
Then, for any positive vector $\mathbf{u}\in\mathbb{R}_+^n$,
\[
  \Vert  \mathbf{u}\mathbf{X}^n\Vert_1\overset{t\to \infty}{\asymp} \rho(\mathbf{X})^n\,.
\]
\end{lemma}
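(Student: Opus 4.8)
The plan is to upgrade the two-sided estimate of Lemma~\ref{lem:Norm Range} to a genuine limit by invoking the Perron--Frobenius spectral decomposition of a primitive matrix. Since $\mathbf{X}$ is primitive and non-negative, $\rho:=\rho(\mathbf{X})$ is a simple eigenvalue that strictly dominates the modulus of every other eigenvalue, and it possesses a positive left eigenvector $\mathbf{v}$ and a positive right eigenvector $\mathbf{w}$; normalise them by $\mathbf{v}\mathbf{w}=1$. Setting $\mathbf{P}=\mathbf{w}\mathbf{v}$ (the rank-one spectral projection onto the Perron eigenspace) and $\mathbf{N}=\mathbf{X}-\rho\mathbf{P}$, a short computation using $\mathbf{v}\mathbf{X}=\rho\mathbf{v}$ and $\mathbf{X}\mathbf{w}=\rho\mathbf{w}$ gives $\mathbf{P}^2=\mathbf{P}$, $\mathbf{P}\mathbf{N}=\mathbf{N}\mathbf{P}=0$ and $\rho(\mathbf{N})<\rho$; hence $\mathbf{X}^n=\rho^n\mathbf{P}+\mathbf{N}^n$ with $\Vert\mathbf{N}^n\Vert=o(\rho^n)$.

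First I would multiply on the left by $\mathbf{u}$, obtaining $\mathbf{u}\mathbf{X}^n=\rho^n(\mathbf{u}\mathbf{w})\,\mathbf{v}+\mathbf{u}\mathbf{N}^n$, where the scalar $\mathbf{u}\mathbf{w}$ is strictly positive because both $\mathbf{u}$ and $\mathbf{w}$ are positive vectors. Dividing by $\rho^n$ and sending $n\to\infty$, the error term $\rho^{-n}\mathbf{u}\mathbf{N}^n$ vanishes, so $\rho^{-n}\,\mathbf{u}\mathbf{X}^n\to(\mathbf{u}\mathbf{w})\,\mathbf{v}$ entrywise.

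Then, since the $\ell^1$-norm is continuous and $(\mathbf{u}\mathbf{w})\mathbf{v}$ is a positive vector, we conclude
\[
\frac{\Vert\mathbf{u}\mathbf{X}^n\Vert_1}{\rho(\mathbf{X})^n}\;\longrightarrow\;(\mathbf{u}\mathbf{w})\,\Vert\mathbf{v}\Vert_1\;>\;0\,,
\]
which is precisely $\Vert\mathbf{u}\mathbf{X}^n\Vert_1\asymp\rho(\mathbf{X})^n$ with constant $c=(\mathbf{u}\mathbf{w})\Vert\mathbf{v}\Vert_1$. Lemma~\ref{lem:Norm Range} already pins the ratio between two positive constants, so the only new content here is the \emph{existence} of the limit, which the decomposition above supplies.

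The only ingredient that is not a one-line check is the Perron--Frobenius decomposition $\mathbf{X}^n=\rho^n\mathbf{P}+\mathbf{N}^n$ with $\Vert\mathbf{N}^n\Vert=o(\rho^n)$; this is classical for primitive matrices (it follows from simplicity and strict dominance of the Perron root together with the Jordan form, giving $\Vert\mathbf{N}^n\Vert^{1/n}\to\rho(\mathbf{N})<\rho$), and I would cite it rather than reprove it. Everything else reduces to the positivity facts $\mathbf{u}\mathbf{w}>0$ and $\mathbf{v}>0$ and continuity of the norm, so I do not expect any real obstacle.
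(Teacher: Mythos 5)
Your argument is correct: the identities $\mathbf{P}^2=\mathbf{P}$, $\mathbf{P}\mathbf{N}=\mathbf{N}\mathbf{P}=0$ and $\rho(\mathbf{N})<\rho(\mathbf{X})$ all check out for a primitive matrix, $\mathbf{u}\mathbf{w}>0$ by positivity, and the resulting limit $\Vert\mathbf{u}\mathbf{X}^n\Vert_1/\rho(\mathbf{X})^n\to(\mathbf{u}\mathbf{w})\Vert\mathbf{v}\Vert_1>0$ is exactly what the paper's definition of $\asymp$ requires. The paper itself omits the proof of this lemma, deferring to the cited references, and your Perron--Frobenius spectral decomposition is the standard route those sources take, so there is nothing to reconcile.
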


Let $\mathbf{M}^{(n)} = ({m}_{ij}^{(n)})$ be the $\lambda \times \lambda$ matrix
whose entries ${m}_{ij}^{(n)}$ equal the  number of $j$-coloured arcs
that result from substituting arcs $t$ times, first by substituting an $i$-coloured arc
and then $n-1$ times substituting the subsequently resulting arcs of all colours.

\medskip

\begin{lemma}\label{lem:Arc Transition Matrix}
For each $n\geq 1$, $\mathbf{M}^{(n)} = \mathbf{M}^n$.
\end{lemma}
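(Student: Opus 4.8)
\textbf{Proof proposal for Lemma~\ref{lem:Arc Transition Matrix}.}

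The plan is to prove the identity $\mathbf{M}^{(n)} = \mathbf{M}^n$ by induction on $n$, with the content concentrated entirely in the inductive step, which amounts to a bookkeeping argument about how substitutions compose. First observe that the base case $n=1$ is immediate: $\mathbf{M}^{(1)}$ by definition has $(i,j)$-entry equal to the number of $j$-coloured arcs produced by a single substitution of an $i$-coloured arc, which is exactly $[\boldsymbol{\chi}(R_i)]_j = m_{ij}$, so $\mathbf{M}^{(1)} = \mathbf{M} = \mathbf{M}^1$.

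For the inductive step, assume $\mathbf{M}^{(n-1)} = \mathbf{M}^{n-1}$. The key combinatorial fact is that the $n$-fold substitution process starting from an $i$-coloured arc can be decomposed as: first perform one substitution of the $i$-coloured arc, producing $m_{ik}$ arcs of each colour $k$; then perform $n-1$ further rounds of substitution on each of those arcs independently. Because substitution acts arc-by-arc and the rule graphs $R_k$ are fixed, the number of $j$-coloured arcs obtained after $n-1$ further rounds starting from a single $k$-coloured arc is precisely $m^{(n-1)}_{kj}$, and these counts simply add over all the arcs present after the first round. Hence
\[
m^{(n)}_{ij} = \sum_{k=1}^{\lambda} m_{ik}\, m^{(n-1)}_{kj},
\]
which is exactly the $(i,j)$-entry of $\mathbf{M}\,\mathbf{M}^{(n-1)}$. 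Applying the inductive hypothesis $\mathbf{M}^{(n-1)} = \mathbf{M}^{n-1}$ gives $\mathbf{M}^{(n)} = \mathbf{M}\cdot\mathbf{M}^{n-1} = \mathbf{M}^n$, completing the induction.

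The only genuine subtlety — and the step I would state most carefully — is the claim that the $n$-round substitution count factors as ``one round, then $n-1$ rounds applied independently to each resulting arc.'' This requires noting that distinct arcs in $\Xi^1$ are substituted independently (the rule applied to an arc depends only on that arc's colour and its designated endpoints $A,B$, not on the surrounding graph), so there is no interaction or double-counting between the descendants of different first-round arcs; the total $j$-coloured arc count is genuinely the sum of the contributions of each first-round arc. One may also want to remark that the alternative decomposition ``$n-1$ rounds, then one more round'' gives $\mathbf{M}^{(n-1)}\mathbf{M}$, consistent with the above since matrix multiplication here is associative, though a single decomposition suffices for the proof. All of this is routine once the independence of arc substitutions is made explicit, so I would keep the write-up short.
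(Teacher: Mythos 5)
Your proof is correct and follows essentially the same route as the paper, which simply states ``By induction''; your write-up fills in the standard base case and the one-round-then-$(n-1)$-rounds decomposition giving $m^{(n)}_{ij} = \sum_k m_{ik} m^{(n-1)}_{kj}$, together with the independence-of-arc-substitutions remark that justifies it.
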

\begin{proof}
By induction.
\end{proof}


\begin{lemma}\label{lem:Arc Growth}
$|E(\Xi^n)|\overset{n \to \infty}{\asymp} \rho(\mathbf{M})^n$.
\end{lemma}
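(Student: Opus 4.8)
The statement to prove is Lemma~\ref{lem:Arc Growth}: $|E(\Xi^n)|\overset{n\to\infty}{\asymp}\rho(\mathbf{M})^n$.

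\medskip

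The plan is to express the arc-count vector of $\Xi^n$ as a matrix product and then invoke the Perron--Frobenius estimates already established. First I would observe that $\boldsymbol{\chi}(\Xi^0)$ is a fixed non-negative vector in $\mathbb{R}_+^{\lambda}$ recording how many arcs of each colour appear in the initial graph, and that a single substitution step transforms this vector by right-multiplication by $\mathbf{M}$: indeed, by Definition~\ref{def:arc matrix} the $i$-th row of $\mathbf{M}$ is precisely $\boldsymbol{\chi}(R_i)$, so replacing every $i$-coloured arc by $R_i$ contributes $[\boldsymbol{\chi}(\Xi^0)]_i\,\boldsymbol{\chi}(R_i)$ to the new colour-count vector, and summing over $i$ gives $\boldsymbol{\chi}(\Xi^1)=\boldsymbol{\chi}(\Xi^0)\mathbf{M}$. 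Iterating, or equivalently applying Lemma~\ref{lem:Arc Transition Matrix} which identifies the $n$-step transition matrix $\mathbf{M}^{(n)}$ with $\mathbf{M}^n$, yields $\boldsymbol{\chi}(\Xi^n)=\boldsymbol{\chi}(\Xi^0)\mathbf{M}^n$.

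\medskip

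Next I would note that the total number of arcs is just the $\ell^1$-norm of this vector, $|E(\Xi^n)| = \Vert\boldsymbol{\chi}(\Xi^n)\Vert_1 = \Vert\boldsymbol{\chi}(\Xi^0)\mathbf{M}^n\Vert_1$, since all entries are non-negative. The iterated graph system is primitive by hypothesis, so $\mathbf{M}$ is a primitive non-negative $\lambda\times\lambda$ matrix; and provided $\boldsymbol{\chi}(\Xi^0)$ is a strictly positive vector, Lemma~\ref{lem:Norm Limit} applies directly with $\mathbf{u}=\boldsymbol{\chi}(\Xi^0)$ and $\mathbf{X}=\mathbf{M}$, giving $\Vert\boldsymbol{\chi}(\Xi^0)\mathbf{M}^n\Vert_1\overset{n\to\infty}{\asymp}\rho(\mathbf{M})^n$, which is exactly the claim.

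\medskip

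The one genuine gap to close is the positivity of $\boldsymbol{\chi}(\Xi^0)$: the initial graph is typically a single arc of one colour, so $\boldsymbol{\chi}(\Xi^0)$ need not be strictly positive, and Lemma~\ref{lem:Norm Limit} as stated requires a positive vector. I expect this to be the main (and only) obstacle, and the fix is routine: since $\mathbf{M}$ is primitive, there is some $p\in\mathbb{N}$ with $\mathbf{M}^p$ strictly positive, hence $\boldsymbol{\chi}(\Xi^0)\mathbf{M}^p=\boldsymbol{\chi}(\Xi^p)$ is a strictly positive vector. Applying Lemma~\ref{lem:Norm Limit} with starting vector $\boldsymbol{\chi}(\Xi^p)$ gives $|E(\Xi^{p+n})|=\Vert\boldsymbol{\chi}(\Xi^p)\mathbf{M}^n\Vert_1\overset{n\to\infty}{\asymp}\rho(\mathbf{M})^n$, and a shift of index by the constant $p$ (absorbing the factor $\rho(\mathbf{M})^{-p}$ into the implied constant) recovers $|E(\Xi^n)|\overset{n\to\infty}{\asymp}\rho(\mathbf{M})^n$. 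Alternatively one could simply invoke Lemma~\ref{lem:Norm Range} twice, once with the non-negative vector pushed forward one primitivity-step to make it positive, to get two-sided bounds $c^{-1}\rho(\mathbf{M})^n\le |E(\Xi^n)|\le c\,\rho(\mathbf{M})^n$ directly; either route finishes the proof.
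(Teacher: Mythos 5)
Your proposal is correct and follows essentially the same route as the paper, whose proof is simply the one-line observation $\boldsymbol{\chi}(\Xi^n)=\boldsymbol{\chi}(\Xi^{n-1})\mathbf{M}$ combined implicitly with the Perron--Frobenius norm estimates (Lemmas~\ref{lem:Norm Range}/\ref{lem:Norm Limit}). Your extra care about the initial vector $\boldsymbol{\chi}(\Xi^0)$ not being strictly positive, fixed by pushing forward finitely many steps using primitivity, is a detail the paper leaves implicit, and your fix is the standard one.
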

\begin{proof}
By $\chi(\Xi^n)=\chi(\Xi^{n-1})\mathbf{M}$.
\end{proof}

\begin{lemma}\label{lem:Node Growth}
$|V(\Xi^n)|\overset{n \to \infty}{\asymp} \rho(\mathbf{M})^n$.
\end{lemma}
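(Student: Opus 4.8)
The plan is to reduce the node count to the already-established arc count. First I would observe that each substitution step replaces every arc by one of finitely many fixed rule graphs $R_i$, so the number of \emph{new} vertices created when substituting a single $i$-coloured arc is a constant $\nu_i := |V(R_i)| - 2$ (subtracting the two boundary nodes $A$ and $B$, which are identified with pre-existing endpoints). Consequently, passing from $\Xi^{n-1}$ to $\Xi^{n}$, the vertex set grows by
\[
|V(\Xi^{n})| = |V(\Xi^{n-1})| + \sum_{i=1}^{\lambda} \nu_i\,[\boldsymbol{\chi}(\Xi^{n-1})]_i = |V(\Xi^{n-1})| + \langle \boldsymbol{\nu}, \boldsymbol{\chi}(\Xi^{n-1})\rangle,
\]
where $\boldsymbol{\nu} = (\nu_1,\dots,\nu_\lambda)$. (One must check that no spurious identifications occur beyond the prescribed $A,B$ gluings, which follows from the construction of the iterated graph system and the requirement $\dist_{R_i}(A,B)\ge 2$; this bookkeeping is the one genuinely fiddly point.)

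Next I would telescope: $|V(\Xi^{n})| = |V(\Xi^0)| + \sum_{m=0}^{n-1} \langle \boldsymbol{\nu}, \boldsymbol{\chi}(\Xi^{m})\rangle$. Since $\boldsymbol{\chi}(\Xi^m) = \boldsymbol{\chi}(\Xi^0)\mathbf{M}^m$ by Lemma~\ref{lem:Arc Transition Matrix} and the recursion already used in Lemma~\ref{lem:Arc Growth}, each summand satisfies $\langle \boldsymbol{\nu}, \boldsymbol{\chi}(\Xi^m)\rangle = \boldsymbol{\chi}(\Xi^0)\mathbf{M}^m\boldsymbol{\nu}^{\mathsf T}$. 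Here I would invoke Lemma~\ref{lem:Norm Limit} (after noting $\boldsymbol{\nu}$ is a positive vector — which holds because $\dist_{R_i}(A,B)\ge 2$ forces $R_i$ to have at least one internal vertex, so $\nu_i \ge 1$): each term is $\asymp \rho(\mathbf{M})^m$. Since the system is primitive, $\rho(\mathbf{M}) > 1$ (indeed $\rho(\mathbf{M})\ge \|\boldsymbol{\chi}(R_i)\|_1 \ge 2$ for the arc counts), so the geometric-type sum is dominated by its last term: $\sum_{m=0}^{n-1} \rho(\mathbf{M})^m \asymp \rho(\mathbf{M})^{n-1} \asymp \rho(\mathbf{M})^n$.

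Combining, $|V(\Xi^n)| \asymp \rho(\mathbf{M})^n$, which is the claim. The main obstacle is not analytic but combinatorial: carefully verifying that the vertex increment per substitution is exactly the constant $\langle\boldsymbol{\nu},\boldsymbol{\chi}(\Xi^{n-1})\rangle$ with no over- or under-counting at the glued endpoints — i.e. that distinct substituted copies of rule graphs share only the boundary vertices they are supposed to share. Given the setup in Subsection~\ref{sec:deterministic iterated graph systems}, this is straightforward but needs to be stated; everything after it is a routine application of Lemmas~\ref{lem:Norm Range}–\ref{lem:Arc Growth}. (Alternatively, one can sidestep the exact count: $|V(\Xi^n)|\le |V(\Xi^0)| + \sum_{i}|V(R_i)|\cdot[\boldsymbol{\chi}(\Xi^{n-1})]_i$ gives the upper bound $\lesssim \rho(\mathbf{M})^n$ immediately, while $|V(\Xi^n)| \ge \tfrac12|E(\Xi^n)| + 1 \gtrsim \rho(\mathbf{M})^n$ follows from handshaking together with Lemma~\ref{lem:Arc Growth}, since in a simple graph the number of vertices is at least $\sqrt{2|E|}$ — wait, that is too weak; better is $|V(\Xi^n)| \ge |V(\Xi^{n-1})| + (\min_i \nu_i)\,|E(\Xi^{n-1})| \gtrsim \rho(\mathbf{M})^{n-1}$ — so the two-sided bound closes without the delicate count.)
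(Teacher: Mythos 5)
Your argument is correct and is essentially the paper's own proof: both count the new vertices created at each step as $\langle\boldsymbol{\nu},\boldsymbol{\chi}(\Xi^{n-1})\rangle$ with $\nu_i=|V(R_i)|-2$, telescope, and then control the resulting sum via the arc-growth rate $\rho(\mathbf{M})^m$ from Lemmas~\ref{lem:Norm Range}--\ref{lem:Arc Growth}. Your added remarks (that $\nu_i\geq 1$ follows from $\dist_{R_i}(A,B)\geq 2$, and that one must rule out spurious identifications at the glued endpoints) only make explicit details the paper leaves implicit.
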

\begin{proof}
Define
\[
 V^*(\Xi^n) = \{v \::\: v\in V(\Xi^n), v\notin V(\Xi^{n-1})\}\,.
\]
Then$|V(\Xi^n)|=\sum_{i=0}^{n} |V^*(\Xi^i)|$.
Notice all new nodes $V^*(\Xi^n)$ in $V(\Xi^n)$ are in fact generated by substituting arcs in $\Xi^{n-1}$.
Hence, for all $n\in \mathbb{N}$,
\begin{equation*} 
  |V^*(\Xi^n)| = \boldsymbol{\chi}(\Xi^{n-1})
\begin{pmatrix}
|V(R_{1})| -2 \\
\vdots\\
|V(R_{\lambda})| -2 
\end{pmatrix}
\,.
\end{equation*}
Given the initial graph $|V(\Xi^0)|=2$,
\begin{equation*} 
|V(\Xi^n)|=2+\sum_{i=1}^{n} (\boldsymbol{\chi}(\Xi^{i-1}))
\begin{pmatrix}
|V(R_{1})| -2 \\
\vdots\\
|V(R_{\lambda})| -2 
\end{pmatrix}
\,.
\end{equation*} 
Since the row vector only contains constants, 
\begin{align*}
|V(\Xi^n)|
&\overset{n \to \infty}{\asymp}
\bigg\Vert \sum_{i=1}^{n-1} \boldsymbol{\chi}(\Xi^{i-1})\bigg\Vert_1 
\overset{n \to \infty}{\asymp}
\sum_{i=1}^{n-1} \rho(\mathbf{M})^i
\overset{n \to \infty}{\asymp}
\rho(\mathbf{M})^n\,.
\end{align*}

\end{proof}

\begin{theorem}
$\Xi$ is a sparse graph.
\end{theorem}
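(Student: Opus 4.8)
The plan is to read sparsity of the graph limit $\Xi$ through its approximating sequence $\{\Xi^n\}$: the graph $\Xi$ is sparse precisely when the edge count grows at most linearly in the vertex count, i.e.\ when $\sup_n |E(\Xi^n)|/|V(\Xi^n)|<\infty$, equivalently when the average degree $2|E(\Xi^n)|/|V(\Xi^n)|$ stays bounded as $n\to\infty$. So the whole statement is a corollary of the two growth estimates just established.

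First I would invoke Lemma~\ref{lem:Arc Growth}, which gives $|E(\Xi^n)|\overset{n\to\infty}{\asymp}\rho(\mathbf{M})^n$, together with Lemma~\ref{lem:Node Growth}, which gives $|V(\Xi^n)|\overset{n\to\infty}{\asymp}\rho(\mathbf{M})^n$. Since the standing assumption $\dist_{R_i}(A,B)\ge 2$ forces every row sum of $\mathbf{M}$ to be at least $2$, and $\mathbf{M}$ is primitive, we have $\rho(\mathbf{M})>1$, so both sides genuinely grow and the $\asymp$-relations carry positive finite constants. Dividing the two relations — which is legitimate exactly because the denominator $|V(\Xi^n)|$ is bounded away from $0$ and comparable to the same quantity $\rho(\mathbf{M})^n$ — yields
\[
\frac{|E(\Xi^n)|}{|V(\Xi^n)|}\;\overset{n\to\infty}{\asymp}\;1 ,
\]
so this ratio converges to a constant $c\in(0,\infty)$ and in particular is uniformly bounded in $n$.

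Consequently the average degree of $\Xi^n$ equals $2|E(\Xi^n)|/|V(\Xi^n)|\to 2c$, hence is bounded; since every finite subgraph of $\Xi$ sits inside some $\Xi^n$, the limit $\Xi$ has bounded average degree and is therefore sparse. There is no real obstacle here: the only point requiring a line of care is that a quotient of two $\overset{n\to\infty}{\asymp}$-relations with a common growth rate is again such a relation, which is immediate from the definition of $\asymp$ once one notes the denominators are comparable to $\rho(\mathbf{M})^n$ and thus never degenerate.
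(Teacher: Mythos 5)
Your argument is correct and rests on exactly the same two inputs as the paper's one-line proof, namely Lemma~\ref{lem:Arc Growth} and Lemma~\ref{lem:Node Growth} giving $|E(\Xi^n)|\asymp|V(\Xi^n)|\asymp\rho(\mathbf{M})^n$. The only cosmetic difference is that the paper formalises sparsity as vanishing graph density $D(\Xi)=\lim_n |E(\Xi^n)|/\bigl(|V(\Xi^n)|(|V(\Xi^n)|-1)\bigr)=0$, which is an immediate consequence of your stronger conclusion that the average degree $2|E(\Xi^n)|/|V(\Xi^n)|$ stays bounded while $|V(\Xi^n)|\to\infty$.
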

\begin{proof}
The graph density
\[
D(\Xi):=\lim_{n\to\infty}\frac{|E(\Xi^n)|}{|V(\Xi^n)|(|V(\Xi^n)|-1)}
=0\,.\qquad
\]
\end{proof}

\section{Proof of Theorem~\ref{thm:Deterministic Substitution}}
\label{Section: Proof of fractality}
This section is devoted to proving Theorem~\ref{thm:Deterministic Substitution}.
However, before we are to do so, a few short but technical results are required, especially Theorem~\ref{thm:Spectral Combinatorial Matrix} and~\ref{thm:minimal Distance}.
\subsection{A combinatorial matrix problem}
\label{subSection: A combinatorial matrix problem}

\begin{lemma}\label{lem:Strict Collatz}
Suppose that, for any non-negative matrix $\mathbf{D}$ and positive row vector~$\mathbf{x}$,
$\min_i \frac{[\mathbf{D}\mathbf{x}]_i}{[\mathbf{x}]_i} < \max_i \frac{[\mathbf{D}\mathbf{x}]_i}{[\mathbf{x}]_i}$.
Then $\max_i \frac{[\mathbf{D}\mathbf{x}]_i}{[\mathbf{x}]_i} > \rho(\mathbf{D})$.
\end{lemma}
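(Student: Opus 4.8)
The plan is to prove the contrapositive-flavoured statement directly via the classical Collatz--Wielandt characterisation of the spectral radius for non-negative matrices, sharpened to a strict inequality. Recall that for a non-negative $k\times k$ matrix $\mathbf{D}$ and any positive vector $\mathbf{x}\in\mathbb{R}_+^k$ one always has the Collatz--Wielandt bounds
\[
\min_i \frac{[\mathbf{D}\mathbf{x}]_i}{[\mathbf{x}]_i}\ \leq\ \rho(\mathbf{D})\ \leq\ \max_i \frac{[\mathbf{D}\mathbf{x}]_i}{[\mathbf{x}]_i}\,,
\]
so the content of the lemma is that the \emph{upper} bound is strict precisely when $\mathbf{x}$ fails to be a Perron eigenvector, i.e.\ when $\min_i \tfrac{[\mathbf{D}\mathbf{x}]_i}{[\mathbf{x}]_i} < \max_i \tfrac{[\mathbf{D}\mathbf{x}]_i}{[\mathbf{x}]_i}$.

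First I would set $M := \max_i \tfrac{[\mathbf{D}\mathbf{x}]_i}{[\mathbf{x}]_i}$ and argue by contradiction, assuming $M = \rho(\mathbf{D})$ (equality, since $M\geq\rho(\mathbf{D})$ always). From $[\mathbf{D}\mathbf{x}]_i \leq M[\mathbf{x}]_i$ for all $i$ we get the componentwise inequality $\mathbf{D}\mathbf{x} \leq M\mathbf{x}$, with strict inequality in at least one coordinate by the hypothesis $\min_i < \max_i$. Next I would pair this with a left Perron vector: let $\mathbf{y}\geq 0$, $\mathbf{y}\neq 0$, satisfy $\mathbf{y}^{\mathsf T}\mathbf{D} = \rho(\mathbf{D})\,\mathbf{y}^{\mathsf T}$ (such a vector exists for any non-negative matrix by Perron--Frobenius). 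Then
\[
\rho(\mathbf{D})\,\mathbf{y}^{\mathsf T}\mathbf{x} \;=\; \mathbf{y}^{\mathsf T}\mathbf{D}\mathbf{x} \;\leq\; M\,\mathbf{y}^{\mathsf T}\mathbf{x} \;=\; \rho(\mathbf{D})\,\mathbf{y}^{\mathsf T}\mathbf{x}\,,
\]
so equality must hold throughout, forcing $\mathbf{y}^{\mathsf T}(M\mathbf{x} - \mathbf{D}\mathbf{x}) = 0$. Since $\mathbf{x}$ is positive, $\mathbf{y}\geq 0$, and $M\mathbf{x}-\mathbf{D}\mathbf{x}$ is non-negative with a strictly positive coordinate, say coordinate $i_0$, this equality forces $[\mathbf{y}]_{i_0} = 0$.

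The remaining work, and the one genuinely delicate point, is to promote ``$\mathbf{y}$ vanishes somewhere'' to a contradiction when $\mathbf{D}$ need not be irreducible. If $\mathbf{D}$ is irreducible this is immediate, since a left Perron vector of an irreducible non-negative matrix is strictly positive. In the reducible case one must be more careful: a left Perron vector can legitimately have zero entries, so the argument needs the additional observation that the support of $\mathbf{y}$ (the set $\{i : [\mathbf{y}]_i>0\}$) is a ``backward-closed'' set for $\mathbf{D}$ — if $[\mathbf{y}]_i=0$ and $m_{ji}>0$ then $[\mathbf{y}]_j=0$ as well — so $\mathbf{y}$ is supported on a union of ``initial'' strongly connected components, and one then restricts attention to the principal submatrix indexed by that support, on which the displayed strict inequality still propagates and produces the contradiction. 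I expect this reducible-case bookkeeping to be the main obstacle; however, I note that in the intended application (Section~\ref{Section: Proof of fractality}) the matrices $\mathbf{D}\in\mathcal{D}$ are assumed \textbf{primitive}, hence irreducible, so for the purposes of this paper the clean irreducible argument of the previous paragraph suffices and one may simply state the lemma for the primitive case if the general case proves unwieldy.
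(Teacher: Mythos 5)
The paper gives no proof of this lemma at all---it simply cites Lemma~8.1 of \cite{Li2023}---so your argument is a genuinely self-contained alternative, and in the irreducible (in particular primitive) case it is correct and complete: from $\mathbf{D}\mathbf{x}\leq M\mathbf{x}$ with strict inequality in at least one coordinate and a \emph{strictly positive} left Perron vector $\mathbf{y}$, the pairing $\rho(\mathbf{D})\,\mathbf{y}^{T}\mathbf{x}=\mathbf{y}^{T}\mathbf{D}\mathbf{x}<M\,\mathbf{y}^{T}\mathbf{x}$ immediately forces $\rho(\mathbf{D})<M$, which is exactly what is used in Theorem~\ref{thm:10.2}, where the row-modified matrix lies in $\mathcal{D}$ and the iterated graph system is assumed primitive.

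The one genuine defect is your sketch of the reducible case. No support bookkeeping can rescue it, because the lemma as literally stated for ``any non-negative matrix'' is false: take $\mathbf{D}=\begin{pmatrix}1&0\\0&2\end{pmatrix}$ and $\mathbf{x}=(1,1)$. Then $\min_i [\mathbf{D}\mathbf{x}]_i/[\mathbf{x}]_i=1<2=\max_i [\mathbf{D}\mathbf{x}]_i/[\mathbf{x}]_i$, yet $\max_i [\mathbf{D}\mathbf{x}]_i/[\mathbf{x}]_i=2=\rho(\mathbf{D})$, not strictly greater. In this example the left Perron vector $\mathbf{y}=(0,1)$ vanishes precisely at the coordinate where $M\mathbf{x}-\mathbf{D}\mathbf{x}$ is positive, and on the support $\{2\}$ the inequality $\mathbf{D}\mathbf{x}\leq M\mathbf{x}$ is an equality, so your claim that ``the strict inequality still propagates'' to the principal submatrix indexed by the support breaks down---there is nothing left to contradict. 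Hence your fallback position is not merely a convenience but a necessity: the lemma must carry an irreducibility (or primitivity) hypothesis, which is how it should be read here since the paper only ever applies it to primitive matrices in $\mathcal{D}$.
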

\begin{proof}
See Lemma~8.1 in \cite{Li2023}.
\end{proof}

Recall the defined notation from Definition~\ref{def:Path Matrix D}.

\begin{theorem}\label{thm:10.2}
Suppose that all matrices in $\mathcal{D}_{\min}$ are primitive,
and let $\mathbf{d}_i \in \mathcal{V}_i$
and $\mathbf{D}_{\min}\in\mathcal{D}_{\min}$.
Let $\mathbf{v}_{\min}$ be the corresponding right column Perron-Frobenius vector of $\mathbf{D}_{\min}$.
Then
\[
  \mathbf{d}_i\mathbf{v}_{\min} \geq \rho(\mathbf{D}_{\min}) [\mathbf{v}_{\min}]_i\,.
\]
\end{theorem}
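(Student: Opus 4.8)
The statement asserts that for any choice-matrix $\mathbf{D}_{\min}\in\mathcal{D}_{\min}$ with Perron–Frobenius column vector $\mathbf{v}_{\min}$, and any row $\mathbf{d}_i\in\mathcal{V}_i$ of a \emph{possibly different} matrix in $\mathcal{D}$, we have $\mathbf{d}_i\mathbf{v}_{\min}\geq\rho(\mathbf{D}_{\min})[\mathbf{v}_{\min}]_i$. The natural approach is a proof by contradiction: suppose the inequality fails for some index $i=i_0$, i.e. $\mathbf{d}_{i_0}\mathbf{v}_{\min}<\rho(\mathbf{D}_{\min})[\mathbf{v}_{\min}]_{i_0}$. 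I would then build a new choice matrix $\mathbf{D}'$ from $\mathbf{D}_{\min}$ by swapping in the row $\mathbf{d}_{i_0}$ for row $i_0$, keeping all other rows of $\mathbf{D}_{\min}$. Since $\mathbf{d}_{i_0}\in\mathcal{V}_{i_0}$, the matrix $\mathbf{D}'$ is again a legitimate element of $\mathcal{D}$ (each row is chosen from the correct $\mathcal{V}_j$). The goal is then to show $\rho(\mathbf{D}')<\rho(\mathbf{D}_{\min})$, contradicting the minimality of $\rho_{\min}(\mathcal{D})=\rho(\mathbf{D}_{\min})$.

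\textbf{Key steps.} First, record the Perron eigenvector relation $\mathbf{D}_{\min}\mathbf{v}_{\min}=\rho(\mathbf{D}_{\min})\mathbf{v}_{\min}$, which gives $[\mathbf{D}_{\min}\mathbf{v}_{\min}]_j=\rho(\mathbf{D}_{\min})[\mathbf{v}_{\min}]_j$ for every $j$; in the ratio language of Lemma~\ref{lem:Strict Collatz}, $\frac{[\mathbf{D}_{\min}\mathbf{v}_{\min}]_j}{[\mathbf{v}_{\min}]_j}=\rho(\mathbf{D}_{\min})$ for all $j$. Now pass to $\mathbf{D}'$: its rows agree with those of $\mathbf{D}_{\min}$ except in row $i_0$, so $\frac{[\mathbf{D}'\mathbf{v}_{\min}]_j}{[\mathbf{v}_{\min}]_j}=\rho(\mathbf{D}_{\min})$ for $j\neq i_0$, while $\frac{[\mathbf{D}'\mathbf{v}_{\min}]_{i_0}}{[\mathbf{v}_{\min}]_{i_0}}=\frac{\mathbf{d}_{i_0}\mathbf{v}_{\min}}{[\mathbf{v}_{\min}]_{i_0}}<\rho(\mathbf{D}_{\min})$ by the contradiction hypothesis. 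Therefore $\max_j\frac{[\mathbf{D}'\mathbf{v}_{\min}]_j}{[\mathbf{v}_{\min}]_j}=\rho(\mathbf{D}_{\min})$ and the maximum is attained at some $j\neq i_0$, while the minimum (attained at $i_0$) is strictly smaller — so $\min_j<\max_j$. I would want the Collatz–Wielandt-type bound in the \emph{downward} direction: namely that $\min_j\frac{[\mathbf{D}'\mathbf{x}]_j}{[\mathbf{x}]_j}\leq\rho(\mathbf{D}')$ for positive $\mathbf{x}$, with strict inequality when $\min_j<\max_j$ and $\mathbf{D}'$ is primitive (the mirror image of Lemma~\ref{lem:Strict Collatz}). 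Applying this with $\mathbf{x}=\mathbf{v}_{\min}$ gives $\rho(\mathbf{D}')>\min_j\frac{[\mathbf{D}'\mathbf{v}_{\min}]_j}{[\mathbf{v}_{\min}]_j}$; combining with $\max_j\frac{[\mathbf{D}'\mathbf{v}_{\min}]_j}{[\mathbf{v}_{\min}]_j}=\rho(\mathbf{D}_{\min})$ — wait, that only gives a lower bound — so instead I would use the \emph{upper} Collatz–Wielandt estimate $\rho(\mathbf{D}')\leq\max_j\frac{[\mathbf{D}'\mathbf{v}_{\min}]_j}{[\mathbf{v}_{\min}]_j}=\rho(\mathbf{D}_{\min})$, with strictness because $\mathbf{D}'$ is primitive and the ratios are non-constant (by Lemma~\ref{lem:Strict Collatz} applied to $\mathbf{D}'$). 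That yields $\rho(\mathbf{D}')<\rho(\mathbf{D}_{\min})=\rho_{\min}(\mathcal{D})$, the desired contradiction.

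\textbf{Main obstacle.} The delicate point is the primitivity of the newly assembled matrix $\mathbf{D}'$: the hypothesis only guarantees primitivity for matrices in $\mathcal{D}_{\min}$ (and for $\mathbf{M}$ and all of $\mathcal{D}$ in the ``primitive iterated graph system'' assumption), so I must confirm that $\mathbf{D}'\in\mathcal{D}$ — which it is, being a valid choice matrix — and hence inherits primitivity from the global hypothesis that \emph{all} matrices in $\mathcal{D}$ are primitive. I would double-check that Lemma~\ref{lem:Strict Collatz} is being invoked in exactly the right form; if the paper's version is stated only for the $\max$ side, I would either cite the analogous $\min$-side statement from \cite{Li2023} or deduce it by applying the lemma and noting that equality $\max_j\frac{[\mathbf{D}'\mathbf{v}_{\min}]_j}{[\mathbf{v}_{\min}]_j}=\rho(\mathbf{D}')$ would force all ratios equal, contradicting $\min_j<\max_j$. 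A secondary subtlety is the edge case where $\mathbf{d}_{i_0}$ already equals the $i_0$-th row of $\mathbf{D}_{\min}$: then the hypothesis $\mathbf{d}_{i_0}\mathbf{v}_{\min}<\rho(\mathbf{D}_{\min})[\mathbf{v}_{\min}]_{i_0}$ is immediately false, so that case is vacuous and the argument goes through cleanly.
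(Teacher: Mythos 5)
Your proposal is correct and follows essentially the same route as the paper's own proof: assume the inequality fails, replace the $i$-th row of $\mathbf{D}_{\min}$ by $\mathbf{d}_i$ to obtain a matrix in $\mathcal{D}$ whose ratios $[\mathbf{D}'\mathbf{v}_{\min}]_j/[\mathbf{v}_{\min}]_j$ are non-constant with maximum $\rho(\mathbf{D}_{\min})$, then apply Lemma~\ref{lem:Strict Collatz} to get $\rho(\mathbf{D}')<\rho(\mathbf{D}_{\min})$, contradicting minimality. Your added care about the primitivity of the swapped-in matrix (inherited from the global primitivity assumption since $\mathbf{D}'\in\mathcal{D}$) and about which side of the Collatz--Wielandt bound is strict only makes explicit what the paper leaves implicit.
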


\begin{proof}
Assume that $\mathbf{d}_i\mathbf{v}_{\min} < \rho(\mathbf{D}_{\min}) [\mathbf{v}_{\min}]_i$. 
Define $\mathbf{D}$ to be the matrix obtained by replacing the $j$-th row of $\mathbf{D}_{\min}$ by $\mathbf{d}_i$,
and note that $\mathbf{D}\in \mathcal{D}$.
Then for each $j\neq i$,
\[
  [\mathbf{D}\mathbf{v}_{\min}]_j = [\mathbf{D}_{\min}\mathbf{v}_{\min}]_j = \rho(\mathbf{D}_{\min})[\mathbf{v}_{\min}]_j
\]
whereas
$[\mathbf{D}\mathbf{v}_{\min}]_i = \mathbf{d}_i\mathbf{v}_{\min} < \rho(\mathbf{D}_{\min}) [\mathbf{v}_{\min}]_i$.
Therefore,
\[
  \min_j \frac{[\mathbf{D}\mathbf{v}_{\min}]_j}{[\mathbf{v}_{\min}]_j}
  =\frac{[\mathbf{D}\mathbf{v}_{\min}]_i}{[\mathbf{v}_{\min}]_i}
  < \rho(\mathbf{D}_{\min})
  = \max_j \frac{[\mathbf{D}\mathbf{v}_{\min}]_j}{[\mathbf{v}_{\min}]_j}\,.
\]
By Lemma~\ref{lem:Strict Collatz},
\[
    \rho(\mathbf{D})
  < \max_j \frac{[\mathbf{D}\mathbf{v}_{\min}]_j}{[\mathbf{v}_{\min}]_j}
  = \rho(\mathbf{D}_{\min})\,,
\]
contradicting the definition of $\mathbf{D}_{\min}$.
\end{proof}

Next is our second auxiliary result.
\begin{theorem}\label{thm:Spectral Combinatorial Matrix}
For any number of matrices $\mathbf{D}_1,\ldots,\mathbf{D}_n\in\mathcal{D}$,
\[
  \rho\biggl(\prod_{\ell=1}^n \mathbf{D}_\ell\biggr) \geq \rho_{\min}(\mathcal{D})^n\,.
\]
\end{theorem}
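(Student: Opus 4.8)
The plan is to prove the statement by induction on $n$, using Theorem~\ref{thm:10.2} as the workhorse. For the base case $n=1$ the claim is just the definition of $\rho_{\min}(\mathcal{D})$. For the inductive step, I would like to compare the spectral radius of the product $\prod_{\ell=1}^n \mathbf{D}_\ell$ with that of the shorter product $\prod_{\ell=2}^n \mathbf{D}_\ell$, and the natural way to access the spectral radius of a non-negative primitive matrix is via a positive test vector together with the Collatz--Wielandt characterisation $\rho(\mathbf{A}) \geq \min_i [\mathbf{A}\mathbf{x}]_i / [\mathbf{x}]_i$ for any positive column vector $\mathbf{x}$.

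Concretely, first I would pick $\mathbf{D}_{\min}\in\mathcal{D}_{\min}$ with right Perron--Frobenius vector $\mathbf{v}_{\min}>0$, so $\mathbf{D}_{\min}\mathbf{v}_{\min}=\rho_{\min}(\mathcal{D})\,\mathbf{v}_{\min}$. The key pointwise inequality supplied by Theorem~\ref{thm:10.2} is that \emph{every} row $\mathbf{d}_i$ drawn from the corresponding $\mathcal{V}_i$ satisfies $\mathbf{d}_i \mathbf{v}_{\min} \geq \rho_{\min}(\mathcal{D})\,[\mathbf{v}_{\min}]_i$; since any $\mathbf{D}\in\mathcal{D}$ has its $i$-th row lying in $\mathcal{V}_i$, this says $\mathbf{D}\mathbf{v}_{\min} \geq \rho_{\min}(\mathcal{D})\,\mathbf{v}_{\min}$ entrywise, for \textbf{every} $\mathbf{D}\in\mathcal{D}$. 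This monotone action on the single fixed positive vector $\mathbf{v}_{\min}$ is exactly what makes the induction go through: applying it repeatedly,
\[
  \Bigl(\prod_{\ell=1}^n \mathbf{D}_\ell\Bigr)\mathbf{v}_{\min}
  = \mathbf{D}_1\Bigl(\mathbf{D}_2\cdots\mathbf{D}_n \mathbf{v}_{\min}\Bigr)
  \geq \mathbf{D}_1\bigl(\rho_{\min}(\mathcal{D})^{\,n-1}\mathbf{v}_{\min}\bigr)
  \geq \rho_{\min}(\mathcal{D})^{\,n}\,\mathbf{v}_{\min}\,,
\]
where at each step I use $\mathbf{D}_\ell \mathbf{v}_{\min}\geq \rho_{\min}(\mathcal{D})\mathbf{v}_{\min}$ together with non-negativity of the remaining factors (non-negative matrices preserve the entrywise order). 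Then $\min_i \bigl[(\prod_{\ell}\mathbf{D}_\ell)\mathbf{v}_{\min}\bigr]_i / [\mathbf{v}_{\min}]_i \geq \rho_{\min}(\mathcal{D})^n$, and by the Collatz--Wielandt lower bound $\rho\bigl(\prod_{\ell=1}^n\mathbf{D}_\ell\bigr)\geq \rho_{\min}(\mathcal{D})^n$, which is the claim.

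One point that needs a little care is the hypothesis of Theorem~\ref{thm:10.2}: it is stated for matrices in $\mathcal{D}_{\min}$ being primitive, whereas here the ambient iterated graph system is primitive (so every $\mathbf{D}\in\mathcal{D}$, in particular every $\mathbf{D}_{\min}$, is primitive and has a genuine positive Perron--Frobenius eigenvector $\mathbf{v}_{\min}$). The product $\prod_\ell \mathbf{D}_\ell$ need not itself be primitive, but that does not matter: the Collatz--Wielandt inequality $\rho(\mathbf{A})\geq \min_i [\mathbf{A}\mathbf{x}]_i/[\mathbf{x}]_i$ holds for \emph{all} non-negative matrices and positive $\mathbf{x}$, so no primitivity of the product is used. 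The main (and essentially only) obstacle is thus Theorem~\ref{thm:10.2} itself, which has already been established; once the uniform entrywise bound $\mathbf{D}\mathbf{v}_{\min}\geq\rho_{\min}(\mathcal{D})\mathbf{v}_{\min}$ is in hand, the rest is a one-line induction and a single appeal to Collatz--Wielandt. I would also remark that the bound is sharp, attained by taking all $\mathbf{D}_\ell = \mathbf{D}_{\min}$, which gives equality throughout.
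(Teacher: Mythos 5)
Your proposal is correct and follows essentially the same route as the paper: Theorem~\ref{thm:10.2} gives the uniform entrywise bound $\mathbf{D}\mathbf{v}_{\min}\geq\rho_{\min}(\mathcal{D})\mathbf{v}_{\min}$ for every $\mathbf{D}\in\mathcal{D}$, which is iterated across the product and then converted into a spectral-radius bound via the Collatz--Wielandt formula, exactly as in the paper's proof. Your added remarks on why primitivity of the product is not needed and on sharpness are fine but not required.
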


\begin{proof}
For each $i$ and $\ell$, the row vector $[\mathbf{D}_\ell]_i$ satisfies
$[\mathbf{D}_\ell]_i\mathbf{v}_{\min} \geq\rho(\mathbf{D}_{\min})[\mathbf{v}_{\min}]_i$
by Theorem~\ref{thm:10.2}, so
\[
  \mathbf{D}_\ell\mathbf{v}_{\min} \geq\rho(\mathbf{D}_{\min})\mathbf{v}_{\min}\,.
\]
Then
\[
       \prod_{\ell=1}^n     \mathbf{D}_\ell \mathbf{v}_{\min}
  \geq \rho(\mathbf{D}_{\min})
       \prod_{\ell=1}^{n-1} \mathbf{D}_\ell \mathbf{v}_{\min}
  \geq \cdots
  \geq \rho(\mathbf{D}_{\min})^n\mathbf{v}_{\min},
\]
so
\[
  \min_i \frac{\displaystyle\biggl[\prod_{\ell=1}^n \mathbf{D}_\ell \mathbf{v}_{\min}\biggr]_i}{[\mathbf{v}_{\min}]_i} \geq \rho(\mathbf{D}_{\min})^n\,.
\]
By Collatz-Wielandt Formula,
\[
       \rho\biggl(\prod_{\ell=1}^n \mathbf{D}_\ell\biggr)
    =  \max_{\mathbf{x}>0} \min_i \frac{\displaystyle\biggl[\prod_{\ell=1}^n \mathbf{D}_\ell \mathbf{x}       \biggr]_i}{[\mathbf{x}       ]_i}
  \geq \min_i \frac{\displaystyle\biggl[\prod_{\ell=1}^n \mathbf{D}_\ell \mathbf{v}_{\min}\biggr]_i}{[\mathbf{v}_{\min}]_i}
  \geq \rho_{\min}(\mathcal{D})^n\,.
\]
\end{proof}
\begin{remark}
    If $\mathcal{D}$ is not defined the way as Definition~\ref{def:Path Matrix D}, then the theorem does not hold.
    A straight example is given by 
    \[
    \mathcal{D} 
    =\big\{
    \mathbf{D}_1= 
    \begin{pmatrix}   
    1 & 1 \\
    1 & 2
    \end{pmatrix},
    \mathbf{D}_2= 
    \begin{pmatrix}   
    2 & 1 \\
    1 & 1
    \end{pmatrix}
    \big\} 
    \]
    with $\rho(\mathbf{D}_1)=\rho(\mathbf{D}_2)\approx2.6180$.
    Then $5.8284\approx\rho(\mathbf{D}_1\mathbf{D}_2)\leq\rho(\mathbf{D}_1)^2=\rho(\mathbf{D}_2)^2\approx6.8539$.

    However,
    \[
    \mathcal{D} 
    =\big\{
    \mathbf{D}_1= 
    \begin{pmatrix}   
    1 & 1 \\
    1 & 2
    \end{pmatrix},
    \mathbf{D}_2= 
    \begin{pmatrix}   
    2 & 1 \\
    1 & 1
    \end{pmatrix},
    \mathbf{D}_3= 
    \begin{pmatrix}   
    2 & 1 \\
    1 & 2
    \end{pmatrix},
    \mathbf{D}_4= 
    \begin{pmatrix}   
    1 & 1 \\
    1 & 1
    \end{pmatrix}
    \big\} 
    \]
    is the family satisfying the Theorem~\ref{thm:Spectral Combinatorial Matrix}.
    The inherent property of this peculiar matrix family {\bf fundamentally ensures the existence of $\Xi$}.
\end{remark}

\subsection{Distance growth between nodes}
\label{subSection: Estimation of distance}
For a path $P$ in a graph, let $|P|$ denote the {\em length} of $P$; that is, the number of arcs (edges) in~$P$.
The {\em (minimal) distance} $d(u,v)$ between any two nodes $u$ and $v$ in a graph
is the shortest length $|P|$ of any path between $u$ and $v$.
Here, we adopt the convention that {\em paths} visit any node at most once and do therefore not contain cycles or loops.

In~\cite{LiYaWa19},
the authors provided an analysis of the distances between nodes
in deterministic iterated graph systems,
and it was proved that these distances grew with bounded growth rates.
However, a feasible algorithm was not given to calculate these rates;
they are indeed difficult to determine,
and it becomes more complicated to do so for random iterated graph systems.

To illustrate why these distances are difficult to estimate,
consider Fig.~\ref{fig:example3} in which a deterministic iterated graph system is shown,
with just two substitution rules: one for blue arcs and one for red arcs.
In the shown substitution sequence $\Xi^0\to \Xi^1\to \Xi^2$,
the distance between the initial nodes $A$ and $B$ is~1.
In $\Xi^1$, this distance is~2 and is the length of the path $P^{(1)}$ between $A$ and $B$.
However,
substituting the arcs of $P^{1}$ gives paths of length~10 between $A$ and $B$;
this is larger than 9, the distance between $A$ and $B$.
Rather, the shortest paths between $A$ and $B$, such as $P^{(2)}$,
here arise disjointly and independently from the arcs of $P^{1}$.

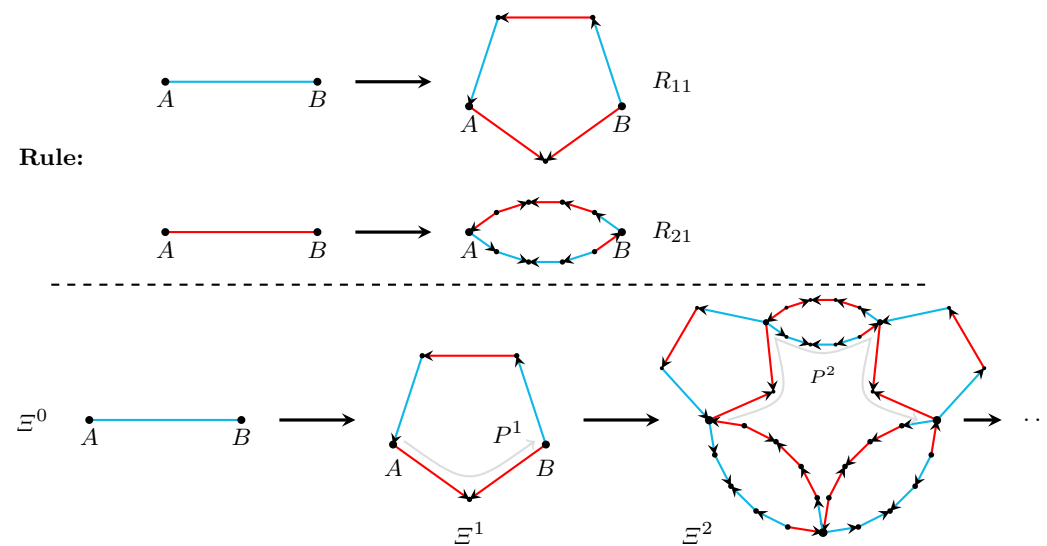
\begin{figure}[ht]
\centering
\begin{tikzpicture}[scale=1,black,thick]
  \begin{scope}[scale=1,shift={(1,2)}]
  \draw (-1.5,-1) node {\normalsize \textbf{Rule:}};
    \draw[draw=myaqua!90!black] (0,0) -- (2,0);
    \fill (0,0) circle (1.5pt) (2,0) circle (1.5pt);
    \draw (0,0) node[below] {\footnotesize $A$};
    \draw (2,0) node[below] {\footnotesize $B$};
    \draw[very thick,>=stealth,->] (2.5,0) --++ (1,0);
    \Rrulea{5,0}{.67}{0}{\draw (-162:1.577) node[below] {\footnotesize $A$};
                         \draw (-18: 1.577) node[below] {\footnotesize $B$};
                         \draw (2.5,0) node {\makebox(0,0){$R_{11}$}};}
  \end{scope}
  \begin{scope}[scale=1,shift={(1,0)}]
    \draw[draw=red] (0,0) -- (2,0);
    \fill (0,0) circle (1.5pt) (2,0) circle (1.5pt);
    \draw (0,0) node[below] {\footnotesize $A$};
    \draw (2,0) node[below] {\footnotesize $B$};
    \draw[very thick,>=stealth,->] (2.5,0) --++ (1,0);
    \Rruleb{5,0}{.67}{0}{\draw (-1.5,0) node[below] {\footnotesize $A$};
                         \draw ( 1.5,0) node[below] {\footnotesize $B$};
                         \draw ( 2.5,0) node {\makebox(0,0){$R_{21}$}};}
    \draw[dashed] (-1.5,-.7) -- (10,-.7);
  \end{scope}
  \begin{scope}[scale=1,shift={(0,-2.5)}]
    \draw[draw=myaqua!90!black] (0,0) -- (2,0);
    \fill (0,0) circle (1.5pt) (2,0) circle (1.5pt);
    \draw (0,0) node[below] {\footnotesize $A$};
    \draw (2,0) node[below] {\footnotesize $B$};
    \draw (-.75,0) node {$\Xi^0$};
    \draw[very thick,>=stealth,->] (2.5,0) --++ (1,0);
    \Rrulea{5,0}{.67}{0}{\draw (342:.8) node(){{\footnotesize$P^{1}$}};
                         \draw[gray!75!white,->] (198:1.35) .. controls (270:1.35) .. (342:1.35);}
    \draw (4,-.4) node[below] {\footnotesize $A$};
    \draw (6,-.4) node[below] {\footnotesize $B$};
    \draw (5,-1.5) node {$\Xi^1$};
    \draw[very thick,>=stealth,->] (6.5,0) --++ (1,0);
    \begin{scope}[shift={(9.65,0)},scale=.5]
      \Rrulea{150:3.085}{1}{ 60}{}
      \Rrulea{ 30:3.085}{1}{300}{}
      \Rruleb{  0,2.6  }{1}{  0}{}
      \Rruleb{ 1.5,-1.5}{1.41}{45}{}
      \Rruleb{-1.5,-1.5}{1.41}{315}{}
      \fill (-3,0) circle (2.25pt) (3,0) circle (2.25pt);
      \draw[gray!75!white,->] (180:2.5) .. controls (150:1) .. (120:2.5) ..
        controls (90:1.65) .. ( 60:2.5) .. controls ( 30:1) .. (0:2.5);
      \draw ( 90:1.2) node(){{\scriptsize$P^{2}$}};
      
    \end{scope}
    \draw (8,-1.5) node {$\Xi^2$};
    \draw[very thick,>=stealth,->] (11.5,0) --++ (0.5,0);
    \draw (12.5,0) node {$\dots$};
  \end{scope}
\end{tikzpicture}
\caption{An example of deterministic iterated graph system}
\label{fig:example3}
\end{figure}

In general, the shortest paths between nodes in each graph $\Xi^n$
and the shortest paths between nodes in $\Xi^{(n+1)}$
may be greatly independent of each other.
Nevertheless, it is possible to investigate how distances develop during the substitution process.
For this purpose, consider the two nodes $A$ and $B$ in $\Xi^0$
and let $A$ and $B$ also denote these two fixed nodes in each subsequent graph $\Xi^n$,
as indicated in Fig.~\ref{fig:example3}.
Let $\Gamma^n$ denote a shortest path between~$A$ and~$B$ \emph{}in $\Xi^{n}$.

\bigskip

\begin{definition}\label{def:Uniform}
{\rm
A path $P^{n+1}$ in $\Xi^{(n+1)}$ between $A$ and $B$ is {\em uniform} if
it arises by successive edge substitutions from paths $P^0,\ldots,P^n$ in $\Xi^{0},\ldots,\Xi^{n}$, respectively
such that, for each $s = 0,\ldots,n$ and each colour~$i$,
all edges of colour $i$ in $P^s$ when substituted are in $P^{s+1}$ replaced by
the same path between $A$ to $B$ in each $R_{i}$ for all $i$.
For instance, the path $P^{2}$ in Fig.~\ref{fig:example3} is uniform.}
\end{definition}

\begin{lemma}\label{lem:Uniform}
For each $n$, at least one shortest path $\Gamma^n$ is uniform.
\end{lemma}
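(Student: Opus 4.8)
\textbf{Proof proposal for Lemma~\ref{lem:Uniform}.}

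The plan is to start from an arbitrary shortest path $\Gamma^n$ between $A$ and $B$ in $\Xi^n$ and to ``straighten'' it into a uniform one without increasing its length, working downward through the substitution hierarchy. First I would recall that every arc of $\Xi^n$ was produced by a unique chain of substitutions, so each node and arc of $\Xi^n$ lies inside a well-defined copy of some rule graph $R_i$ sitting inside a copy of some $R_j$, and so on, down to $\Xi^0$; this nested structure is what a uniform path is required to respect. The key observation is that a shortest path $\Gamma^n$, when restricted to a single first-level copy of a rule graph $R_i$ replacing an arc $e$ of $\Xi^{n-1}$, either avoids that copy entirely, or enters and leaves it through the two distinguished nodes $A$ and $B$ of that copy, or it may pass through only one of $A,B$. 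Because $\Gamma^n$ is a \emph{shortest} path and hence visits no node twice, within each such copy it traverses a \emph{simple} subpath; the first step is to argue that we may assume this subpath, whenever it connects the copy's $A$ to its $B$, can be replaced by a globally shortest $A$--$B$ subpath inside the \emph{$(n-1)$-fold substituted} copy of $R_i$ without lengthening $\Gamma^n$ and without creating repeated nodes elsewhere (disjointness of distinct copies except at shared boundary nodes makes the replacement legitimate).

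The main induction would be on $n$. For $n=0$ the statement is vacuous (take $P^0 = \Gamma^0$). For the inductive step, view $\Xi^n$ as obtained from $\Xi^{n-1}$ by one round of substitution. Project $\Gamma^n$ down to $\Xi^{n-1}$: the sequence of first-level copies that $\Gamma^n$ visits, in order, traces out a walk in $\Xi^{n-1}$, and after deleting any backtracking (again using that $\Gamma^n$ is simple, so it cannot re-enter a copy it has left through the boundary) this yields a simple path $Q$ between $A$ and $B$ in $\Xi^{n-1}$. By minimality of $\Gamma^n$ one shows $Q$ is itself a shortest path in $\Xi^{n-1}$: if it were not, one could substitute a shorter $A$--$B$ path of $\Xi^{n-1}$, fill in each of its arcs by a shortest $A$--$B$ subpath of the corresponding rule graph, and obtain an $A$--$B$ path in $\Xi^n$ shorter than $\Gamma^n$, a contradiction. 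Apply the inductive hypothesis to get a uniform shortest path $\tilde Q$ in $\Xi^{n-1}$ arising from $P^0,\dots,P^{n-1}$. Now re-substitute: inside every copy of each colour-$i$ arc of $\tilde Q = P^{n-1}$, insert the \emph{same} chosen shortest $A$--$B$ path of $R_i$; call the result $P^n$. By construction $P^n$ is uniform, and its length equals that of $\tilde Q$ with each colour-$i$ arc expanded by a fixed amount $\ell_i := d_{R_i}(A,B)$, i.e. $|P^n| = \sum_i \ell_i\,[\boldsymbol{\chi}(\tilde Q)]_i$; one then checks this equals $|\Gamma^n|$, so $P^n$ is a shortest path.

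The delicate point — and the step I expect to be the main obstacle — is the length bookkeeping showing $|P^n| = |\Gamma^n|$ rather than merely $\le$ or $\ge$. The inequality $|P^n| \ge |\Gamma^n|$ is immediate since $\Gamma^n$ is shortest. For $|P^n| \le |\Gamma^n|$ one needs that within every first-level copy of $R_i$ that $\Gamma^n$ traverses from $A$ to $B$, the subpath used has length at least $\ell_i$ measured in the \emph{fully substituted} copy, and that the ``uniform'' choice realizes exactly this minimum simultaneously in every copy; the subtlety is that the cheapest $A$--$B$ route through a copy of $R_i$ after $n-1$ further substitutions need not be the substitution of the cheapest $A$--$B$ route in $R_i$ itself (precisely the phenomenon illustrated by Fig.~\ref{fig:example3}, where substituting $P^1$ gives length $10$ while the true distance is $9$). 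The resolution is that we are \emph{not} claiming $\Gamma^n$ itself is uniform, only that \emph{some} shortest path is; so the argument must be run so that the choice of which $A$--$B$ path to insert in each $R_i$ is made \emph{once}, at the top, as whichever choice minimizes the resulting total length — the finiteness of $\mathcal P_i$ guarantees such an optimal uniform choice exists, and a shortest path among uniform paths is then compared against $\Gamma^n$ using the projection $Q$ and induction to conclude equality. Handling copies that $\Gamma^n$ enters but does not cross from $A$ to $B$ (dead-end excursions) requires noting that a shortest simple path makes no such excursions, which tidies the case analysis.
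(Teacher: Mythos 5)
Your inductive step rests on a claim that is false, and the counterexample is the paper's own Fig.~\ref{fig:example3}. You project the shortest path $\Gamma^n$ one level down to a path $Q$ in $\Xi^{n-1}$ and assert that minimality of $\Gamma^n$ forces $Q$ to be a shortest path of $\Xi^{n-1}$. In that example $\dist_{\Xi^1}(A,B)=2$, yet the shortest $A$--$B$ path of $\Xi^2$ has length $9$ and crosses the three first-level copies coming from the \emph{other} side of the pentagon, so its projection $Q$ has three arcs and is not shortest in $\Xi^1$. Your proposed contradiction (``substitute a shorter path $Q'$ of $\Xi^{n-1}$ and fill each arc by a shortest $A$--$B$ path of its rule graph'') does not produce anything shorter than $\Gamma^n$: after filling in, the length is the colour-weighted sum $\sum_i \dist_{R_i}(A,B)\,[\boldsymbol{\chi}(Q')]_i$, which is not monotone in the arc count $|Q'|$ --- in the example the two-arc path costs $10$ after substitution while $|\Gamma^2|=9$. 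Since $Q$ need not be shortest, the induction hypothesis cannot be applied to it, and the subsequent bookkeeping $|P^n|=|\Gamma^n|$ has no foundation. Your third paragraph correctly senses this obstacle, but the suggested repair (``make the choice once at the top and compare the best uniform path against $\Gamma^n$ using the projection $Q$ and induction'') still routes through the same unsupported claim; moreover the definition of uniformity allows the per-colour choice to vary from level to level, so optimising over a single choice fixed for all levels is in any case the wrong class.

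The paper avoids this difficulty by decomposing at the opposite end of the hierarchy: it views $\Xi^{n+1}$ not as one substitution applied to $\Xi^{n}$, but as $\Xi^{1}$ in which every $i$-coloured arc has been replaced by a full depth-$n$ copy (the graph grown from a single $i$-coloured arc). Because such a copy meets the rest of the graph only at its two boundary nodes, the portion of any shortest $A$--$B$ path of $\Xi^{n+1}$ inside a copy it crosses is itself a shortest $A$--$B$ path \emph{of that copy}; the induction hypothesis therefore applies directly to each copy, and choosing the same uniform shortest representative in all copies of the same colour yields a uniform path of the same total length as $\Gamma^{n+1}$. No assertion that any projection to a coarser level is shortest is needed --- which is exactly the assertion your top-down route cannot supply. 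So the gap is not a missing computation but the direction of the decomposition itself; to fix your argument you would essentially have to switch to the paper's bottom-up peeling.
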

\begin{proof}
For $n = 1$, the lemma is trivially true,
so assume that some shortest path $\Gamma^n$ in $\Xi^{n}$ is uniform.
Since $\Xi^{n+1}$ can be obtained by substituting every $i$-coloured arc in $\Xi^{1}$ by some $\Xi^n$,
each shortest path $\Gamma^{n+1}$ in $\Xi^{n+1}$
is obtained by concatenating a sequence of paths $\Gamma^n$ in different $\Xi^{n}$.
By choosing the same path $\Gamma^n$ for each $i$-coloured arc,
the resulting shortest path $\Gamma^{n+1}$ will be uniform.
Induction completes the proof.
\end{proof}

\bigskip

Now we focus on calculating the lengths of uniform paths recursively.
Recall from Definition~\ref{def:Path Matrix D} that
$\mathcal{P}_{i}$ is the set of subgraphs of $R_{i}$
whose underlying (undirected) graph is a path between~$A$ and~$B$.
Each element $\mathcal{C}= (P_{1},\ldots,P_{\lambda})\in\mathcal{P}$
is of the Cartesian product 
\[
  \mathcal{P} = \prod_{j=1}^{\lambda} \mathcal{P}_{i}\,.
\]

Note that if $P^n$ is a uniform path between~$A$ and~$B$,
then the subsequent uniform paths $P^{n+1}$ arising from $P^n$
are obtained by choosing an element $\mathcal{C}  \in\mathcal{P}$
and replacing each $i$-coloured edge of $P^{n}$ by the value of each element in $\mathcal{C}$. 
For each tuple
\[
  \mathcal{C} = (P_1, \ldots,P_\lambda )\in\prod_{i=1}^\lambda \mathcal{P}_i\,,
\]
define $\mathbf{D}_{\mathcal{C}}$ to be the matrix
\[
 \mathbf{D}_{\mathcal{C}}=
 \begin{pmatrix}\boldsymbol{\chi}(\mathcal{C}_1)\\[-1mm]\vdots\\\boldsymbol{\chi}(\mathcal{C}_\lambda)\end{pmatrix}\,.
\]
Note that $\mathbf{D}_{\mathcal{C}} \in \mathcal{D}$ by Definition~\ref{def:Path Matrix D}.
\begin{lemma}\label{lem:10.7}
\[ 
  |\Gamma^{n}|=\min_{\mathbf{D}_{\mathcal{C}} \in \mathcal{D}}\Vert \boldsymbol{\chi}(\Xi^0)\prod_{l=1}^{n} \mathbf{D}_{\mathcal{C}}\Vert_1\,.
\]
\end{lemma}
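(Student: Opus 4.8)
The plan is to prove Lemma~\ref{lem:10.7} by combining the structural result Lemma~\ref{lem:Uniform} (that some shortest path is uniform) with a recursive length computation along uniform substitution sequences, and then optimizing over the choice of substitution tuple $\mathcal{C}$. First I would fix $n$ and, using Lemma~\ref{lem:Uniform}, select a uniform shortest path $\Gamma^n$ arising from a sequence $P^0,\ldots,P^n=\Gamma^n$; by definition of uniformity there is for each step $s=0,\ldots,n-1$ a fixed tuple $\mathcal{C}^{(s)}\in\mathcal{P}$ so that every $i$-coloured edge of $P^s$ is replaced by the path $\mathcal{C}^{(s)}_i$ in $R_i$. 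The key bookkeeping observation is then that $\boldsymbol{\chi}(P^{s+1}) = \boldsymbol{\chi}(P^s)\,\mathbf{D}_{\mathcal{C}^{(s)}}$: indeed, the number of $j$-coloured edges in $P^{s+1}$ is obtained by summing, over colours $i$, the number of $i$-coloured edges of $P^s$ times the number of $j$-coloured edges of $\mathcal{C}^{(s)}_i$, which is exactly $[\boldsymbol{\chi}(\mathcal{C}^{(s)}_i)]_j = [\mathbf{D}_{\mathcal{C}^{(s)}}]_{ij}$. Iterating from $P^0$, whose colour vector is $\boldsymbol{\chi}(\Xi^0)$, gives $\boldsymbol{\chi}(P^n)=\boldsymbol{\chi}(\Xi^0)\prod_{l=1}^n \mathbf{D}_{\mathcal{C}^{(l-1)}}$, and the length $|P^n|$ is just the total edge count $\Vert \boldsymbol{\chi}(P^n)\Vert_1$.

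Next I would establish the two inequalities. For the upper bound $|\Gamma^n|\le \min_{\mathbf{D}_{\mathcal{C}}\in\mathcal{D}}\Vert\boldsymbol{\chi}(\Xi^0)\prod_{l=1}^n\mathbf{D}_{\mathcal{C}}\Vert_1$: given any single tuple $\mathcal{C}$, applying it at every level produces a genuine uniform path between $A$ and $B$ in $\Xi^n$ of length $\Vert\boldsymbol{\chi}(\Xi^0)\mathbf{D}_{\mathcal{C}}^{\,n}\Vert_1 = \Vert\boldsymbol{\chi}(\Xi^0)\prod_{l=1}^n\mathbf{D}_{\mathcal{C}}\Vert_1$ (here $\prod_{l=1}^n \mathbf{D}_{\mathcal{C}}$ is just the same matrix used $n$ times, as written in the statement), and since $\Gamma^n$ is a \emph{shortest} path, $|\Gamma^n|$ is at most this value; minimizing over $\mathcal{C}$ gives the bound. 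For the lower bound $|\Gamma^n|\ge \min_{\mathbf{D}_{\mathcal{C}}}\Vert\cdots\Vert_1$: here I use that the shortest path may be taken uniform, so $|\Gamma^n|=\Vert\boldsymbol{\chi}(\Xi^0)\prod_{l=1}^n\mathbf{D}_{\mathcal{C}^{(l-1)}}\Vert_1$ for \emph{some} sequence of tuples; this expression is a product over possibly \emph{different} tuples, whereas the right-hand side of the lemma uses a single repeated tuple. So I need to argue that replacing a mixed product by the best constant product can only decrease the length — equivalently that the minimum over mixed sequences equals the minimum over constant sequences.

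That last reconciliation is the main obstacle, and the way I would handle it is to observe that the right-hand side of the lemma, as literally written, is a minimum over $\mathbf{D}_{\mathcal{C}}\in\mathcal{D}$ of $\Vert\boldsymbol{\chi}(\Xi^0)\prod_{l=1}^n\mathbf{D}_{\mathcal{C}}\Vert_1$, i.e. over constant length-$n$ products; I must show this equals the minimum over all mixed products $\Vert\boldsymbol{\chi}(\Xi^0)\mathbf{D}_{\mathcal{C}^{(0)}}\cdots\mathbf{D}_{\mathcal{C}^{(n-1)}}\Vert_1$. One direction is trivial (constant products are a special case of mixed ones, so $\min_{\text{mixed}}\le\min_{\text{const}}$). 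For the reverse I would argue directly on the combinatorial meaning: a mixed uniform substitution sequence that is length-minimizing must in fact use, at each level and for each colour, a path realizing the minimal distance $\dist_{R_i}(A,B)$-type quantity that propagates, and an exchange/greedy argument shows one can uniformize the choice of tuple across all levels without increasing total length — intuitively because all levels see the same local substitution structure, so the optimal tuple at one level is optimal at every level. I would make this precise by a downward induction on the level at which the sequence first deviates from constant, swapping in the globally optimal tuple and checking via the matrix identity $\boldsymbol{\chi}(P^{s+1})=\boldsymbol{\chi}(P^s)\mathbf{D}_{\mathcal{C}^{(s)}}$ together with monotonicity of $\mathbf{x}\mapsto\Vert\mathbf{x}\mathbf{D}\Vert_1$ in $\mathbf{x}\ge 0$ that the swap does not increase $|\Gamma^n|$. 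This monotonicity, plus the fact that all $\mathbf{D}_{\mathcal{C}}$ have non-negative integer entries, is what makes the exchange argument go through; assembling the two inequalities then yields the claimed formula.
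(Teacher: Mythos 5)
Your bookkeeping in the first two paragraphs --- the identity $\boldsymbol{\chi}(P^{s+1})=\boldsymbol{\chi}(P^s)\,\mathbf{D}_{\mathcal{C}^{(s)}}$, hence $\boldsymbol{\chi}(P^n)=\boldsymbol{\chi}(\Xi^0)\prod_{l=1}^n\mathbf{D}_{\mathcal{C}^{(l-1)}}$, together with Lemma~\ref{lem:Uniform} and the observation that every sequence of tuples is realised by an actual uniform $A$--$B$ path --- is exactly the paper's argument, and it already proves the lemma in the form in which the paper later uses it: the product is intended to run over a sequence of possibly \emph{different} matrices $\mathbf{D}_{\mathcal{C}_l}\in\mathcal{D}$, one per substitution step (the notation is admittedly sloppy). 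This is visible in the proof of Theorem~\ref{thm:minimal Distance}, where the minimum is rewritten as $\min_{\mathbf{D}_\ell\in\mathcal{D}}\rho\bigl(\prod_{\ell=1}^n\mathbf{D}_\ell\bigr)$ and bounded below by Theorem~\ref{thm:Spectral Combinatorial Matrix}, a statement specifically about products of different matrices from $\mathcal{D}$; if the product were just $\mathbf{D}_{\mathcal{C}}^n$, the identity $\rho(\mathbf{D}^n)=\rho(\mathbf{D})^n$ would suffice and that theorem would be unnecessary.

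The genuine gap is your final step. You read the right-hand side literally as a minimum over \emph{constant} products $\Vert\boldsymbol{\chi}(\Xi^0)\mathbf{D}_{\mathcal{C}}^{\,n}\Vert_1$ and propose an exchange/greedy argument showing that the minimum over mixed sequences equals the minimum over constant sequences. That identity is false in general, so no exchange argument can close it. Take $\lambda=2$, $\Xi^0$ a single colour-$1$ arc, $\mathcal{V}_1=\{(3,1),(1,2)\}$, $\mathcal{V}_2=\{(1,10)\}$ (realisable by rule graphs whose $A$--$B$ paths have these colour counts; all matrices in $\mathcal{D}$ and $\mathbf{M}$ are positive, so the system is even primitive). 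For $n=2$: choosing at the first step a matrix with first row $(3,1)$ and at the second step the matrix $\mathbf{D}'$ with rows $(1,2)$ and $(1,10)$ gives $(1,0)\mathbf{D}\mathbf{D}'=(3,1)\mathbf{D}'=(4,16)$, of length $20$, whereas the two constant products give lengths $23$ and $25$. So the shortest (uniform) path has length $20<23=\min_{\mathbf{D}\in\mathcal{D}}\Vert\boldsymbol{\chi}(\Xi^0)\mathbf{D}^2\Vert_1$; under the constant-product reading the lemma itself would be false, and your uniformisation claim fails with it. The intuition "the optimal tuple at one level is optimal at every level" is precisely what breaks: the best choice depends on the colour composition of the current path (here a longer but colour-$1$-heavy path is preferable early on because colour-$1$ arcs are cheaper to expand later), and monotonicity of $\mathbf{x}\mapsto\Vert\mathbf{x}\mathbf{D}\Vert_1$ does not allow the swap. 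The fix is simply to interpret the product as a mixed product over level-dependent tuples; then your first two paragraphs constitute a complete (and more carefully written) version of the paper's proof, and the third paragraph should be deleted.
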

\begin{proof}
Each given tuple $\mathcal{C}= (P_1, \ldots,P_\lambda )$ determines a unique expected uniform path $P^{n+1}$ based on the given uniform path $P^n$.
To determine the length of $P^{n+1}$ explicitly,
recall the notation from Definition~\ref{def:Path Matrix D}.
In particular, 
$|P^{n+1}| = \Vert\mathbf{\boldsymbol{\chi}}(P^{n+1})\Vert_1$
and
\[
    \Vert \mathbf{\boldsymbol{\chi}}(P^{n+1})\Vert_1 
    = \sum_{i=1}^\lambda    \mathbf{\boldsymbol{\chi}}(P^n) \boldsymbol{\chi}(P_{i})^T
    = \Vert \mathbf{\boldsymbol{\chi}}(P^n)\mathbf{D}_{\mathcal{C}}\Vert_1\,.
\]
If we fix $\Xi^0$,
then
\[
     \Vert \mathbf{\boldsymbol{\chi}}(P^{n+1})\Vert_1 
  = \Vert \mathbf{\boldsymbol{\chi}}(\Xi^0)\prod_{l=1}^{n} \mathbf{D}_{\mathcal{C}}\Vert_1\,.
\]
As at least one shortest path $\Gamma^n$ is proved to be uniform,
\[ 
  |\Gamma^{n}| 
  = \min_{\mathbf{D}_{\mathcal{C}} \in \mathcal{D}}\Big\Vert \boldsymbol{\chi}(\Xi^0)\prod_{l=1}^{n} \mathbf{D}_{\mathcal{C}}\Big\Vert_1\,.\qquad
\]
\end{proof}

\begin{theorem}\label{thm:minimal Distance}
\[
| \Gamma^n | \overset{n \to \infty}{\asymp} \rho_{\min}(\mathcal{D})^n \,.
\]
\end{theorem}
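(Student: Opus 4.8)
\textbf{Proof plan for Theorem~\ref{thm:minimal Distance}.}
The goal is to sandwich $|\Gamma^n|$ between two constant multiples of $\rho_{\min}(\mathcal{D})^n$. By Lemma~\ref{lem:10.7} we already have the exact identity
\[
  |\Gamma^n| = \min_{\mathbf{D}_{\mathcal{C}}\in\mathcal{D}}\Big\Vert\boldsymbol{\chi}(\Xi^0)\prod_{l=1}^{n}\mathbf{D}_{\mathcal{C}}\Big\Vert_1\,,
\]
but note this min is taken over a single fixed $\mathbf{D}_{\mathcal{C}}$ used $n$ times, so really $|\Gamma^n| = \min_{\mathbf{D}\in\mathcal{D}}\Vert\boldsymbol{\chi}(\Xi^0)\mathbf{D}^n\Vert_1$. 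The plan is: (i) for the upper bound, pick $\mathbf{D}\in\mathcal{D}_{\min}$, so $\rho(\mathbf{D})=\rho_{\min}(\mathcal{D})$, and apply Lemma~\ref{lem:Norm Limit} (or Lemma~\ref{lem:Norm Range}) with $\mathbf{X}=\mathbf{D}$ and $\mathbf{u}=\boldsymbol{\chi}(\Xi^0)$ — since $\mathbf{D}$ is primitive and $\boldsymbol{\chi}(\Xi^0)$ is positive (the initial arc guarantees at least one positive entry; primitivity of $\mathbf{M}$ and the requirement $\dist_{R_i}(A,B)\ge 2$ can be invoked to push this to strict positivity after one step, or one simply notes Lemma~\ref{lem:Norm Range} only needs a nonzero nonnegative vector up to replacing $\mathbf{u}$ by $\mathbf{u}\mathbf{D}$), giving $\Vert\boldsymbol{\chi}(\Xi^0)\mathbf{D}^n\Vert_1 \le c\,\rho_{\min}(\mathcal{D})^n$. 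Taking the min over $\mathcal{D}$ only makes this smaller, so $|\Gamma^n|\le c\,\rho_{\min}(\mathcal{D})^n$.

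For the lower bound, I would argue as follows. For any $\mathbf{D}\in\mathcal{D}$, Lemma~\ref{lem:Norm Range} gives $\Vert\boldsymbol{\chi}(\Xi^0)\mathbf{D}^n\Vert_1 \ge c_{\mathbf{D}}^{-1}\rho(\mathbf{D})^n \ge c_{\mathbf{D}}^{-1}\rho_{\min}(\mathcal{D})^n$ since every $\mathbf{D}\in\mathcal{D}$ has $\rho(\mathbf{D})\ge\rho_{\min}(\mathcal{D})$ by definition. Because $\mathcal{D}$ is a \emph{finite} set (it has $\prod_i|\mathcal{V}_i|$ elements, each $\mathcal{V}_i$ finite since $R_i$ is finite), we may take $c=\max_{\mathbf{D}\in\mathcal{D}}c_{\mathbf{D}}$, a single constant, so that $\Vert\boldsymbol{\chi}(\Xi^0)\mathbf{D}^n\Vert_1\ge c^{-1}\rho_{\min}(\mathcal{D})^n$ for \emph{every} $\mathbf{D}\in\mathcal{D}$ simultaneously. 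Hence the minimum over $\mathbf{D}\in\mathcal{D}$ is also $\ge c^{-1}\rho_{\min}(\mathcal{D})^n$, i.e.\ $|\Gamma^n|\ge c^{-1}\rho_{\min}(\mathcal{D})^n$. Combining the two bounds yields $|\Gamma^n|\overset{n\to\infty}{\asymp}\rho_{\min}(\mathcal{D})^n$, which is exactly the claim (note $\asymp$ as defined here only requires being trapped between two positive constant multiples, not convergence of the ratio, so this suffices; if convergence of the ratio is wanted, one additionally observes that the min is eventually attained by some fixed $\mathbf{D}^\star\in\mathcal{D}_{\min}$ and applies Lemma~\ref{lem:Norm Limit} to that $\mathbf{D}^\star$).

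The main subtlety — and the reason Theorem~\ref{thm:Spectral Combinatorial Matrix} was proved earlier — is to be sure that the relevant quantity is genuinely $\min_{\mathbf{D}}\Vert\boldsymbol{\chi}(\Xi^0)\mathbf{D}^n\Vert_1$ with one matrix repeated, rather than $\min$ over arbitrary \emph{products} $\mathbf{D}_1\cdots\mathbf{D}_n$ of possibly different matrices from $\mathcal{D}$; the uniformity of the shortest path (Lemma~\ref{lem:Uniform}) is precisely what collapses this to a single repeated matrix, and Theorem~\ref{thm:Spectral Combinatorial Matrix} guarantees that even the non-uniform products cannot beat $\rho_{\min}(\mathcal{D})^n$, so no shorter path can secretly exist. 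Thus the only real work is the two-sided norm estimate above; the finiteness of $\mathcal{D}$ is what makes the constants uniform, and everything else is a direct application of Lemmas~\ref{lem:Norm Range}--\ref{lem:Norm Limit}. I expect no serious obstacle beyond carefully justifying positivity of $\boldsymbol{\chi}(\Xi^0)$ (or its image under one substitution) so that Lemma~\ref{lem:Norm Range} applies, which follows from primitivity of $\mathbf{M}$.
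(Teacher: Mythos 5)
Your upper bound is fine, but your lower bound rests on a misreading of Definition~\ref{def:Uniform}, and this creates a genuine gap. Uniformity only requires that \emph{within} each substitution step $s$ all $i$-coloured edges of $P^s$ are replaced by the same path of $R_i$; it does not force the same choice to be made at different steps. A uniform path therefore corresponds to a sequence of possibly different choices $\mathcal{C}^{(1)},\dots,\mathcal{C}^{(n)}$, so the identity behind Lemma~\ref{lem:10.7} is
\[
|\Gamma^n| \;=\; \min\Bigl\{\bigl\Vert \boldsymbol{\chi}(\Xi^0)\,\mathbf{D}_{\mathcal{C}^{(1)}}\cdots\mathbf{D}_{\mathcal{C}^{(n)}}\bigr\Vert_1 \::\: \mathbf{D}_{\mathcal{C}^{(l)}}\in\mathcal{D}\Bigr\}\,,
\]
and \emph{not} $\min_{\mathbf{D}\in\mathcal{D}}\Vert\boldsymbol{\chi}(\Xi^0)\mathbf{D}^n\Vert_1$ with a single matrix repeated; the latter is only an upper bound for $|\Gamma^n|$. (The notation in Lemma~\ref{lem:10.7} is admittedly suggestive of a repeated matrix, but the paper's own proof of Theorem~\ref{thm:minimal Distance} passes to $\min_{\mathbf{D}_\ell\in\mathcal{D}}\rho\bigl(\prod_\ell\mathbf{D}_\ell\bigr)$ and invokes Theorem~\ref{thm:Spectral Combinatorial Matrix}, which would be pointless if only powers of one matrix occurred.)

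Consequently your lower bound — apply Lemma~\ref{lem:Norm Range} to each of the finitely many $\mathbf{D}\in\mathcal{D}$ and take the worst constant — only controls the constant-choice uniform paths; it says nothing about the exponentially many mixed products, one of which could a priori give a shorter path. Ruling that out is exactly what Theorem~\ref{thm:10.2} and Theorem~\ref{thm:Spectral Combinatorial Matrix} are for: since every $\mathbf{d}_i\in\mathcal{V}_i$ satisfies $\mathbf{d}_i\mathbf{v}_{\min}\ge\rho_{\min}(\mathcal{D})[\mathbf{v}_{\min}]_i$, one has $\mathbf{D}\mathbf{v}_{\min}\ge\rho_{\min}(\mathcal{D})\mathbf{v}_{\min}$ for every $\mathbf{D}\in\mathcal{D}$, hence $\boldsymbol{\chi}(\Xi^0)\mathbf{D}_{\mathcal{C}^{(1)}}\cdots\mathbf{D}_{\mathcal{C}^{(n)}}\mathbf{v}_{\min}\ge\rho_{\min}(\mathcal{D})^n\,\boldsymbol{\chi}(\Xi^0)\mathbf{v}_{\min}$, and dividing by $\Vert\mathbf{v}_{\min}\Vert_\infty$ gives a lower bound on the $1$-norm with a constant independent of the sequence of matrices. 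You cite Theorem~\ref{thm:Spectral Combinatorial Matrix} only as a reassurance that ``no shorter path can secretly exist'' but never carry out this conversion from a spectral-radius statement to a norm bound uniform over all products — and your finiteness-of-$\mathcal{D}$ trick cannot supply it, because the constants in Lemma~\ref{lem:Norm Range} depend on the matrix being powered. That uniform bound over mixed products is the heart of the paper's argument for this theorem; your upper bound and overall framing are otherwise consistent with it.
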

\begin{proof}
By Lemma~\ref{lem:10.7}, Lemma~\ref{lem:Norm Range} and Theorem~\ref{thm:Spectral Combinatorial Matrix},
\begin{align*}
     | \Gamma^{n} |  
     &= \min_{\mathbf{D}_{\mathcal{C}} \in \mathcal{D}}\Vert \boldsymbol{\chi}(\Xi^0)\prod_{l=1}^{n} \mathbf{D}_{\mathcal{C}} \Vert_1 \\
    &\overset{n \to \infty}{\asymp} \min_{\mathbf{D}_{\mathcal{C}}\in \mathcal{D}}  
        \rho\Bigl( \prod_{\ell=1}^{n} \mathbf{D}_{\mathcal{C}}\Bigr) 
    = \min_{{\mathbf{D}_\ell} \in \mathcal{D}}\rho\Bigl(\prod_{\ell=1}^n \mathbf{D}_\ell\Bigr) 
     = \rho_{\min}(\mathcal{D})^n \,.
\end{align*}
\end{proof}

\subsection{Proof of Theorem~\ref{thm:part I}}
\label{Subsection: Proof of fractality}

\begin{definition}{\rm
Let $N^*_\ell(\hat{G})$ be the maximum number of node-disjoint boxes of $\hat{G}$ with diameter~$\ell$. 
That is, $N^*_\ell(\hat{G})$ is the packing number.}
\end{definition}
\begin{lemma}\label{lem:Packing is Less Than Covering}
  $N^*_\ell(\hat{G})\leq N_\ell(\hat{G})$.
\end{lemma}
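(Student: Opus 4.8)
The plan is to carry out the classical packing-versus-covering comparison, realised as an injection from the optimal packing into the optimal covering. Fix a minimal covering $\{V_1,\dots,V_q\}$ of $V(\hat G)$ by boxes of weighted diameter strictly less than $\ell$, so that $q=N_\ell(\hat G)$, and fix a maximal family $\{U_1,\dots,U_p\}$ of node-disjoint boxes of weighted diameter $\ell$, so that $p=N^*_\ell(\hat G)$. It suffices to build an injective map $\sigma\colon\{1,\dots,p\}\to\{1,\dots,q\}$, since then $N^*_\ell(\hat G)=p\le q=N_\ell(\hat G)$.

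First I would pick, for each $i$, a reference vertex $c_i\in V(U_i)$; because the boxes $V_j$ cover $V(\hat G)$, there is an index $\sigma(i)$ with $c_i\in V(V_{\sigma(i)})$, and fixing one such choice for every $i$ defines $\sigma$. The entire argument then reduces to checking that $\sigma$ is injective. Suppose $\sigma(i)=\sigma(i')=k$ with $i\ne i'$. Since $U_i$ and $U_{i'}$ are node-disjoint, $c_i\ne c_{i'}$, and since both lie in $V_k$ we get $\dist_{\hat G}(c_i,c_{i'})\le\Delta(V_k)<\ell$. On the other hand, the fact that $U_i$ and $U_{i'}$ are disjoint boxes realising the packing forces $\dist_{\hat G}(c_i,c_{i'})\ge\ell$: two distinct packing boxes cannot be brought closer than $\ell$ once the reference vertices are chosen consistently with the packing. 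These two inequalities contradict each other, so $\sigma$ is injective and the lemma follows.

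The main obstacle --- indeed essentially the only content --- is the separation estimate $\dist_{\hat G}(c_i,c_{i'})\ge\ell$ between reference vertices of distinct packing boxes; this is where one must use the precise meaning of ``box of diameter $\ell$'' in the packing together with node-disjointness, and it is exactly why the covering boxes are required to have diameter \emph{strictly} less than $\ell$ while the packing boxes are allowed diameter $\ell$, so that the strict inequality $\Delta(V_k)<\ell$ beats the non-strict $\dist_{\hat G}(c_i,c_{i'})\ge\ell$. In the scaled graph $\hat G$ all arc weights equal $1/\Delta(G)$ (or $0$ in the infinite limit), but since the whole argument is phrased through $\dist_{\hat G}$ and $\Delta$, no bookkeeping beyond the metric-space reasoning is required; an equivalent formulation, avoiding the reference vertices, is to observe that a single covering box of diameter $<\ell$ can meet the vertex set of at most one of the packing boxes and then count, which hinges on the very same estimate.
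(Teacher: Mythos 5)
You follow the same route as the paper: pick one representative vertex in each packing box, argue that a covering box of weighted diameter strictly less than $\ell$ can contain at most one representative, and count. The problem is that the step you yourself single out as ``essentially the only content'' --- the separation estimate $\dist_{\hat G}(c_i,c_{i'})\ge\ell$ for representatives of distinct packing boxes --- is asserted rather than proved, and it does not follow from the definition of $N^*_\ell$ as the maximal number of \emph{node-disjoint} boxes of diameter $\ell$. Node-disjointness carries no metric separation: two disjoint boxes of diameter $\ell$ may contain adjacent vertices (two consecutive disjoint subpaths of a long path already show this), so an arbitrary choice of $c_i$ fails, and there are graphs in which \emph{no} choice works. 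For a concrete obstruction, take three disjoint paths $a_im_ib_i$ of length two with unit weights and $\ell=2$, and add edges so that $\{a_1,a_2,a_3,m_1,m_2\}$ and $\{b_1,b_2,b_3,m_3\}$ become cliques while keeping $a_i\not\sim b_i$: the three paths are node-disjoint boxes of diameter $2$, every system of representatives has two members inside one of the two cliques (hence at distance $1$), and the two cliques cover the vertex set by boxes of diameter $1<\ell$, so here $N^*_\ell\ge 3>2\ge N_\ell$. Thus no map $\sigma$ of the kind you construct can be injective in general, and the inequality itself is false under the literal definitions.

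To be fair, this is not a defect you introduced: the paper's own one-line proof opens with ``choose $N^*_\ell(\hat G)$ nodes whose pairwise distances are greater than $\ell$'', which silently replaces ``maximal family of node-disjoint diameter-$\ell$ boxes'' by ``maximal $\ell$-separated vertex set''. For an $\ell$-separated set the counting argument that both you and the paper use is correct and immediate, and an $\ell$-separated set is also what the application in the proof of Theorem~\ref{thm:part I} really produces (one well-chosen vertex per copy of $\Xi^k$ replacing an arc of $\Xi^{n-k}$). So the repair is to redefine $N^*_\ell$ as the maximal cardinality of a vertex set with pairwise distances at least $\ell$, or to add to the packing definition that distinct boxes lie at distance at least $\ell$ from one another; as written, your proof has a genuine gap at its pivotal step.
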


\begin{proof}
Choose $N^*_\ell(\hat{G})$ nodes whose pairwise distances are greater than $\ell$.
Since no subgraph of diameter at most~$\ell$ contains more than one of the covering boxes, 
$N_\ell(\hat{G})\geq N^*_\ell(\hat{G})$.
\end{proof}

\begin{notation}
Let $f(x) \succeq g(x)$ denotes $f(x)\geq c \cdot g(x)$ for some given constant $c>0$. Similar for $f(x) \preceq g(x)$.
Recall that $\Delta(G)$ denotes the diameter of graph $G$.
\end{notation}

\begin{lemma}
\label{lem:Diameter}
  \[
  \Delta(\Xi^n)\overset{n \to \infty}{\asymp} \rho_{\min}(\mathcal{D})^n\,.
  \]
\end{lemma}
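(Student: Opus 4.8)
The plan is to sandwich $\Delta(\Xi^n)$ between two quantities that both grow like $\rho_{\min}(\mathcal{D})^n$. The lower bound is immediate: since $A$ and $B$ are nodes of $\Xi^n$, we have $\Delta(\Xi^n) \geq d_{\Xi^n}(A,B) = |\Gamma^n|$, and by Theorem~\ref{thm:minimal Distance} this is $\asymp \rho_{\min}(\mathcal{D})^n$. So the real content is the upper bound: every pair of nodes in $\Xi^n$ is connected by a path of length $\preceq \rho_{\min}(\mathcal{D})^n$.

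For the upper bound I would argue inductively on the hierarchical structure of $\Xi^n$. Recall $\Xi^n$ is obtained from $\Xi^1$ by substituting each $i$-coloured arc with a copy of $\Xi^{n-1}$ (or, dually, from $\Xi^{n-1}$ by one more round of substitution). Fix any two nodes $u, v \in V(\Xi^n)$. Each lies inside some substituted copy of a rule graph; more precisely, there is a coarsest level at which $u$ and $v$ sit in distinct sibling copies, say copies $C_u$ and $C_v$ that were substituted into arcs $e_u$ and $e_v$ of some intermediate graph $\Xi^m$. One can route from $u$ to the $A$-or-$B$ endpoint of $C_u$ within $C_u$ (cost $\leq \Delta(\Xi^{n-m})$ if $C_u$ is a copy of $\Xi^{n-m}$ — here invoking the induction hypothesis), then travel through $\Xi^m$ along a shortest $\Xi^m$-path between the relevant endpoints of $e_u$ and $e_v$ — but each arc of that $\Xi^m$-path has been blown up into a copy of $\Xi^{n-m}$, so traversing it costs $\leq \Delta(\Xi^{n-m})$ per arc, i.e.\ at most $|\Gamma^m| \cdot \Delta(\Xi^{n-m})$ total — and finally route within $C_v$ to $v$. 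Cleaner still: prove by induction on $n$ the bound $\Delta(\Xi^{n+1}) \leq |\Gamma^1|\cdot\Delta(\Xi^n) + 2\Delta(\Xi^n) = (|\Gamma^1|+2)\Delta(\Xi^n)$ does not directly give the right rate, so one must instead track the diameter through the genuinely recursive estimate $\Delta(\Xi^n) \preceq \max_i \Delta(R_i^{\,\circlearrowleft})$-type bound where each rule arc becomes $\Xi^{n-1}$; the key point is that a path realizing $\Delta(\Xi^n)$ descends to a uniform-type path in the coarse graph whose arc-count is governed by some $\mathbf{D} \in \mathcal{D}$, hence is bounded by $\rho_{\min}(\mathcal{D})^{n}$ up to the Theorem~\ref{thm:Spectral Combinatorial Matrix} inequality and Lemma~\ref{lem:Norm Range}.

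The technically delicate step — and the main obstacle — is making the preceding routing argument yield the \emph{correct exponent} $\rho_{\min}(\mathcal{D})$ rather than something like $\rho(\mathbf{M})$. The naive bound "diameter at level $n$ $\leq$ (number of arcs on a shortest coarse path) $\times$ (diameter at level $n-1$)" telescopes to $\prod_{m} |\Gamma^1| \asymp |\Gamma^1|^n$, which is the right order only if $|\Gamma^1| = \rho_{\min}(\mathcal{D})$ exactly, which need not hold. The fix is to not decompose one level at a time but to recognize that a shortest $A$–$B$ path in $\Xi^n$ is, by Lemma~\ref{lem:Uniform}, uniform, and that an arbitrary pair $u,v$ can be handled by concatenating: (i) a bounded detour to reach the nearest "level-$k$ skeleton" node, costing $O(\Delta(\Xi^k))$ for small fixed $k$, plus (ii) a uniform path across the skeleton whose length is exactly of type $|\Gamma^{n}|$. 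Since $\Delta(\Xi^k)$ is a constant once $k$ is fixed and $|\Gamma^n| \asymp \rho_{\min}(\mathcal{D})^n$ by Theorem~\ref{thm:minimal Distance}, one concludes $\Delta(\Xi^n) \preceq \rho_{\min}(\mathcal{D})^n$. Combined with the trivial lower bound, this gives $\Delta(\Xi^n) \asymp \rho_{\min}(\mathcal{D})^n$, completing the proof. I would expect to need a short combinatorial lemma asserting that every node of $\Xi^n$ is within bounded distance (independent of $n$) of a node lying on some shortest $A$–$B$ path of a recently substituted copy — essentially a statement that the rule graphs have bounded diameter and the substitution does not create "deep pockets" far from all skeleton paths.
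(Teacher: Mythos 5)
Your lower bound ($\Delta(\Xi^n)\geq\dist_{\Xi^n}(A,B)=|\Gamma^n|\asymp\rho_{\min}(\mathcal{D})^n$) is fine and is the same as the paper's. The gap is in the upper bound, precisely at step (ii) of your ``fix''. You claim that two arbitrary nodes of the coarse skeleton $V(\Xi^{n-k})$ are joined by ``a uniform path across the skeleton whose length is exactly of type $|\Gamma^n|$''. But uniform paths (Definition~\ref{def:Uniform}) and the quantity $\Gamma^n$ in Theorem~\ref{thm:minimal Distance} are defined only for the distinguished pair $A,B$; for an arbitrary pair of skeleton nodes no such object exists, and for a fixed $k$ the set $V(\Xi^{n-k})$ still contains nodes created at every level $0,\dots,n-k$, so bounding its diameter inside $\Xi^n$ is essentially the original statement --- step (ii) assumes what is to be proved. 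The estimate you would naturally fall back on (a path in $\Xi^{n-k}$ of length up to $\Delta(\Xi^{n-k})$, each arc crossed at cost $\asymp\rho_{\min}(\mathcal{D})^k$ after blow-up) reintroduces exactly the multiplicative per-level loss you yourself flagged, yielding a bound of the form $c^{n/k}\rho_{\min}(\mathcal{D})^n$. Your anticipated auxiliary lemma (every node is within bounded distance of a shortest $A$--$B$ path of a recently substituted copy) does not repair this, because the hard part is joining the endpoints of far-apart copies, i.e.\ again the skeleton diameter.

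The paper's resolution is a one-level-at-a-time but \emph{additive} telescoping, which you dismissed too quickly: the danger is only in multiplicative recursion. Let $\Delta_{\Xi^n}(V(\Xi^{m}))$ denote the diameter, measured in $\Xi^n$, of the node set already present at level $m$. Every node newly created at level $n-k$ lies inside a single rule-graph copy, hence is within $R=\max_i\Delta(R_i)$ arcs of that copy from a node of $V(\Xi^{n-k-1})$; at scale $n$ each such arc is crossed at cost at most a shortest $A$--$B$ distance of a $k$-fold blow-up, i.e.\ $\preceq\max\{|\Gamma^k|\}$. This gives the additive increment
\[
\Delta_{\Xi^n}\bigl(V(\Xi^{n-k})\bigr)\;\leq\;\Delta_{\Xi^n}\bigl(V(\Xi^{n-k-1})\bigr)+2R\max\{|\Gamma^{k}|\}\,,
\]
and telescoping down to $V(\Xi^0)=\{A,B\}$ yields $\Delta(\Xi^n)\leq|\Gamma^n|+2R\sum_{k=0}^{n-1}\max\{|\Gamma^{k}|\}$. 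By Theorem~\ref{thm:minimal Distance} and Lemma~\ref{lem:Norm Range}, each term is $\asymp\rho_{\min}(\mathcal{D})^k$ with $\rho_{\min}(\mathcal{D})>1$, so the geometric series is $\preceq\rho_{\min}(\mathcal{D})^n$. This is the ingredient missing from your routing argument: detours to older skeletons are charged additively, with the level-$k$ cost being $\asymp\rho_{\min}(\mathcal{D})^k$ rather than a constant factor times the coarse diameter, so no per-level multiplicative loss occurs. With that replacement your outline becomes the paper's proof.
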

\begin{proof}
Let $\Delta_{\Xi^n}(V(\Xi^{n-k}))$ denote 
\[
\sup \{\dist_{\Xi^n}(u,v)\,:\, u\in V(\Xi^{n-k}),v\in V(\Xi^{n-k})\}\,.
\]
Note that if $k_1\geq k_2$,
then $V(\Xi^{n-k_1}) \subset V(\Xi^{n-k_2})$, so 
\[
  \Delta_{\Xi^n}(V(\Xi^{n-k_1})) \leq \Delta_{\Xi^n}(V(\Xi^{n-k_2}))\,.
\]
Observe that
\begin{align*}
&\quad\,\Delta_{\Xi^n}(V(\Xi^{n-k}))\\
&\leq \Delta_{\Xi^n}(V(\Xi^{n-k-1}))+2\max\{ \dist_{\Xi^n}(u,v)\::\: u\in V(\Xi^{n-k-1}),v\in V^*(\Xi^{n-k}) \} \\    
&\leq \Delta_{\Xi^n}(V(\Xi^{n-k-1}))+2\max_{i}\Delta(R_{i})|\Gamma^{k}| \,.
\end{align*}
Let $R=\max_{i}\Delta(R_{i})$.
Then
\begin{align*}
\Delta(\Xi^n) &\leq \Delta_{\Xi^n}(V(\Xi^{n-1})) +2R \\
&\leq \Delta_{\Xi^n}(V(\Xi^{n-2}))+2R|\Gamma^1|+2R  \\
&\leq \cdots \leq \Delta_{\Xi^n}(V(\Xi^{0}))+2R|\Gamma^{n-1}|+\cdots+2R|\Gamma^1|+2R \,.
\end{align*}
Note that here, $\Gamma^n$ in fact refer to various shortest path given different initial graphs.
However, Theorem~\ref{thm:minimal Distance} sheds light on this problem: all $\Gamma^n$ are asymptotically equivalent without regard to some constant times.
Hence, we say the right-hand side above
\begin{align*}
\text{RHS} 
&\leq \max\{|\Gamma^n|\}+2R\sum_{j=0}^{n-1}\max\{|\Gamma^{j}|\}  \\
&\leq 2R \sum_{j=0}^{n}\max\{|\Gamma^{j}|\} \\
& \preceq \sum_{j=0}^{n}\max\{|\Gamma^{j}|\}\,.
\end{align*}
By Lemma~\ref{lem:Norm Range}, 
$\rho(\mathbf{D}_{\min})^n \preceq \Vert \mathbf{x}\mathbf{D}^n_{\min}\Vert_1 \preceq \rho(\mathbf{D}_{\min})^n$, as a result
\begin{align*}
      \rho_{\min}(\mathcal{D})^n
 \preceq |\Gamma^n| 
&\leq \Delta(\Xi^n) \\
&\preceq \sum_{j=0}^{n}\max\{|\Gamma^{j}|\}
\preceq \sum_{j=0}^{n}\rho_{\min}(\mathcal{D})^j
\preceq\rho_{\min}(\mathcal{D})^n\,.
\end{align*} \qedhere

\end{proof}

\noindent

\bigskip
We now prove Theorem~\ref{thm:part I}.
\begin{proof}
By Lemma~\ref{lem:Packing is Less Than Covering},
\[
N_L^* (\Xi^n) \leq N_L (\Xi^n)\,.
\]
Set $L=\rho_{\min}(\mathcal{D})^{n-k}$.
We can easily find a finite constant $C$ depending on random iterated graph systems only such that $N_L^*(\Xi^n) \succeq |E(\Xi^{k})|$ in that every arc in $E(\Xi^{n-k})$ turns into a $\Xi^{k}$ in $\Xi^n$;
Also, $N_L(\Xi^n) \preceq |E(\Xi^{k})|$ since constant number of subgraphs with diameter $L$ can cover $\Xi^{n-k}$.
Therefore,
\[
|E(\Xi^k)|\preceq N_L^* (\Xi^n) \leq N_L (\Xi^n) \preceq |E(\Xi^k)|\,.
\]
Recall Theorem~\ref{lem:Arc Growth}, providing that
\[
N_L(\Xi^n) 
\overset{n \to \infty}{\asymp}
|E(\Xi^k)|
\overset{n \to \infty}{\asymp}
\rho(\mathbf{M})^{k} \,.
\]
Then 
\begin{align*}
\dim_B (\Xi)&=
\lim_{\ell \to 0} \frac{\log N_\ell(\hat{\Xi}^n)}{-\log \ell} \\
&=\lim_{n \to \infty} \frac{\log N_L(\Xi^n)}{-\log \frac{L}{\Delta(\Xi^n)}} \\
&=\lim_{n \to \infty} \frac{\log \rho(\mathbf{M})^k}{-\log \frac{\rho_{\min}(\mathcal{D})^{n-k}}{ \rho_{\min}(\mathcal{D})^n}}
=\dfrac{\log\rho(\mathbf{M})}{\log \rho_{\min}(\mathcal{D})}\,.
\end{align*}
\end{proof}



\subsection{Existence for the limited graphs of deterministic iterated graph systems}
\label{subsection: Existence}
This section aims to prove that Hausdorff dimension is consistent with Minkowski dimension for random iterated graph systems.

Let $\dist^H(X,Y)$ be the Hausdorff distance between two subsets $X$ and $Y$ on a metric space~\cite{Hau20}.
For any two metrics spaces $X$ and $Y$, define 
\[
\dist^{GH}=\inf\{\dist_\varrho^H (X,Y) \::\: \varrho \text{ is a admissible metric on } X\cup Y \}
\]
be the Gromov–Hausdorff distance in~\cite{Gro07}.
\begin{theorem}
$\Xi$ is the Gromov–Hausdorff limit of substitution networks sequence.
That is,
\[
\lim_{n \to \infty} \dist^{GH}(\hat{\Xi}^n,\hat{\Xi})=0\,.
\]
\end{theorem}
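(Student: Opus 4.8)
The plan is to realise $\hat\Xi$ explicitly as a metric completion of an increasing union $\bigcup_n V(\hat\Xi^n)$ equipped with a limiting metric, and then to show the natural inclusions $\hat\Xi^n \hookrightarrow \hat\Xi$ are $\epsilon$-isometries with $\epsilon\to 0$. First I would observe that, because each substitution only subdivides arcs and never deletes nodes, we have a canonical chain of vertex inclusions $V(\Xi^0)\subset V(\Xi^1)\subset\cdots$, and for a fixed pair of nodes $u,v$ appearing in $\Xi^{n_0}$ the scaled distances $\dist_{\hat\Xi^n}(u,v) = \dist_{\Xi^n}(u,v)/\Delta(\Xi^n)$ form a sequence indexed by $n\ge n_0$. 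The key quantitative input is that this sequence is Cauchy: I would show $|\dist_{\hat\Xi^{n+1}}(u,v) - \dist_{\hat\Xi^n}(u,v)|$ decays geometrically. Indeed, a shortest path in $\Xi^{n+1}$ between $u,v$ restricted to the ``coarse'' skeleton behaves like a shortest path in $\Xi^n$ with each arc blown up; the discrepancy between the true $\Xi^{n+1}$-distance and the naively-substituted path is controlled, via the uniform-path machinery of Lemma~\ref{lem:Uniform} and Theorem~\ref{thm:minimal Distance}, by $O(\Delta(\Xi^{n-k}))$ for the relevant scale $k$, and after dividing by $\Delta(\Xi^{n+1})\asymp\rho_{\min}(\mathcal D)^{n+1}$ this gives a bound $\preceq \rho_{\min}(\mathcal D)^{-c\,n}$ by Lemma~\ref{lem:Diameter}. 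Summing the geometric tail gives a well-defined limit metric $\dist_{\hat\Xi}$ on $\bigcup_n V(\hat\Xi^n)$, and $\hat\Xi$ is its completion; one checks the metric axioms pass to the limit and that $\Delta(\hat\Xi)=1$.

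With $\hat\Xi$ so defined, I would then exhibit the explicit correspondence witnessing $\dist^{GH}(\hat\Xi^n,\hat\Xi)\to 0$. Take the relation $\mathcal C_n\subset V(\hat\Xi^n)\times \hat\Xi$ that pairs each node of $\hat\Xi^n$ with itself viewed inside $\hat\Xi$; its distortion is $\sup\{|\dist_{\hat\Xi^n}(u,v)-\dist_{\hat\Xi}(u,v)| : u,v\in V(\hat\Xi^n)\}$, which is $\preceq \rho_{\min}(\mathcal D)^{-c\,n}\to 0$ by the Cauchy estimate above (the tail from level $n$ onwards). It remains to check $\mathcal C_n$ is ``almost surjective'': every point of $\hat\Xi$ is within $\epsilon_n\to 0$ of some node of $\hat\Xi^n$. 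This follows because any new node born at level $m>n$ lies on a substituted copy of some $\Xi^{m-n}$ inside an arc of $\Xi^n$, hence within weighted distance $\le \Delta(\Xi^{m-n})\cdot\mu(e) = \Delta(\Xi^{m-n})/\Delta(\Xi^m)\asymp\rho_{\min}(\mathcal D)^{-n}$ of an endpoint of that arc, which is a node of $\hat\Xi^n$; letting $m\to\infty$ and taking completions, the whole space $\hat\Xi$ is covered by the $\rho_{\min}(\mathcal D)^{-n}$-neighbourhood of $V(\hat\Xi^n)$. Combining, $\dist^{GH}(\hat\Xi^n,\hat\Xi)\le \tfrac12(\text{distortion}) + \epsilon_n \preceq \rho_{\min}(\mathcal D)^{-c\,n}\to 0$.

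The main obstacle, and where I would spend the most care, is the geometric-decay estimate for $|\dist_{\hat\Xi^{n+1}}(u,v)-\dist_{\hat\Xi^n}(u,v)|$ — equivalently, showing that shortest paths at consecutive levels do not drift by more than a vanishing fraction of the diameter. As the discussion around Fig.~\ref{fig:example3} stresses, shortest paths at level $n$ and level $n+1$ can be ``greatly independent'', so one cannot simply track a single path through the substitution. The right tool is the same decomposition used in Lemma~\ref{lem:Diameter}: express $\dist_{\Xi^{n+1}}(u,v)$ by peeling off scales $V(\Xi^{n+1-k})$ and bounding each correction term by $2\max_i\Delta(R_i)\,|\Gamma^{k}|\asymp\rho_{\min}(\mathcal D)^k$; the crucial point is that the relevant corrections for comparing level $n$ to level $n+1$ occur only at the \emph{finest} scales, contributing a geometric series whose ratio to $\Delta(\Xi^{n+1})$ is $\rho_{\min}(\mathcal D)^{-n}$ up to constants. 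Once this summability is in hand the rest is bookkeeping: the inclusions are coherent, the limit metric exists, and the $\mathcal C_n$ are $\epsilon_n$-isometries, giving the stated Gromov–Hausdorff convergence. Making the constants in these $\asymp$ estimates genuinely uniform in $n$ (rather than silently absorbing $n$-dependent factors) is the subtle bookkeeping one must not skip.
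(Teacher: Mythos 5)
Your overall architecture (limit metric on $\bigcup_n V(\hat{\Xi}^n)$, explicit correspondences of small distortion, almost-surjectivity because every node born at level $m>n$ lies within $\preceq \rho_{\min}(\mathcal{D})^{-n}$ of a node of $V(\hat{\Xi}^n)$) is sound and is, in spirit, the same as the paper's route, which establishes convergence of the rescaled distances between fixed nodes and then invokes pointed Gromov--Hausdorff convergence of balls plus compactness and the uniform diameter bound $\Delta(\hat{\Xi}^n)=1$. The almost-surjectivity half of your correspondence argument is correct as stated.

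The gap is in the step you yourself flag as the crux: the claimed geometric decay of $|\dist_{\hat{\Xi}^{n+1}}(u,v)-\dist_{\hat{\Xi}^n}(u,v)|$ does not follow from the tools you cite. Lemma~\ref{lem:Uniform}, Theorem~\ref{thm:minimal Distance} and Lemma~\ref{lem:Diameter} only give two-sided comparability, $c^{-1}\rho_{\min}(\mathcal{D})^{n}\preceq |\Gamma^n|,\Delta(\Xi^n)\preceq c\,\rho_{\min}(\mathcal{D})^{n}$, which bounds the rescaled distances but gives neither a rate nor even plain convergence. Moreover, the peeling decomposition of Lemma~\ref{lem:Diameter} controls only the \emph{additive} corrections $2\max_i\Delta(R_i)|\Gamma^k|$ coming from newly created nodes; it does not touch the actual source of drift for two \emph{fixed} old nodes $u,v\in V(\Xi^{n_0})$. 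Indeed, since a simple path between old nodes crosses each substituted copy from endpoint to endpoint, one has exactly $\dist_{\Xi^{n_0+m}}(u,v)=\min_{P\subset\Xi^{n_0}}\boldsymbol{\chi}(P)\cdot\boldsymbol{\gamma}^{(m)}$, where $\gamma^{(m)}_i$ is the cost of crossing an $m$-fold substituted $i$-coloured arc; the increments of the rescaled distance are therefore governed by the convergence of the \emph{direction} of $\boldsymbol{\gamma}^{(m)}/\rho_{\min}(\mathcal{D})^m$ — equivalently, a Perron-type/projective contraction statement for the row-wise minimum operator $\mathbf{x}\mapsto\min_{\mathbf{D}\in\mathcal{D}}\mathbf{D}\mathbf{x}$, whose eigenvector is $\mathbf{v}_{\min}$ by Theorem~\ref{thm:10.2} — and this is exactly the colour re-optimisation phenomenon of Fig.~\ref{fig:example3} that your ``corrections occur only at the finest scales'' heuristic does not address. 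Without such a contraction (or at least a proof of convergence of the normalised crossing costs), your Cauchy estimate, and hence the construction of the limit metric and the distortion bound for $\mathcal{C}_n$, is asserted rather than proved; to be fair, the paper's own proof of Equation~(\ref{eq:distance}) is also terse on precisely this point, but your proposal leans on a strictly stronger quantitative claim than the cited machinery can deliver.
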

\begin{proof}
First we show the existence
\begin{equation}\label{eq:distance}
\lim_{n\to \infty} \dist_{\hat{\Xi}^n}(v_1,v_2) = c
\end{equation}
where $c\in [0,1]$ for any $v_1\in V(\Xi^{k_1})\subset V(\Xi^n)$ and $v_2\in V(\Xi^{k_2})\subset V(\Xi^n)$.
This is essentially because that $\Gamma^n \asymp \Delta(\Xi^n) \asymp \rho_{\min}(\mathcal{D})^n$, otherwise there won't be a limit.
We thus have, for a constant $c>0$
\begin{align*}
\dist_{\hat{\Xi}^n}(v_1,v_2) 
&\overset{n \to \infty}{\asymp} 
\frac{\diam(G^{k_2})|\Gamma^{n-k_2}|}{\diam(\Xi^n)} \\
&\overset{n \to \infty}{\asymp} 
\frac{\rho_{\min}(\mathcal{D})^{k_2}\rho_{\min}(\mathcal{D})^{n-k_2}}{ \rho_{\min}(\mathcal{D})^n}
=1\,.
\end{align*}
As the distance in any $\hat{\Xi}$ is always less or equal to $1$, we obtain the result above by $\displaystyle\lim_{n\to \infty} \dist_{\hat{\Xi}^n}(v_1,v_2)=c\leq 1$.

$\hat{\Xi}$ naturally constructs a metric space $\bigl(V(\hat{\Xi}),\dist_{\hat{\Xi}}\bigr)$.
A sequence of pointed metric spaces $(X_n,x_n)$ is said to converge to $(Y,y)$ if for any $\varepsilon>0$
\[
\lim_{n\to \infty} \dist^{GH}\bigl(B_{X_n}(x_n,\varepsilon),B_Y(y,\varepsilon)\bigr)=0\,.
\]

Note that $V(\Xi)=\bigcup_{i=0}^\infty V(\Xi^n)$ where $V(\Xi^0)\subset V(\Xi^1) \subset \cdots$ is a nested set.
We then claim that for any $v\in V(\Xi^n)$
\[
\lim_{n\to \infty} \dist^{GH}\bigl(B_{\hat{\Xi}^n}(v,\varepsilon),B_{\hat{\Xi}}(v,\varepsilon)\bigr)=0 \,.
\]
This is because by Equation (\ref{eq:distance}) all distances between nodes in the box $B_{\hat{\Xi}^n}(v,\varepsilon)$ converge. 
In other words, $B_{\hat{\Xi}^n}(v,\varepsilon)$ and $B_{\hat{\Xi}}(v,\varepsilon)$ are isometric when $n$ goes to infinity under some admissible metric related to the graph distance.

Therefore every node $v\in V(\hat{\Xi}^n)$ is Gromov–Hausdorff convergent to $v$ in $\hat{\Xi}$. 
Finally provided that every $\hat{\Xi}^n$ is compact (finite graphs) with uniformly bounded diameter $1$, the desired result is obtained.

\end{proof}

\subsection{Proof of Theorem~\ref{thm:Part II}}\label{Subsection:Proof of Hausdorff}
This subsection aims to demonstrate a result for the Hausdorff dimension of $\Xi$.
It can be done by showing $\Xi$ is quasi self-similar and then applying Falconer's implicit theorem~\cite{falconer1989dimensions}.
However, since this subsection will define a slightly different Hausdorff dimension for graphs, we are to present a full proof for the desired result.

\cite{Fed14,Rog98} have sufficiently investigated Hausdorff measure and dimension on metric spaces.
We now introduce a definition of Hausdorff dimension for graphs.
\begin{definition}
Define Vertex Hausdorff measure for graph $G$
\[
\mathcal{HV}^s(G)= \lim_{\delta \to 0}\inf \left\{ \sum_{i=1}^{\infty} \Delta(U_i)^s:\{U_i\} \text{ is a } \delta_\mathcal{V}\text{-cover of } V(\hat{G}) \right\}\,,
\]
Then set $\dim_{HV} (G)=\inf\{s\geq 0:\mathcal{HV}^s(G)=0\}$.
Here $\{U_i\}$ is a $\delta_\mathcal{V}$-cover for $V(\hat{G})$ if 
\[
V(\hat{G})\subset\bigcup_{i=1}^\infty V(U_i)\,.
\]

\smallskip

Define Edge Hausdorff measure for graphs that
\[
\mathcal{HE}^s(G)=\lim_{\delta \to 0}\inf \left\{ \sum_{i=1}^{\infty} \Delta(U_i)^s:\{U_i\} \text{ is a } \delta_\mathcal{E}\text{-cover of } E(\hat{G}) \right\}\,.
\]
Then set $\dim_{HE}(G)=\inf\{s\geq 0:\mathcal{HE}^s(G)=0\}$.
Here $\{U_i\}$ is a $\delta_\mathcal{E}$-cover for $E(\hat{G})$ if 
\[
E(\hat{G})\subset\bigcup_{i=1}^\infty E(U_i)\,.
\]

Let the Hausdorff dimension for graphs be
\[
\dim_H(G)=\max\Bigl\{\dim_H \bigl(V(G)\bigr),\dim_H \bigl(E(G)\bigr) \Bigr\} \,.
\]
\end{definition}

\begin{remark}\label{remark:Hausdorff}{\rm
There is no inequality between $\dim_H \bigl(V(G)\bigr)$ and $\dim_H \bigl(E(G)\bigr)$.

An example is $\Xi$.
Choosing any $n$, there is always $|(V(\Xi^n)|\geq|(E(\Xi^n)|$;
However, $V(\Xi)$ is countable while $E(\Xi)$ is uncountable.
Hence, we have $\dim_{HV}(\Xi)=0$, but uncountably many arcs imply that $\dim_{HE}(\Xi)>0$.
Though this is very much a straight fact, it is occasionally called the paradox of the binary tree.}
\end{remark}

\begin{theorem}
\[
  \dim_H (\Xi) = \dim_B (\Xi) = \frac{\log \rho(\mathbf{M})}{\log\rho_{\min}(\mathcal{D})}\,.
\]
\end{theorem}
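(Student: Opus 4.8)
The plan is to prove the two inequalities $\dim_H(\Xi) \le \dim_B(\Xi)$ and $\dim_H(\Xi) \ge \dim_B(\Xi)$ separately, since the value of $\dim_B(\Xi)$ has already been identified as $\frac{\log\rho(\mathbf{M})}{\log\rho_{\min}(\mathcal{D})}$ in Theorem~\ref{thm:part I}. By Remark~\ref{remark:Hausdorff} we have $\dim_{HV}(\Xi)=0$, so $\dim_H(\Xi)=\dim_{HE}(\Xi)$ and the whole argument reduces to estimating the Edge Hausdorff measure of $\Xi$. The upper bound $\dim_H(\Xi) \le \dim_B(\Xi)$ is the easy direction and follows the standard fact that Hausdorff dimension never exceeds box dimension: for the scaled graph $\hat{\Xi}^n$, the $|E(\Xi^{n-k})|$-many copies of $\hat{\Xi}^k$ sitting inside it, each rescaled to diameter $\asymp \rho_{\min}(\mathcal{D})^{-k}$ (using Lemma~\ref{lem:Diameter}), form a $\delta_\mathcal{E}$-cover of $E(\hat{\Xi})$ with $\delta \to 0$ as $k\to\infty$; summing $\Delta(U_i)^s$ over this cover gives $\asymp |E(\Xi^{n-k})|\cdot\rho_{\min}(\mathcal{D})^{-ks} \asymp \rho(\mathbf{M})^{n-k}\rho_{\min}(\mathcal{D})^{-ks}$ (Lemma~\ref{lem:Arc Growth}), which stays bounded precisely when $s \ge \frac{\log\rho(\mathbf{M})}{\log\rho_{\min}(\mathcal{D})}$, whence $\mathcal{HE}^s(\Xi)<\infty$ for such $s$ and the upper bound follows.

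The lower bound $\dim_H(\Xi) \ge \dim_B(\Xi)$ is the substantive part. The natural route is to exhibit a mass distribution (a Borel probability measure) $\nu$ supported on $E(\hat{\Xi})$ — viewed as the boundary of the substitution tree of arcs — and to verify the mass-distribution principle: if $\nu(U) \preceq \Delta(U)^s$ for every small subgraph $U$, with $s = \frac{\log\rho(\mathbf{M})}{\log\rho_{\min}(\mathcal{D})}$, then $\mathcal{HE}^s(\Xi)>0$. The measure is built from the primitivity of $\mathbf{M}$: assign to the cylinder consisting of all arcs of $\hat\Xi$ born inside a fixed arc $e$ of $\Xi^n$ a mass comparable to $\rho(\mathbf{M})^{-n}$, using the Perron–Frobenius structure of $\mathbf{M}$ to make this consistent across levels (this is where Lemma~\ref{lem:Norm Range} and the asymptotics $|E(\Xi^n)|\asymp\rho(\mathbf{M})^n$ enter). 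Then, given an arbitrary small subgraph $U$ of diameter $\delta$, choose $n$ so that $\rho_{\min}(\mathcal{D})^{-n}\asymp\delta$, i.e. $U$ has diameter comparable to that of a level-$n$ copy; the point is that such a $U$ can meet only a bounded number of level-$n$ cylinders, because inside $\Xi^n$ a set of diameter $\Delta(\Xi^n)/\rho_{\min}(\mathcal{D})^n \asymp \Delta(\Xi^n)/\Delta(\Xi^n)$... more precisely, a box of diameter $\asymp\rho_{\min}(\mathcal{D})^{-n}$ in $\hat\Xi$ corresponds to a box of diameter $O(1)$ in $\Xi^n$, which by Lemma~\ref{lem:Diameter} intersects only $O(1)$ arcs of $\Xi^n$. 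Hence $\nu(U) \preceq \rho(\mathbf{M})^{-n} \asymp (\rho_{\min}(\mathcal{D})^{-n})^{s} \asymp \delta^s$, which is exactly the bound needed.

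The main obstacle is the geometric estimate underpinning the lower bound: controlling how many level-$n$ sub-copies a subgraph of diameter $\asymp\rho_{\min}(\mathcal{D})^{-n}$ can intersect. This is genuinely a metric statement about $\hat\Xi$ rather than a combinatorial one about the tree, and it hinges on the fact — encoded in Theorem~\ref{thm:Spectral Combinatorial Matrix} and Theorem~\ref{thm:minimal Distance} — that the minimal distance $|\Gamma^n|$ and the diameter $\Delta(\Xi^n)$ grow at the \emph{same} rate $\rho_{\min}(\mathcal{D})^n$, so that no ``shortcut'' through deeper levels can make a small box straddle many copies. I would isolate this as a lemma: there is a constant $K$ such that for all $n$, any subset of $V(\Xi^n)$ of $\Xi^n$-diameter at most $\rho_{\min}(\mathcal{D})^{-n}\Delta(\Xi^n)$ is contained in at most $K$ of the sub-copies of $\Xi^k$ corresponding to arcs of $\Xi^{n-k}$ — proved by iterating Lemma~\ref{lem:Diameter} and the bounded-diameter property $\dist_{R_i}(A,B)\ge 2$. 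Once that lemma is in hand, both the mass-distribution estimate and the passage to the limit $\Xi$ (using that $\hat{\Xi}^n\to\hat\Xi$ in Gromov–Hausdorff, already established) are routine, and combining the two inequalities with Theorem~\ref{thm:part I} yields $\dim_H(\Xi)=\dim_B(\Xi)=\frac{\log\rho(\mathbf{M})}{\log\rho_{\min}(\mathcal{D})}$.
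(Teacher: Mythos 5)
Your overall strategy matches the paper's: reduce to the edge Hausdorff dimension via Remark~\ref{remark:Hausdorff}, treat the upper bound $\dim_H(\Xi)\le\dim_B(\Xi)$ as the easy direction (the paper simply records it as immediate; your cylinder-cover computation is the standard justification, up to a harmless index slip between $k$ and $n-k$), and derive the lower bound from the coincidence of growth rates $|\Gamma^n|\asymp\Delta(\Xi^n)\asymp\rho_{\min}(\mathcal{D})^n$ (Theorem~\ref{thm:minimal Distance}, Lemma~\ref{lem:Diameter}). The difference is in how the lower bound is implemented. You build an explicit Perron--Frobenius cylinder measure on $E(\hat\Xi)$ and invoke the mass distribution principle; the paper constructs no measure at all. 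It takes an arbitrary $\delta_\mathcal{E}$-cover $\{U_i\}$ of $E(\hat\Xi^n)$, chooses $k$ so that $\max\{|\Gamma^k|\}$ is comparable to $\Delta(U_i)\Delta(\Xi^n)$, encloses $U_i$ in a ball $\tau_i$ claimed to lie inside a single copy of $\Xi^{k+c}$, deduces $|V(U_i)|\le|V(\Xi^{k+c})|\preceq\bigl(\Delta(U_i)\Delta(\Xi^n)\bigr)^{s}$ with $s=\frac{\log\rho(\mathbf{M})}{\log\rho_{\min}(\mathcal{D})}$, and then sums over the cover and normalises by $\Delta(\Xi^n)^{s}\asymp\rho(\mathbf{M})^{n}\asymp|V(\Xi^n)|$ (Lemma~\ref{lem:Node Growth}) to conclude $\mathcal{HE}^{s}(\hat\Xi)\succeq 1$. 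In effect the paper runs the mass distribution principle with the normalised vertex-counting measure, so the two arguments purchase the same conclusion; yours is closer to the classical IFS formalism and makes the measure and its level-consistency explicit, while the paper's avoids constructing a limit measure at the cost of an assertion about containment of $\tau_i$ in a coarser sub-copy.

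One caution about the lemma you isolate. In the form ``any set of diameter at most $\rho_{\min}(\mathcal{D})^{-n}\Delta(\Xi^n)$ meets at most $K$ level-$n$ sub-copies'' it is not literally true, because vertex degrees in $\Xi^n$ are in general unbounded: already in the paper's running example the node $A$ has degree $2$ in $R_1$, so a gluing node inherited from an early generation is incident to a number of cylinders that grows with $n$, and even a single point (diameter $0$) meets all of them. What the mass estimate actually requires is not a count of copies met but a bound on the mass (or vertex count) that a small set can accumulate, including near such high-degree gluing nodes; cylinders met only in a tiny neighbourhood of the gluing node contribute only a small fraction of their mass, and your reduction ``$\nu(U)\le K\cdot\max$ cylinder mass'' discards exactly this refinement. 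The paper's corresponding step --- the claim that $\tau_i$ is a subgraph of some $\Xi^{k+c}$ with $c$ independent of $n$ --- faces the same issue at high-degree gluing nodes and is likewise asserted without a detailed argument, so your proposal is at the same level of rigour as the paper here; but if you carry it out, restate the key lemma as a containment/mass estimate rather than a bound on the number of sub-copies intersected. With that adjustment, combining your two bounds with Theorem~\ref{thm:part I} gives the stated equality.
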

\begin{proof}
By Theorem~\ref{thm:part I},
\[
  \dim_B(\Xi)=\frac{\log \rho(\mathbf{M})}{\log\rho_{\min}(\mathcal{D})}\,.
\]
By Remark~\ref{remark:Hausdorff}
\[
\dim_H(\Xi)=\dim_{HE}(\Xi)\,.
\]
It is trivial to check that
\[
  \dim_H(\Xi) \leq \dim_B(\Xi)\,.
\]
Hence, we only need to calculate the lower bound.

Fix $n$ and consider $\Xi^n$. 
Let $\{U_i\}$ be a $\delta$-cover of $E(\hat{\Xi}^n)$.
Then set
\[
k=\inf\{ s \::\: \max \{|\Gamma^s|\}\geq \Delta(U_i)\Delta(\Xi^n)  \}\,.
\]
Accordingly, as Theorem~\ref{thm:minimal Distance} points out the growth of $|\Gamma^k|$, we can also have
\[
\max \{|\Gamma^k|\}\preceq \Delta(U_i)\Delta(\Xi^n)\,.
\]
Picking any node $v_{U_i}\in U_i$, let
\[
\tau_i=\bigl\{ v\::\:\dist_{\Xi^n}(v,v_{U_i})\leq \max\{|\Gamma^k|\} \bigr\} \,.
\]
Then it is clear that $U_i\subset \tau_i$ and thus $|V(U_i)| \leq |V(\tau_i)|$.
Given that $\Delta(\tau_i)\Delta(\Xi^n)\leq 2 \max\{|\Gamma^k|\}$, we can always find a constant $c$ (not depending on $n$) such that $\tau_i$ is a subgraph of some $\Xi^{k+c}$.
This is because substitution networks can be reckoned as a composition of many $\Xi^{k+c}$ (if $n$ is large enough). 

As a result,
\[
U_i \subset \tau_i \subset \Xi^{k+c}\subset \Xi^n
\]
and hence $|V(U_i)| \leq |V(\Xi^{k+c})|$.
Therefore,
\begin{align*}
\bigl(\Delta(U_i)\Delta(\Xi^n)\bigr)^{\frac{\log \rho(\mathbf{M})}{\log\rho_{\min}(\mathcal{D})}} 
&\succeq \max\{|\Gamma^k|\}^{\frac{\log \rho(\mathbf{M})}{\log\rho_{\min}(\mathcal{D})}} \\ 
&\succeq \Delta(\Xi^k)^{\frac{\log \rho(\mathbf{M})}{\log\rho_{\min}(\mathcal{D})}} \\
&\succeq \Delta(\Xi^{k+c})^{\frac{\log \rho(\mathbf{M})}{\log\rho_{\min}(\mathcal{D})}} \\
&\succeq |V(\Xi^{k+c})| 
\geq |V(U_i)|
\end{align*}
by Lemmas~\ref{thm:minimal Distance} and Lemma~\ref{lem:Diameter}.

Finally, because $\{U_i\}$ is a covering, by Lemma~\ref{lem:Node Growth},
\begin{align*}
\mathcal{HE}^{\frac{\log \rho(\mathbf{M})}{\log\rho_{\min}(\mathcal{D})}}(\hat{\Xi}) 
&= \lim_{n\to \infty}\sum_{i=1}^{\infty} \Delta(U_i)^{\frac{\log \rho(\mathbf{M})}{\log\rho_{\min}(\mathcal{D})}} \\
&=\lim_{n\to \infty}\sum_{i=1}^{\infty} \bigg(\frac{\Delta(U_i)\Delta(\Xi^n)}{\Delta(\Xi^n)}\bigg)^{\frac{\log \rho(\mathbf{M})}{\log\rho_{\min}(\mathcal{D})}}\\
&\succeq\lim_{n\to \infty}\frac{\sum_{i=1}^\infty |V(U_i)|}{\Delta(\Xi^n)^{\frac{\log \rho(\mathbf{M})}{\log\rho_{\min}(\mathcal{D})}}}\\
&\succeq\lim_{n\to \infty} \frac{ |V(\Xi^n)|}{\Delta(\Xi^n)^{\frac{\log \rho(\mathbf{M})}{\log\rho_{\min}(\mathcal{D})}}}
\geq\lim_{n\to \infty} \frac{\rho(\mathbf{M})^n}{\rho(\mathbf{M})^n} 
=1\,.
\end{align*}

Hence,
\[
\dim_H(\Xi) = \dim_B(\Xi)=\frac{\log \rho(\mathbf{M})}{\log\rho_{\min}(\mathcal{D})},
\]
which completes the proof of Theorem~\ref{thm:Deterministic Substitution}.
\end{proof}

\section{Random iterated graph systems and  proof of Theorem~\ref{thm:Random Substitution}}\label{sec:Random substitution}

\subsection{Random iterated graph systems}\label{sec:random iterated graph systems}
\begin{definition}
A triple $\mathscr{R}=(\Xi^0,\mathcal{R},\mathcal{Q})$ is called a {\em random iterated graph system} if
\begin{gather*}
\Xi^0 \text{ is a finite directed graph,} \\
\mathcal{R}=\{\{R_{ij}\}_{j=1}^{q_i}\}_{i=1}^{\lambda} \text{ is a family of directed graphs where } q_i,\lambda\in\mathbb{N} \\ 
\text{and } \mathcal{Q}=\{(p_{i1},\dots,p_{iq_i})\}_{i=1}^{\lambda} \text{ is a set of probability vectors.} 
\end{gather*}
\end{definition}
See Fig.~\ref{fig:rule-example2} for example.
Recall the number of colours is $\lambda \in \mathbb{N}$.
For each $i=1,\ldots,\lambda$, set positive integers $q_i$, which is the number of possible rule graphs of colour $i$. 
Each directed rule graph $R_{ij}$ has a node $A$ and a node $B$ that respectively replace the beginning node $A$ and ending node $B$ of $e$;
this will determine exactly how $R_{ij}$ replaces~$e$.
In addition, in this paper we always require $\dist_{R_{ij}}(A,B)\geq 2$ for all $i$ and $j$.

To construct a sequence of growing graphs, start from the initial graph $\Xi^0$, which is usually just a single arc.
We construct $\Xi^1$ by replacing all $k$-coloured arc in $\Xi^0$ randomly by a graph in $\mathcal{R}$ according to some probability in $\mathcal{Q}$.
Note here, as stated above, the substitutions regarding node $A$ and $B$ are unique.
We then iteratively replace all $k$-coloured arcs for all $k$ in $\Xi^1$ to obtain $\Xi^2$.
In this way, denote the graph after $t\in \mathbb{N}$ iterations by $\Xi^n$;
accordingly, we have a sequence of graphs $\{\Xi^i\}_{i=0}^n$.
For simplicity, write $\mathcal{G}^\infty$ for the collection of all possible $\Xi$.

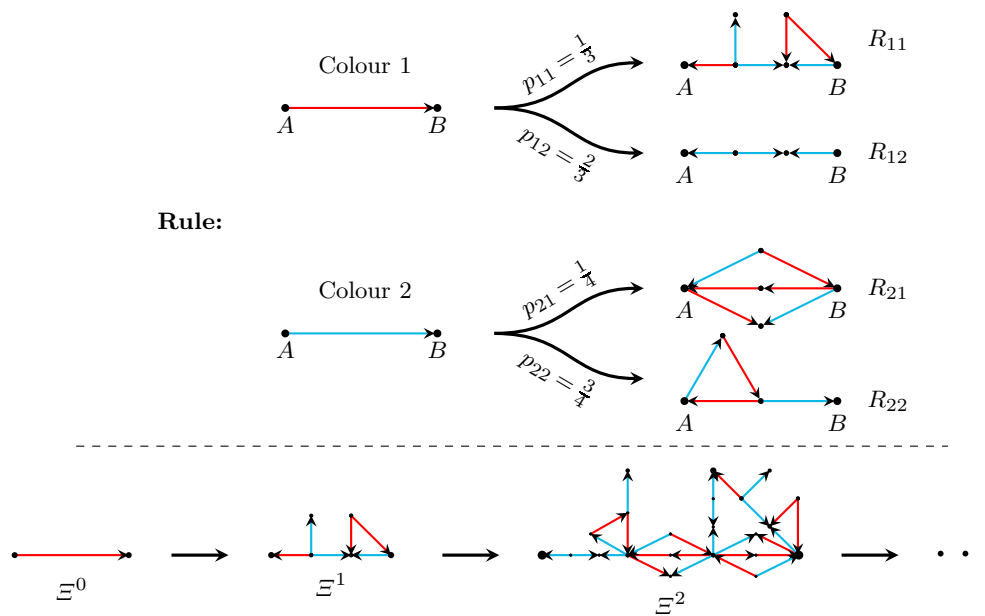
\begin{figure}[ht]
\centering
\begin{tikzpicture}[scale=1,shorten >= 1pt]
  \begin{scope}[scale=1,shift={(0,2)}]
  \draw (-2.5,-1.5) node {\normalsize \textbf{Rule:}};
    \redarc{-1.25,0}{.75,0}
    \draw   (-.2,.8) node[below] {Colour 1};
    \draw  (-1.25,0) node[below] {\footnotesize $A$};
    \draw  (  .75,0) node[below] {\footnotesize $B$};
    \draw[very thick,>=stealth,->] (1.5,0) .. controls (2.5,0) and (2.5, .6) .. (3.5, .6) node[sloped,midway,above]{\footnotesize$p_{11} = \frac{1}{3}$};
    \draw[very thick,>=stealth,->] (1.5,0) .. controls (2.5,0) and (2.5,-.6) .. (3.5,-.6) node[sloped,midway,below]{\footnotesize$p_{12} = \frac{2}{3}$};
    \Raa{4, .57}{.67}{0}{\ABnodes\draw (4, .5) node {\makebox(0,0){$R_{11}$}};}
    \Rab{4,-.6 }{.67}{0}{\ABnodes\draw (4,0  ) node {\makebox(0,0){$R_{12}$}};}
  \end{scope}
  \begin{scope}[scale=1,shift={(0,-1)}]
    \bluearc{-1.25,0}{.75,0} 
    \draw   (-.2,.8) node[below] {Colour 2};
    \draw   (-1.25,0) node[below] {\footnotesize $A$};
    \draw   (  .75,0) node[below] {\footnotesize $B$};
    \draw[very thick,>=stealth,->] (1.5,0) .. controls (2.5,0) and (2.5, .6) .. (3.5, .6) node[sloped,midway,above]{\footnotesize$p_{21} = \frac{1}{4}$};
    \draw[very thick,>=stealth,->] (1.5,0) .. controls (2.5,0) and (2.5,-.6) .. (3.5,-.6) node[sloped,midway,below]{\footnotesize$p_{22} = \frac{3}{4}$};
    \Rba{4,  .6}{.67}{0}{\ABnodes\draw (4,0) node {\makebox(0,0){$R_{21}$}};}
    \Rbb{4,-.9 }{.67}{0}{\ABnodes\draw (4,0) node {\makebox(0,0){$R_{22}$}};}
    \draw[dashed] (-4,-1.5) -- (7.5,-1.5);
  \end{scope}
\end{tikzpicture}
\begin{tikzpicture}[scale=.75]
  \redarc{0,0}{2,0}
  \draw (1,-.25) node[below] {$\Xi^0$};
  \draw[very thick,>=stealth,->] (2.75,0) --++ (1,0);
  \Raa{4.5,0}{.7}{0}{\draw (1.5,-.25) node[below] {$\Xi^1$};}
  \draw[very thick,>=stealth,->] (7.5,0) --++ (1,0);
  \begin{scope}[scale=1,shift={(7.75,0)}]
    \Rab{1.5,0  }{.5}{0}{}
    \Rba{3  ,0  }{.5}{0}{}
    \Rba{6  ,0  }{.5}{180}{}
    \Rbb{3  ,0  }{.5}{90}{}
    \Rab{4.5,1.5}{.5}{270}{}
    \Raa{4.5,1.5}{.7}{315}{}
    \fill[black] (1.5,0) circle (2.25pt);
    \fill[black] (6  ,0) circle (2.25pt);
    \draw (3.75,-.5) node[below] {$\Xi^2$};
  \end{scope}
  \draw[very thick,>=stealth,->] (14.5,0) --++ (1,0);
  \draw (16.75,0) node(){\huge$\cdots$};
\end{tikzpicture}
\caption{An example of random iterated graph system}
\label{fig:rule-example2}
\end{figure}
\subsection{Relevant definitions}
Please note that the definitions and notations for random iterated graph systems are different from deterministic iterated graph systems.

\begin{definition}
A finite set containing $k \times k$ primitive matrices $\mathcal{X}=\{\mathbf{X}_1,\ldots,\mathbf{X}_m\}$ where every matrix is with a probability vector $(p_1,\ldots,p_m)$ is called a \textbf{random matrices set}.
\end{definition}

\begin{definition}
Let 
\[
\mathcal{M}=\Bigl\{\mathbf{M}=
\begin{pmatrix}
\chi(R_{1 j_1})\\
\cdots \\
\chi(R_{\lambda j_\lambda})
\end{pmatrix}
\::\: j_i = 1,\ldots,q_i \Bigr\}
\]
with $\mathbb{P}(\mathbf{M})=\prod_{i=1}^\lambda p_{ij_i}$.
Clearly $\sum_{\mathbf{M}\in \mathcal{M}} \mathbb{P}(\mathbf{M}) =1$, and hence $\mathcal{M}$ is a random matrices set with $|\mathcal{M}|=\prod_{i=1}^\lambda q_i$ elements.

\end{definition}

\begin{definition}
Let $P_{ij}$ be a path between $A$ and $B$ in $R_{ij}$, and $\mathcal{P}_{ij}$ be the collection of all possible $P_{ij}$ in $\mathcal{R}_{ij}$.
\[
\mathcal{D}=\Bigl\{\mathbf{D}=
\begin{pmatrix}
\chi(P_{1 j_1})\\
\cdots \\
\chi(P_{\lambda j_\lambda})
\end{pmatrix}
\::\:  j_i = 1,\ldots,q_i \Bigr\}
\]
By the definition, $\mathcal{D}$ is a set containing $|\mathcal{D}|=\prod_{i=1}^\lambda q_i$ matrices with
\[
\mathbb{P}\Bigl(
\mathbf{D}=
\begin{pmatrix}
\chi(P_{1 k_1}) \\
\cdots \\
\chi (P_{\lambda k_\lambda})
\end{pmatrix} 
\Big| \mathbf{D} \in \mathcal{D}
\Bigr)
=\prod_{i=1}^\lambda p_{ik_i}\,.
\]
In this way, $\mathcal{D}$ is a random matrices set.

Let
\[
\mathscr{D}=
\Bigl\{
\mathcal{D}\::\: P_{ij_i} \in \mathcal{P}_{ij_i}, \quad i=1,\ldots,\lambda
\Bigr\}\,.
\]
In other words, $\mathscr{D}$ is a collection of all possible random matrices sets $\mathcal{D}$.
Moreover, 
\[
|\mathscr{D}|=\prod_{i=1}^\lambda \prod_{j=1}^{q_i} |\mathcal{P}_{ij}|\,.
\]

If all matrices in $\mathcal{M}$ and $\mathcal{D}\in\mathscr{D}$ are primitive (or positive), we say the random iterated graph $\mathscr{R}$ system is {\bf primitive}.
\end{definition}

\begin{notation}
Let $\mathcal{L}(\mathcal{X})$ be the (maximal) Lyapunov exponent of random matrices set $\mathcal{X}$ with probability vector.
We will strictly define $\mathcal{L}(*)$ in subsection~\ref{subsec:Stochastic Lyapunov}.
For notational convenience, write
\[
\mathcal{L}_{\min} (\mathscr{D}):=
\min_{\mathcal{D}\in \mathscr{D}} \mathcal{L}(\mathcal{D})\,.
\]
\end{notation}

\begin{theorem}[main theorem]\label{thm:Random Substitution}
Given a random primitive iterated graph system and one graph limit $\Xi\in\mathcal{G}^\infty$, we have
\[
\mathbb{P} \Big(
\dim_B (\Xi) =  \dim_H (\Xi)
= \frac{ \mathcal{L}(\mathcal{M})}{ \min_{\mathcal{D}\in\mathscr{D}} \mathcal{L}(\mathcal{D})}
\Big)=1 \,.
\]
\end{theorem}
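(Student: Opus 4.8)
The plan is to mirror the deterministic argument of Theorem~\ref{thm:Deterministic Substitution}, replacing the spectral radius $\rho(*)$ by the top Lyapunov exponent $\mathcal{L}(*)$ and every deterministic asymptotic equivalence $\asymp$ by an almost-sure one. The first block of work is to establish the random analogues of the growth lemmas of Subsection~``Growth rates of arcs and nodes''. By Furstenberg--Kesten (applicable because every $\mathbf{M}\in\mathcal{M}$ is primitive, hence a suitable positivity/contraction hypothesis holds on the cone $\mathbb{R}_+^\lambda$), for almost every realisation the random product $\mathbf{M}_1\cdots\mathbf{M}_n$ satisfies $\tfrac1n\log\Vert\boldsymbol{\chi}(\Xi^0)\mathbf{M}_1\cdots\mathbf{M}_n\Vert_1 \to \mathcal{L}(\mathcal{M})$ a.s.; since $\boldsymbol{\chi}(\Xi^n)=\boldsymbol{\chi}(\Xi^{n-1})\mathbf{M}_n$ with $\mathbf{M}_n$ the (random) arc-substitution matrix chosen at step $n$, this gives $|E(\Xi^n)|=e^{(\mathcal{L}(\mathcal{M})+o(1))n}$ and, by the same summation trick as in Lemma~\ref{lem:Node Growth}, $|V(\Xi^n)|=e^{(\mathcal{L}(\mathcal{M})+o(1))n}$ almost surely. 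One subtlety here: the matrix $\mathbf{M}_n$ driving the $n$-th substitution is not i.i.d. across colours independently at each level in the naive way --- each arc of a given colour is substituted by an independently chosen rule --- but $\boldsymbol{\chi}(\Xi^n)$ only tracks colour counts, and the expected colour-count transition is governed by the random matrix set $\mathcal{M}$; care is needed to see that the relevant stochastic process is a genuine i.i.d.\ product of matrices from $\mathcal{M}$ (or to use a law-of-large-numbers argument on colour multiplicities directly). I would isolate this as a preliminary lemma.

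Next I would redo the distance estimates of Subsection~``Distance growth between nodes'' in the random setting. The uniform-path lemma (Lemma~\ref{lem:Uniform}) goes through verbatim: at least one shortest path $\Gamma^n$ between $A$ and $B$ in $\Xi^n$ is uniform, and its length is $\min$ over choices of path-tuples of $\Vert\boldsymbol{\chi}(\Xi^0)\prod_{\ell=1}^n \mathbf{D}_\ell\Vert_1$ where now the $\mathbf{D}_\ell$ range over the random matrices drawn from a fixed $\mathcal{D}\in\mathscr{D}$. The key structural input that replaces Theorem~\ref{thm:Spectral Combinatorial Matrix} is a ``random Collatz--Wielandt'' statement: for each fixed $\mathcal{D}\in\mathscr{D}$ whose matrices are primitive, the a.s.\ growth rate $\lim \tfrac1n\log\Vert\boldsymbol{\chi}(\Xi^0)\mathbf{D}_1\cdots\mathbf{D}_n\Vert_1$ equals $\mathcal{L}(\mathcal{D})$; and moreover --- this is the crucial point --- the a.s.\ value of $\min_{\mathcal{D}\in\mathscr{D}}$ of these products, after taking logs and dividing by $n$, equals $\min_{\mathcal{D}\in\mathscr{D}}\mathcal{L}(\mathcal{D})=\mathcal{L}_{\min}(\mathscr{D})$. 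Since $\mathscr{D}$ is finite, exchanging the finite minimum with the a.s.\ limit is routine once each individual limit holds a.s.; the only real content is ruling out that mixing matrices across different $\mathcal{D}$'s within a single realisation could do better, and here the analogue of Theorem~\ref{thm:10.2}/Theorem~\ref{thm:Spectral Combinatorial Matrix} (the special cone structure forcing $\mathbf{d}_i\mathbf{v}_{\min}\geq\rho[\mathbf{v}_{\min}]_i$) must be lifted to a statement about Lyapunov exponents and the common invariant cone direction. This gives $|\Gamma^n| = e^{(\mathcal{L}_{\min}(\mathscr{D})+o(1))n}$ a.s., and then the diameter bound (Lemma~\ref{lem:Diameter}) $\Delta(\Xi^n)=e^{(\mathcal{L}_{\min}(\mathscr{D})+o(1))n}$ a.s.\ follows by the same telescoping as in the deterministic proof, using that all the $|\Gamma^j|$ appearing (with their various random initial graphs) have the same exponential rate a.s.

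With those two ingredients, the proof of the theorem is a soft repackaging of the deterministic arguments for Theorems~\ref{thm:part I} and~\ref{thm:Part II}. For the Minkowski dimension: setting $L=\Delta(\Xi^{n-k})$-scale $\approx e^{\mathcal{L}_{\min}(\mathscr{D})(n-k)}$, the packing/covering sandwich $|E(\Xi^k)|\preceq N_L^*(\Xi^n)\le N_L(\Xi^n)\preceq |E(\Xi^k)|$ holds because each arc of $\Xi^{n-k}$ inflates into a copy of a depth-$k$ substitution graph whose diameter and vertex count have the a.s.\ exponential rates just established; taking $\log N_L(\Xi^n)/(-\log(L/\Delta(\Xi^n)))$ and letting $k,n\to\infty$ appropriately yields $\dim_B(\Xi)=\mathcal{L}(\mathcal{M})/\mathcal{L}_{\min}(\mathscr{D})$ almost surely. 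For the Hausdorff dimension: the upper bound $\dim_H\le\dim_B$ is automatic, and the lower bound repeats the covering argument of the proof of Theorem~\ref{thm:Part II} --- given a $\delta$-cover $\{U_i\}$ of $E(\hat\Xi^n)$, enlarge each $U_i$ to a ball $\tau_i$ of radius $\asymp\max\{|\Gamma^k|\}$, observe $\tau_i$ sits inside some depth-$(k+c)$ substitution subgraph, and bound $(\Delta(U_i)\Delta(\Xi^n))^{s}\succeq|V(U_i)|$ with $s=\mathcal{L}(\mathcal{M})/\mathcal{L}_{\min}(\mathscr{D})$, so that $\sum_i\Delta(U_i)^s\succeq |V(\Xi^n)|/\Delta(\Xi^n)^s\to 1$ a.s.\ by the vertex-growth lemma. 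Throughout one intersects the countably many a.s.\ events (one per each of the finitely many $\mathcal{D}\in\mathscr{D}$, plus the $\mathcal{M}$-event, plus the countable collection of cover-estimate events indexed by $n$) to get a single probability-one event.

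The main obstacle I expect is the second step: proving that, almost surely, no realisation of the substitution process can achieve a shortest-path growth rate strictly below $\mathcal{L}_{\min}(\mathscr{D})$, i.e.\ the Lyapunov-exponent analogue of Theorem~\ref{thm:Spectral Combinatorial Matrix}. In the deterministic case this rested on the Collatz--Wielandt characterisation and the peculiar ``each admissible row dominates $\rho(\mathbf{D}_{\min})$ against the Perron vector'' property of the path-matrix family; reproducing this requires either a stationary analogue of the Perron--Frobenius direction (a measurable invariant cone section, as in the Oseledets/Hennion theory for products of positive matrices) or a direct subadditivity argument showing $\mathcal{L}$ of any ``mixed'' product is bounded below by $\mathcal{L}_{\min}(\mathscr{D})$. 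Establishing that the minimising configuration is itself one of the finitely many fixed $\mathcal{D}\in\mathscr{D}$ (rather than something genuinely time-varying) is the delicate point, and it is exactly the hypothesis that $\mathcal{D}$ is built from path-characteristic vectors --- not arbitrary matrices --- that makes it true, just as the Remark after Theorem~\ref{thm:Spectral Combinatorial Matrix} warns.
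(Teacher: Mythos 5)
Your plan is essentially the paper's own proof: almost-sure arc and node growth at rate $\exp(\mathcal{L}(\mathcal{M}))^n$, almost-sure shortest-path and diameter growth at rate $\exp(\mathcal{L}_{\min}(\mathscr{D}))^n$, and then a rerun of the deterministic covering and mass-distribution arguments of Theorems~\ref{thm:part I} and~\ref{thm:Part II}. The two subtleties you single out are exactly where the paper places its machinery: the independence issue in the growth lemmas is absorbed into the ``stochastic substitution system'' formalism (Definition~\ref{def:Stochastic substitution System}, Theorems~\ref{thm:Lyapunov}--\ref{thm:Stochastic Substitution System}), while the shortest-path lower bound is not treated by any Lyapunov analogue of Theorem~\ref{thm:Spectral Combinatorial Matrix} --- the paper simply reduces to a fixed $\mathcal{D}\in\mathscr{D}$ via the uniform-path Lemma~\ref{lem:Uniform} and takes the finite minimum in Lemma~\ref{lem:RandomGamma}, so the ``main obstacle'' you flag is handled no more elaborately in the paper than in your outline.
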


\begin{remark}
    For any network with a given algebraic number fractal dimension, we can always directly construct a random iterated graph system that possesses the same dimension. 
\end{remark}

\begin{example}
\[
\mathcal{M}=
\biggl\{
\begin{pmatrix}   
3 & 3 \\
4 & 2
\end{pmatrix}
,
\begin{pmatrix}   
3 & 3 \\
2 & 2
\end{pmatrix}
,
\begin{pmatrix}   
0 & 3 \\
4 & 2
\end{pmatrix}
,
\begin{pmatrix}   
0 & 3 \\
2 & 2
\end{pmatrix}
\biggr\}\,,
\]
respectively with the probability vector $(\frac{1}{12},\frac{1}{4},\frac{1}{6},\frac{1}{2})$.
The Lyapunov exponent 
\[
\mathcal{L}(\mathcal{M})\approx1.4488\,.
\]

Hence, in this example, $\mathscr{D}$ contains $|\mathcal{P}_{11}|\times|\mathcal{P}_{12}|\times|\mathcal{P}_{21}|\times|\mathcal{P}_{22}|=2\times1\times3\times2=12$ random matrices sets.
\begin{align*}
&\qquad\mathscr{D}= \\
&\Biggl\{
\biggl\{
\begin{pmatrix}   
1 & 2 \\
1 & 1
\end{pmatrix}
,
\begin{pmatrix}   
1 & 2 \\
1 & 1
\end{pmatrix}
,
\begin{pmatrix}   
0 & 3 \\
1 & 1
\end{pmatrix}
,
\begin{pmatrix}   
0 & 3 \\
1 & 1
\end{pmatrix}
\biggr\}\,,
\biggl\{
\begin{pmatrix}   
3 & 1 \\
1 & 1
\end{pmatrix}
,
\begin{pmatrix}   
3 & 1 \\
1 & 1
\end{pmatrix}
,
\begin{pmatrix}   
0 & 3 \\
1 & 1
\end{pmatrix}
,
\begin{pmatrix}   
0 & 3 \\
1 & 1
\end{pmatrix}
\biggr\}\,,\\
&\biggl\{
\begin{pmatrix}   
3 & 1 \\
2 & 0
\end{pmatrix}
,
\begin{pmatrix}   
3 & 1 \\
1 & 1
\end{pmatrix}
,
\begin{pmatrix}   
0 & 3 \\
2 & 0
\end{pmatrix}
,
\begin{pmatrix}   
0 & 3 \\
1 & 1
\end{pmatrix}
\biggr\}\,,
\biggl\{
\begin{pmatrix}   
1 & 2 \\
2 & 0
\end{pmatrix}
,
\begin{pmatrix}   
1 & 2 \\
1 & 1
\end{pmatrix}
,
\begin{pmatrix}   
0 & 3 \\
2 & 0
\end{pmatrix}
,
\begin{pmatrix}   
0 & 3 \\
1 & 1
\end{pmatrix}
\biggr\}\,,
\cdots
\Biggr\}
\end{align*}
all with the probability vector $(\frac{1}{12},\frac{1}{4},\frac{1}{6},\frac{1}{2})$.
The Lyapunov exponents of elements show in $\mathscr{D}$ above are $\{0.8717,0.9474,0.9705,0.8900,\cdots\}$ respectively.
Finding the smallest Lyapunov exponent in $\mathscr{D}$, it is \[
\mathcal{L}_{\min}(\mathscr{D})\approx0.8717 \,.
\]

By Theorem~\ref{thm:Random Substitution}, we have the theoretical dimension
\[
\dim_B(\Xi)\overset{a.e.}{=}\frac{\mathcal{L}(\mathcal{M})}{\mathcal{L}_{\min}(\mathscr{D})}\approx\frac{1.4488}{0.8717}=1.6620\,.
\]
Also, modelling the limit by the volume-greedy ball-covering algorithm (VGBC) as given by Wang et al.~\cite{WaWaXiChWaBaYuZh17},
we obtain the simulated values given in Fig.~\ref{fig:10 simulations}.
It shows 10 times of simulating Minkowski dimensions of $\Xi^5$, providing a estimated Minkowski dimension~$1.4275$.

\begin{figure}
\centering
\pgfplotstableread{
X1	Y1	X2	Y2	X3	Y3	X4	Y4	X5	Y5	X6	Y6	X7	Y7	X8	Y8	X9	Y9	X10	Y10
2	0.1483	2	0.1446	2	0.1464	2	0.1423	2	0.1513	2	0.1478	2	0.1407	2	0.1469	2	0.1465	2	0.1487
3	0.0942	3	0.093	3	0.084	3	0.0923	3	0.0811	3	0.0853	3	0.0906	3	0.0952	3	0.094	3	0.0855
4	0.0697	4	0.0659	4	0.0641	4	0.0663	4	0.0666	4	0.0613	4	0.0665	4	0.0707	4	0.0678	4	0.0632
5	0.0463	5	0.0434	5	0.0441	5	0.0423	5	0.0472	5	0.0445	5	0.0462	5	0.0449	5	0.0459	5	0.0458
6	0.0368	6	0.0387	6	0.0316	6	0.0365	6	0.04	6	0.0373	6	0.0356	6	0.0395	6	0.0364	6	0.0347
7	0.0329	7	0.0326	7	0.0316	7	0.0308	7	0.0375	7	0.0337	7	0.0337	7	0.0354	7	0.0321	7	0.0335
8	0.0256	8	0.0231	8	0.02	8	0.024	8	0.023	8	0.0228	8	0.027	8	0.0218	8	0.0204	8	0.0248
9	0.0217	9	0.019	9	0.0183	9	0.0192	9	0.0194	9	0.018	9	0.025	9	0.0204	9	0.0175	9	0.0235
10	0.0173	10	0.0156	10	0.0133	10	0.0154	10	0.0169	10	0.0156	10	0.0202	10	0.0177	10	0.0153	10	0.0173
11	0.0145	11	0.0129	11	0.0116	11	0.0135	11	0.0157	11	0.0144	11	0.0154	11	0.0163	11	0.0124	11	0.0161
12	0.0128	12	0.0122	12	0.0108	12	0.0125	12	0.0133	12	0.0132	12	0.0154	12	0.0122	12	0.0102	12	0.0136
13	0.0106	13	0.0095	13	0.0108	13	0.0125	13	0.0133	13	0.012	13	0.0154	13	0.0136	13	0.0102	13	0.0124
14	0.01	14	0.0095	14	0.0092	14	0.0115	14	0.0085	14	0.0096	14	0.0116	14	0.0109	14	0.0095	14	0.0099
15	0.0084	15	0.0088	15	0.0092	15	0.0106	15	0.0085	15	0.0096	15	0.0106	15	0.0109	15	0.0087	15	0.0112
16	0.0089	16	0.0088	16	0.0083	16	0.0077	16	0.0085	16	0.0084	16	0.0096	16	0.0095	16	0.0087	16	0.0099
17	0.0072	17	0.0088	17	0.0083	17	0.0077	17	0.0085	17	0.0084	17	0.0087	17	0.0082	17	0.0087	17	0.0099
18	0.0072	18	0.0095	18	0.0083	18	0.0067	18	0.0085	18	0.0084	18	0.0077	18	0.0082	18	0.008	18	0.0087
19	0.0072	19	0.0095	19	0.0058	19	0.0038	19	0.0073	19	0.0084	19	0.0058	19	0.0082	19	0.0066	19	0.0074
20	0.0072	20	0.0075	20	0.0058	20	0.0038	20	0.0048	20	0.006	20	0.0058	20	0.0082	20	0.0051	20	0.0062
21	0.0056	21	0.0068	21	0.005	21	0.0038	21	0.0048	21	0.006	21	0.0058	21	0.0068	21	0.0044	21	0.005
22	0.0056	22	0.0061	22	0.005	22	0.0038	22	0.0048	22	0.006	22	0.0048	22	0.0068	22	0.0044	22	0.005
23	0.0056	23	0.0048	23	0.005	23	0.0038	23	0.0048	23	0.006	23	0.0039	23	0.0068	23	0.0044	23	0.005
24	0.0045	24	0.0048	24	0.005	24	0.0038	24	0.0048	24	0.0048	24	0.0039	24	0.0054	24	0.0044	24	0.005
25	0.0045	25	0.0048	25	0.005	25	0.0038	25	0.0048	25	0.0048	25	0.0039	25	0.0054	25	0.0044	25	0.005
26	0.0039	26	0.0054	26	0.005	26	0.0038	26	0.0036	26	0.0048	26	0.0039	26	0.0041	26	0.0044	26	0.005
27	0.0039	27	0.0048	27	0.005	27	0.0029	27	0.0036	27	0.0036	27	0.0039	27	0.0041	27	0.0044	27	0.005
28	0.0039	28	0.0048	28	0.005	28	0.0029	28	0.0036	28	0.0036	28	0.0039	28	0.0041	28	0.0036	28	0.005
29	0.0039	29	0.0041	29	0.005	29	0.0019	29	0.0024	29	0.0036	29	0.0039	29	0.0041	29	0.0036	29	0.005
30	0.0033	30	0.0034	30	0.0042	30	0.0019	30	0.0024	30	0.0024	30	0.0039	30	0.0041	30	0.0036	30	0.005

}\mytable
\begin{tikzpicture}[scale=0.8]
    \begin{axis}[
        xmode=log,
        ymode=log,
        xmin = 1.5, xmax = 40,
        ymin = 0.001, ymax = 1,
        width = 0.9\textwidth,
        height = 0.675\textwidth,
        grid = both,
        minor tick num = 1,
        major grid style = {lightgray},
        minor grid style = {lightgray!25},
        xlabel = {$L$},
        ylabel = {$\frac{N_L(\Xi^n)}{|V(\Xi^n)|}$},
        legend cell align = {left},
        legend pos = north east
        ]
        \addlegendentry{Estimated Minkowski dimension is $1.4275$};
        \addplot[color=blue,mark=sqaure,only marks] 
        table[x = X1, y = Y1] {\mytable};
        \addplot[thick, black] table[
            x = X1,
            y = {create col/linear regression={y=Y1}}
            ] {\mytable};
        \addplot[color=red,mark=square,only marks] table[x = X2, y = Y2] {\mytable};
        \addplot[thick, black] table[ x = X2, y = {create col/linear regression={y=Y2}} ] {\mytable};
        \addplot[color=green,mark=square,only marks] table[x = X3, y = Y3] {\mytable};
        \addplot[thick, black] table[ x = X3, y = {create col/linear regression={y=Y3}} ] {\mytable};
        \addplot[color=cyan,mark=square,only marks] table[x = X4, y = Y4] {\mytable};
        \addplot[thick, black] table[ x = X4, y = {create col/linear regression={y=Y4}} ] {\mytable};
        \addplot[color=magenta,mark=square,only marks] table[x = X5, y = Y5] {\mytable};
        \addplot[thick, black] table[ x = X5, y = {create col/linear regression={y=Y5}} ] {\mytable};
        \addplot[color=yellow,mark=square,only marks] table[x = X6, y = Y6] {\mytable};
        \addplot[thick, black] table[ x = X6, y = {create col/linear regression={y=Y6}} ] {\mytable};
        \addplot[color=violet,mark=square,only marks] table[x = X7, y = Y7] {\mytable};
        \addplot[thick, black] table[ x = X7, y = {create col/linear regression={y=Y7}} ] {\mytable};
        \addplot[color=gray,mark=square,only marks] table[x = X8, y = Y8] {\mytable};
        \addplot[thick, black] table[ x = X8, y = {create col/linear regression={y=Y8}} ] {\mytable};
        \addplot[color=purple,mark=square,only marks] table[x = X9, y = Y9] {\mytable};
        \addplot[thick, black] table[ x = X9, y = {create col/linear regression={y=Y9}} ] {\mytable};
        \addplot[color=brown,mark=square,only marks] table[x = X10, y = Y10] {\mytable};
        \addplot[thick, black] table[ x = X10, y = {create col/linear regression={y=Y10}} ] {\mytable};
    \end{axis}
\end{tikzpicture}
\caption{$10$ simulations of fractality when $n=5$}
\label{fig:10 simulations}
\end{figure}
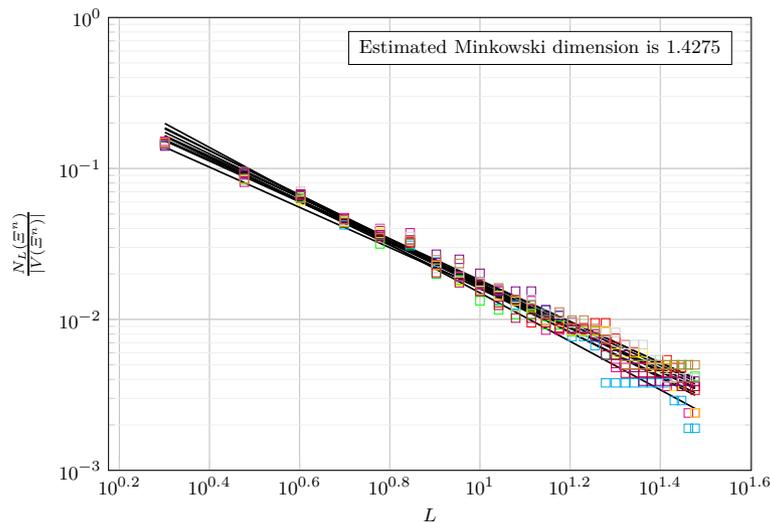

\end{example}



\subsection{Lyapunov exponents and stochastic substitution systems}
\label{subsec:Stochastic Lyapunov}
This subsection is devoted to studying the stochastic behaviours of random iterated graph systems.
We summarise the process of random substitution on graph fractals to so-called \textit{stochastic substitution systems}, and we prove some properties of the associated Lypunov exponents.
Therefore, we shall start from introducing Lyapunov exponents.

\bigskip

Lyapunov exponents regarding random matrices product were first studied by Bellman~\cite{Bell54}.
Furstenberg and Kesten~\cite{Fursten60,Fursten63} then developed couples of famous results of the asymptotic behaviours for random matrices products.
Recently, Pollicott~\cite{Pollicott10,Pollicott21} presents several impressive works on effective numerical estimates of Lyapunov exponents.

We will continue to use the notations in~\cite{Pollicott10} for Lyapunov exponents.
Let $\mathcal{X}=\{\mathbf{X}_1,\ldots,\mathbf{X}_m\}$ be a random primitive and invertible matrices set with probability vector $(p_1,\dots,p_m)$.
Consider the sequence space $\Sigma=\{1,\ldots,m\}^{\mathbb{Z}^+}$, and $\mu$ be the corresponding Bernoulli measure on such space.
Choosing $\underline{i}=(i_n)_{n=1}^\infty \in \Sigma$,
the Lyapunov exponent of random matrices products is defined as
\begin{align}\label{formula:Lyapunov1}
\mathcal{L}(\mathcal{X})
&:=\lim_{n\to \infty}\frac{1}{n}\mathbb{E}\bigl(\log \Vert \mathbf{X}_{i_1} \cdots \mathbf{X}_{i_n}\Vert  \bigr) 
=\lim_{n\to \infty} \frac{1}{n} \int_{\underline{i}\in \Sigma} \log
\Vert \mathbf{X}_{i_1} \cdots \mathbf{X}_{i_n}\Vert d\mu(\underline{i}) \,.
\end{align}

Furstenberg and Kesten~\cite{Fursten60} develop that if $X_{i_n}$ is a stationary stochastic process, for almost every $(\mu)$ $\underline{i} \in \Sigma$, there is
\begin{align}\label{formula:Lyapunov2}
\mathcal{L}(\mathcal{X}) 
=\lim_{n \to \infty} \frac{1}{n} \log \Vert \mathbf{X}_{i_1} \cdots \mathbf{X}_{i_n} \Vert
=\lim_{n \to \infty} \frac{1}{n} \log [ \mathbf{X}_{i_1} \cdots \mathbf{X}_{i_n} ]_{jk}
\,,
\end{align}
for any $jk$ entry.
In this subsection, $\Vert*\Vert$ is valid for any norm.

In this paper, we will discuss a type of slightly different matrices products, but it will be proved the Lyapunov exponent of such system coincides with the associated random matrices products.

\bigskip


\begin{definition}\label{def:Stochastic substitution System}{\rm
Let $\mathcal{X}$ be a random matrices set with probability vector $Q=(p_1,\ldots,p_m)$.
For each $i=1,\ldots,k$, let $\mathbf{e}_{i}$ be the $i$-th standard basis unit vector of~$\mathbb{R}^k$.

Define a random function $\psi \::\: \{ \mathbf{e}_{1}, \dots, \mathbf{e}_{k}\} \to \mathbb{Z}_+^k$.
Define $k$ identical and independent $\psi(\mathbf{e}_1),\ldots,\psi(\mathbf{e}_k)$,
each with probability function
\[
  \mathbb{P}(\psi(\mathbf{e}_{i}) = \mathbf{e}_{i} \mathbf{X}_j) = p_j \,.
\]
Here, the random function $\psi$ randomly maps a basis unit vector to some vector by a certain probability function.
Consider any non-negative vector $\mathbf{x} = [\mathbf{x}]_1\mathbf{e}_1+\cdots+[\mathbf{x}]_k\mathbf{e}_k \in\mathbb{Z}_+^k$,
and define the random function $\Psi \::\: \mathbb{Z}_+^k  \to \mathbb{Z}_+^k$ by
\[
  \Psi(\mathbf{x}) = \overbrace{\psi(\mathbf{e}_1)+ \cdots+\psi(\mathbf{e}_1)}^{x_1}  + \cdots + \overbrace{\psi(\mathbf{e}_k)+\cdots +\psi(\mathbf{e}_n)}^{x_k} \,.
\]
Let $\boldsymbol{\alpha}_0$ be a given non-negative vector with integer entries
and define a sequence of stochastic vectors $\{\boldsymbol{\alpha}_n\}_{n\geq 1}$
by the iterations
\[
  \boldsymbol{\alpha}_n  = \Psi(\boldsymbol{\alpha}_{n-1}) \,.
\]
We call such the triple $\mathcal{S}=(\Vert\boldsymbol{\alpha}_{n}\Vert,\boldsymbol{\alpha}_{0},\mathcal{X})$ a {\em stochastic substitution system}.
Let $(\Omega,\mathcal{F}_n,\mathbb{P})$ be a probability space with $\Omega$ being a sample space.}
\end{definition}

\begin{definition}
Let $(\Vert \boldsymbol{\alpha}_{t}\Vert,\boldsymbol{\alpha}_{0},\mathcal{X})$ be a stochastic substitution system.
The Lyapunov exponent of $\mathcal{S}$ is defined by
\[
\mathcal{L}(\mathcal{S})
:=\lim_{n \to \infty} \frac{1}{n} \mathbb{E} \bigl( \log \Vert \boldsymbol{\alpha}_n \Vert \bigr) \,,
\]
where $\mathbb{E}$ is the expectation. 
\end{definition}

\begin{theorem}\label{thm:Lyapunov}
Let $\mathcal{S}=(\Vert \boldsymbol{\alpha}_{n}\Vert,\boldsymbol{\alpha}_{0},\mathcal{X})$ be a stochastic substitution system.
\[
\mathcal{L(\mathcal{S})}=\mathcal{L}(\mathcal{X})\,.
\]
\end{theorem}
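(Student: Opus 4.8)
The plan is to recognise the stochastic substitution system as a (mildly disguised) product of independent identically distributed non-negative random matrices drawn from $\mathcal{X}$, and then to read off its Lyapunov exponent from the Furstenberg--Kesten theory quoted above.

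First I would unwind one step of the recursion. Since $\Psi(\mathbf{x}) = [\mathbf{x}]_1\psi(\mathbf{e}_1) + \cdots + [\mathbf{x}]_k\psi(\mathbf{e}_k)$ and $\psi(\mathbf{e}_i)=\mathbf{e}_i\mathbf{X}_{J_i}$ is the $i$-th row of an independently drawn matrix $\mathbf{X}_{J_i}$, applying $\Psi$ to a vector $\mathbf{x}$ is exactly right-multiplication $\mathbf{x}\mapsto\mathbf{x}\mathbf{Y}$ by the random matrix $\mathbf{Y}$ whose $i$-th row is that row; the matrices $\mathbf{Y}_1,\mathbf{Y}_2,\dots$ produced at successive iterations are i.i.d.\ and each takes only finitely many non-negative values. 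Hence $\boldsymbol{\alpha}_n=\boldsymbol{\alpha}_0\mathbf{Y}_1\mathbf{Y}_2\cdots\mathbf{Y}_n$. The crucial observation is that $\mathbf{Y}_1$ has the \emph{same law} as a single draw $\mathbf{X}_J$, $J\sim Q$: recombining independently-chosen rows reassembles, by independence, precisely the product measure defining one matrix of $\mathcal{X}$. This is transparent for the matrix sets actually used in Theorem~\ref{thm:Random Substitution}, namely $\mathcal{M}$ and the $\mathcal{D}\in\mathscr{D}$, whose elements are by construction built from independently chosen rows $\boldsymbol{\chi}(R_{ij_i})$, respectively $\boldsymbol{\chi}(P_{ij_i})$. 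Consequently $(\mathbf{Y}_1,\dots,\mathbf{Y}_n)\stackrel{d}{=}(\mathbf{X}_{i_1},\dots,\mathbf{X}_{i_n})$ with $(i_\ell)$ drawn from $\mu$, so $\|\boldsymbol{\alpha}_n\|=\|\boldsymbol{\alpha}_0\mathbf{Y}_1\cdots\mathbf{Y}_n\|$ has the same distribution as $\|\boldsymbol{\alpha}_0\mathbf{X}_{i_1}\cdots\mathbf{X}_{i_n}\|$.

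Next I would remove the fixed prefactor $\boldsymbol{\alpha}_0$ and the dependence on the choice of norm. Primitivity makes any product $\mathbf{X}_{i_1}\cdots\mathbf{X}_{i_n}$ strictly positive once $n$ exceeds a fixed bound, and the usual Perron--Frobenius / Furstenberg--Kesten comparison then provides a constant $c\geq1$, depending only on $\mathcal{X}$ and the non-zero non-negative vector $\boldsymbol{\alpha}_0$, with $c^{-1}\|\mathbf{X}_{i_1}\cdots\mathbf{X}_{i_n}\|\le\|\boldsymbol{\alpha}_0\mathbf{X}_{i_1}\cdots\mathbf{X}_{i_n}\|\le c\|\mathbf{X}_{i_1}\cdots\mathbf{X}_{i_n}\|$ for all large $n$, uniformly in the realisation; the same two-sided bound relates any two norms. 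Taking $\log$, dividing by $n$ and taking expectations, the $O(1/n)$ error vanishes in the limit, giving $\mathcal{L}(\mathcal{S})=\lim_n\frac1n\mathbb{E}(\log\|\boldsymbol{\alpha}_n\|)=\lim_n\frac1n\mathbb{E}(\log\|\mathbf{X}_{i_1}\cdots\mathbf{X}_{i_n}\|)=\mathcal{L}(\mathcal{X})$ by \eqref{formula:Lyapunov1}. If one wants the limit to exist before invoking the quoted results, Kingman's subadditive ergodic theorem applies to $f_n=\log\|\boldsymbol{\alpha}_0\mathbf{Y}_1\cdots\mathbf{Y}_n\|$ --- subadditivity from submultiplicativity of a consistent norm, integrability automatic because only finitely many primitive matrices occur --- and yields both the almost-sure limit and $\mathcal{L}(\mathcal{S})=\inf_n\frac1n\mathbb{E}[f_n]$, consistent with the entrywise statement \eqref{formula:Lyapunov2}.

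The step I expect to be the main obstacle is the law identification $\mathbf{Y}_1\stackrel{d}{=}\mathbf{X}_J$ and, underlying it, pinning down exactly what structure makes it hold: drawing a fresh matrix independently for each row and keeping only that row must reproduce the law of one draw from $\mathcal{X}$, which holds for the product-structured sets $\mathcal{M}$ and $\mathcal{D}$ here but fails for an arbitrary finite matrix set. For a genuinely arbitrary $\mathcal{X}$ one would have to replace the identity in law by a two-sided comparison of exponents: an upper bound by expanding $[\mathbf{Y}_1\cdots\mathbf{Y}_n]_{jk}$ as a sum over lattice paths, grouped by their sub-exponentially many ``row-assignment patterns'', each group bounded by a synchronous product; and a lower bound by discarding, via non-negativity, all contributions except those from the event that at every step the $k$ rows happen to be taken from a common matrix --- on which $\boldsymbol{\alpha}_n=\boldsymbol{\alpha}_0\mathbf{X}_{i_1}\cdots\mathbf{X}_{i_n}$ exactly --- combined with primitivity to repair the remaining steps in blocks of bounded length. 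That comparison, not the ergodic-theoretic bookkeeping, is where the genuine difficulty lies.
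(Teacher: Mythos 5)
Your proposal takes a genuinely different route from the paper's. The paper never performs your distributional identification: it first discards $\boldsymbol{\alpha}_0$ via the entrywise statement \eqref{formula:Lyapunov2}, and then argues at the level of means, writing $\mathbb{E}\bigl(\log \Vert \boldsymbol{\alpha}_0\mathbf{X}_{i_1}\cdots\mathbf{X}_{i_n}\Vert\bigr)=\log\Vert\mathbb{E}(\boldsymbol{\alpha}_0\mathbf{X}_{i_1}\cdots\mathbf{X}_{i_n})\Vert$ and telescoping, via $\mathbb{E}\,\Psi(\mathbf{x})=\mathbf{x}\,\mathbb{E}(\mathbf{X}_J)$, down to $\log\Vert\mathbb{E}(\boldsymbol{\alpha}_n)\Vert$. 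You instead realise $\boldsymbol{\alpha}_n=\boldsymbol{\alpha}_0\mathbf{Y}_1\cdots\mathbf{Y}_n$ with i.i.d.\ row-recombined factors, identify the law of $\mathbf{Y}_t$ with a single draw from $\mathcal{X}$, and then invoke Furstenberg--Kesten/Kingman. Your route is the more reliable of the two: the paper's interchange of $\mathbb{E}$ and $\log$ is only an inequality (Jensen), and taken literally its chain would equate $\mathcal{L}(\mathcal{X})$ with $\log\rho(\mathbb{E}\mathbf{X})$, which is strictly larger in general; your pathwise, in-law identification avoids this entirely. One small repair to your norm step: a long heterogeneous product of primitive matrices need not become positive (e.g.\ $\left(\begin{smallmatrix}1&1\\1&0\end{smallmatrix}\right)\left(\begin{smallmatrix}0&1\\1&1\end{smallmatrix}\right)$ is triangular, and so are its powers), so remove the prefactor either by taking $\boldsymbol{\alpha}_0$ strictly positive or, as the paper does, by the entrywise form \eqref{formula:Lyapunov2}.

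The step you single out as the main obstacle is, however, not just hard but unclosable at the stated level of generality, so your fallback sketch for arbitrary $\mathcal{X}$ should be abandoned rather than completed. Take $\mathcal{X}=\{\mathbf{X}_1,\mathbf{X}_2\}$ with $p_1=p_2=\tfrac12$, $\mathbf{X}_1=\left(\begin{smallmatrix}2&2\\1&1\end{smallmatrix}\right)$, $\mathbf{X}_2=\left(\begin{smallmatrix}1&1\\2&2\end{smallmatrix}\right)$ (positive, hence primitive; perturb slightly if invertibility is insisted upon). Every product $\mathbf{X}_{i_1}\cdots\mathbf{X}_{i_n}$ has entry sum exactly $3^n$, so $\mathcal{L}(\mathcal{X})=\log 3$, whereas the row-recombined factor is $\mathbf{Y}=\mathbf{w}(1,1)$ with column $\mathbf{w}$ having entry sum $2,3,3,4$ each with probability $\tfrac14$, giving exponent $\tfrac14\log 72<\log 3$ for $\boldsymbol{\alpha}_n=\boldsymbol{\alpha}_0\mathbf{Y}_1\cdots\mathbf{Y}_n$. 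So without the row-product structure the identity $\mathcal{L}(\mathcal{S})=\mathcal{L}(\mathcal{X})$ is simply false under your (correct) matrix representation, and the theorem should be read as restricted to laws that factorise over rows --- which, as you note, $\mathcal{M}$ and every $\mathcal{D}\in\mathscr{D}$ satisfy by construction, so your argument does prove everything the paper subsequently uses. Finally, be explicit that your reduction $\Psi(\mathbf{x})=\mathbf{x}\mathbf{Y}$ reads the $x_i$ repeated summands $\psi(\mathbf{e}_i)$ in Definition~\ref{def:Stochastic substitution System} as the same realisation; if each copy were an independent draw (which is what arcwise-independent substitution as in Lemma~\ref{lem:Random Arc Growth} actually produces), $\boldsymbol{\alpha}_n$ would be a multitype branching process growing at rate $\log\rho(\mathbb{E}\mathbf{X})$ rather than $\mathcal{L}(\mathcal{X})$, and the identity would fail even with product structure. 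Your proof is correct in the same-realisation, row-product regime, which is the only regime in which the claimed equality can hold.
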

\begin{proof}
First, $\boldsymbol{\alpha}_0$ is a non-negative row vector.
By formula (\ref{formula:Lyapunov2}), we have
\[
\lim_{n \to \infty} \frac{1}{n} \log \Vert \mathbf{X}_{i_1} \cdots \mathbf{X}_{i_n} \Vert
=\lim_{n \to \infty} \frac{1}{n} \log \Vert \boldsymbol{\alpha}_0 \mathbf{X}_{i_1} \cdots \mathbf{X}_{i_n} \Vert\,.
\]

Because $X_{i_n}$ are all independent, it is found 
\begin{align*}
\mathbb{E}\bigl( \log \Vert \boldsymbol{\alpha}_0 \mathbf{X}_{i_1} \cdots \mathbf{X}_{i_n}\Vert \bigr)
&=\log \Vert\mathbb{E}( \boldsymbol{\alpha}_0 \mathbf{X}_{i_1} \cdots \mathbf{X}_{i_n})\Vert \\
&=\log \Vert\mathbb{E}( \Phi({\boldsymbol{\alpha}}_0))\mathbb{E}( \mathbf{X}_{i_2} \cdots \mathbf{X}_{i_n})\Vert
=\log \Vert\mathbb{E}( \boldsymbol{\alpha}_1)\mathbb{E}( \mathbf{X}_{i_2} \cdots \mathbf{X}_{i_n})\Vert \\
&= \cdots =\log \Vert \mathbb{E}(\boldsymbol{\alpha}_n)\Vert \,.
\end{align*}

Finally, by the above,
\begin{align*}
\mathcal{L}(\mathcal{X})
&=\lim_{n\to \infty}\frac{1}{n}\mathbb{E}\bigl(\log \Vert \mathbf{X}_{i_1} \cdots \mathbf{X}_{i_n}\Vert  \bigr) \\
&=\lim_{n \to \infty} \frac{1}{n} \log \Vert \mathbb{E}(\boldsymbol{\alpha}_n)\Vert
=\lim_{n \to \infty} \frac{1}{n} \mathbb{E}(\log \Vert \boldsymbol{\alpha}_n\Vert) \\
&=\mathcal{L}(\mathcal{S})\,.
\end{align*}
\end{proof}

\begin{theorem}\label{thm:Stochastic Substitution System}
Almost surely, 
\[
\Vert \boldsymbol{\alpha}_n \Vert \asymp \exp\bigl(\mathcal{L}(\mathcal{X})\bigr)^n\,.
\]
\end{theorem}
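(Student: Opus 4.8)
The plan is to recognise $\Vert\boldsymbol{\alpha}_n\Vert$ as the norm of a product of i.i.d.\ random matrices applied to a fixed vector, and then to invoke the almost-sure form of the Furstenberg--Kesten theorem recorded as formula~(\ref{formula:Lyapunov2}). First I would unwind the iteration $\boldsymbol{\alpha}_n=\Psi(\boldsymbol{\alpha}_{n-1})$ exactly as in the proof of Theorem~\ref{thm:Lyapunov}: the $n$-th application of $\Psi$ multiplies $\boldsymbol{\alpha}_{n-1}$ on the right by a matrix $\mathbf{X}_{i_n}\in\mathcal{X}$ selected according to the probability vector $(p_1,\dots,p_m)$, independently over $n$, so that
\[
  \boldsymbol{\alpha}_n \;=\; \boldsymbol{\alpha}_0\,\mathbf{X}_{i_1}\mathbf{X}_{i_2}\cdots\mathbf{X}_{i_n},
\]
with $\underline{i}=(i_n)_{n\ge1}\in\Sigma$ distributed by the Bernoulli measure $\mu$. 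Since all norms on a finite-dimensional space are comparable, it suffices to establish that $\frac{1}{n}\log\Vert\boldsymbol{\alpha}_n\Vert_1\to\mathcal{L}(\mathcal{X})$ almost surely, which is exactly the content of the asserted equivalence $\Vert\boldsymbol{\alpha}_n\Vert\asymp\exp(\mathcal{L}(\mathcal{X}))^n$.

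Next I would squeeze $\Vert\boldsymbol{\alpha}_n\Vert_1$ between two quantities whose exponential growth rate is already known. Write $\mathbf{Y}_n=\mathbf{X}_{i_1}\cdots\mathbf{X}_{i_n}$, so that $\Vert\boldsymbol{\alpha}_n\Vert_1=\sum_{j,k}[\boldsymbol{\alpha}_0]_j[\mathbf{Y}_n]_{jk}$. On the one hand, submultiplicativity gives $\Vert\boldsymbol{\alpha}_n\Vert_1\le\Vert\boldsymbol{\alpha}_0\Vert_1\,\Vert\mathbf{Y}_n\Vert$ for a matrix norm $\Vert\cdot\Vert$ compatible with the vector norm $\Vert\cdot\Vert_1$. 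On the other hand, fixing a coordinate $j_0$ with $[\boldsymbol{\alpha}_0]_{j_0}\ge1$ (available since $\boldsymbol{\alpha}_0$ is a nonzero non-negative integer vector), we get
\[
  [\mathbf{Y}_n]_{j_0 1} \;\le\; \Vert\boldsymbol{\alpha}_n\Vert_1 \;\le\; \Vert\boldsymbol{\alpha}_0\Vert_1\,\Vert\mathbf{Y}_n\Vert\,,
\]
where the entry $[\mathbf{Y}_n]_{j_0 1}$ is strictly positive for large $n$ by primitivity of the system. Formula~(\ref{formula:Lyapunov2}) says that, $\mu$-almost surely, both $\frac1n\log\Vert\mathbf{Y}_n\Vert$ and $\frac1n\log[\mathbf{Y}_n]_{j_0 1}$ tend to $\mathcal{L}(\mathcal{X})$, so sandwiching yields $\frac1n\log\Vert\boldsymbol{\alpha}_n\Vert_1\to\mathcal{L}(\mathcal{X})$ almost surely. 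The limiting constant must coincide with the $L^1$-limit $\mathcal{L}(\mathcal{S})=\lim\frac1n\mathbb{E}\log\Vert\boldsymbol{\alpha}_n\Vert$ supplied by Theorem~\ref{thm:Lyapunov}, and indeed $\mathcal{L}(\mathcal{S})=\mathcal{L}(\mathcal{X})$; alternatively, one may run the whole argument through Kingman's subadditive ergodic theorem applied to the stationary, ergodic, subadditive cocycle $n\mapsto\log\Vert\mathbf{Y}_n\Vert$, which produces the almost-sure and $L^1$ limits simultaneously and identifies both with $\mathcal{L}(\mathcal{X})$.

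The main obstacle is the transfer from the matrix product $\mathbf{Y}_n$ to $\boldsymbol{\alpha}_n$ itself, i.e.\ making sure that the fixed initial vector $\boldsymbol{\alpha}_0$ distorts neither the growth rate nor the existence of the limit. The upper inequality above is immediate, but the lower one relies on primitivity: it is precisely what forces every entry of the long product $\mathbf{Y}_n$ to be positive and of the common exponential order $\exp(\mathcal{L}(\mathcal{X})n)$, so that the contribution of $\boldsymbol{\alpha}_0$ is only an $O(1)$ multiplicative factor — this is the stochastic counterpart of Lemma~\ref{lem:Norm Range}. The remaining hypotheses are routine to check: the exponent $\mathcal{L}(\mathcal{X})$ is finite because $\mathcal{X}$ is a finite collection of non-negative matrices, and the shift on $\Sigma$ equipped with the Bernoulli measure $\mu$ is ergodic, so both formula~(\ref{formula:Lyapunov2}) and Kingman's theorem are at our disposal.
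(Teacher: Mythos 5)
Your proposal is correct and takes essentially the same route as the paper, whose entire proof consists of citing the almost-sure Furstenberg--Kesten statement (\ref{formula:Lyapunov2}) together with Theorem~\ref{thm:Lyapunov}; your reduction of $\Vert\boldsymbol{\alpha}_n\Vert$ to the norm of an i.i.d.\ matrix product acting on $\boldsymbol{\alpha}_0$, with the primitivity sandwich (or Kingman's theorem) controlling the fixed initial vector, merely makes explicit the transfer that the paper leaves implicit. The one caveat---shared with the paper itself---is that the literal factorisation $\boldsymbol{\alpha}_n=\boldsymbol{\alpha}_0\mathbf{X}_{i_1}\cdots\mathbf{X}_{i_n}$ treats each step as applying one common matrix, whereas Definition~\ref{def:Stochastic substitution System} substitutes the coordinates independently; that identification is precisely what Theorem~\ref{thm:Lyapunov} is being invoked to license, so your argument is no less complete than the paper's.
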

\begin{proof}
By formula~(\ref{formula:Lyapunov2}) and Theorem~\ref{thm:Lyapunov}.
\end{proof}

\smallskip

\subsection{Growths of arcs and nodes}

\begin{lemma}\label{lem:Random Arc Growth}
Almost surely $|E(\Xi^n)|\overset{n \to \infty}{\asymp} 
\exp\bigl(\mathcal{L}(\mathcal{M})\bigr)^n$.
\end{lemma}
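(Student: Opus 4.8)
The plan is to recognise the colour-count vectors $\boldsymbol{\chi}(\Xi^n)$ as the state vectors of a stochastic substitution system in the sense of Definition~\ref{def:Stochastic substitution System}, and then read off the growth of $|E(\Xi^n)|$ from Theorem~\ref{thm:Stochastic Substitution System}. This is the random counterpart of Lemma~\ref{lem:Arc Growth}, where in the deterministic case one simply used $\boldsymbol{\chi}(\Xi^n)=\boldsymbol{\chi}(\Xi^{n-1})\mathbf{M}$; here the single matrix $\mathbf{M}$ is replaced by the random matrices set $\mathcal{M}$. Concretely, I would take $\mathcal{X}=\mathcal{M}$ (with weights $\mathbb{P}(\mathbf{M})=\prod_{i=1}^\lambda p_{ij_i}$) and $\boldsymbol{\alpha}_0=\boldsymbol{\chi}(\Xi^0)$. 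By construction of the random iterated graph system, the step from $\Xi^{n-1}$ to $\Xi^n$ replaces every $i$-coloured arc, independently of all the others, by $R_{ij}$ with probability $p_{ij}$, and each such replacement adds exactly $\boldsymbol{\chi}(R_{ij})$ to the colour counts. On the other hand, if one draws $\mathbf{M}\in\mathcal{M}$ according to its weights, the $i$-th row of $\mathbf{M}$ is $\boldsymbol{\chi}(R_{ij})$ with marginal probability $p_{ij}$, and the product form of $\mathbb{P}(\mathbf{M})$ makes the rows of the drawn matrix independent. Hence the random map $\Psi$ of Definition~\ref{def:Stochastic substitution System} attached to $\mathcal{X}=\mathcal{M}$ has the same effect on colour-count vectors as one substitution step of the graph system, so $\boldsymbol{\chi}(\Xi^n)=\Psi(\boldsymbol{\chi}(\Xi^{n-1}))$ in law; coupling the two processes on one probability space gives $\boldsymbol{\chi}(\Xi^n)=\boldsymbol{\alpha}_n$ for all $n$.

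Next, since $\boldsymbol{\chi}(\Xi^n)$ is a non-negative vector whose $j$-th entry counts the $j$-coloured arcs, $|E(\Xi^n)|=\Vert\boldsymbol{\chi}(\Xi^n)\Vert_1=\Vert\boldsymbol{\alpha}_n\Vert_1$. The system being primitive, Theorem~\ref{thm:Stochastic Substitution System} applies to the stochastic substitution system $\mathcal{S}=(\Vert\boldsymbol{\alpha}_n\Vert,\boldsymbol{\chi}(\Xi^0),\mathcal{M})$ and, being valid for any norm, yields almost surely $\Vert\boldsymbol{\alpha}_n\Vert_1\asymp\exp(\mathcal{L}(\mathcal{M}))^n$. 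Combining the two identities gives $|E(\Xi^n)|\overset{n\to\infty}{\asymp}\exp(\mathcal{L}(\mathcal{M}))^n$ almost surely, which is the claim.

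The only step needing real care is the identification $\boldsymbol{\chi}(\Xi^n)=\boldsymbol{\alpha}_n$: one must verify that independent per-arc substitution really coincides with the action of $\Psi$, whose building blocks $\psi(\mathbf{e}_1),\dots,\psi(\mathbf{e}_\lambda)$ are independent but each chooses a full matrix of $\mathcal{X}$ and returns a single row of it. The point, which I would state explicitly in the proof, is that selecting $\mathbf{M}\in\mathcal{M}$ with probability $\prod_i p_{ij_i}$ and reading off row $i$ reproduces $\boldsymbol{\chi}(R_{ij})$ with probability $p_{ij}$, independently over $i$, so the joint law of the colour-count increments matches in the two models. Once this is in place, the lemma is an immediate consequence of Theorem~\ref{thm:Stochastic Substitution System}; the remaining details are routine bookkeeping.
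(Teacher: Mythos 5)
Your proposal is correct and follows essentially the same route as the paper: the paper's proof simply observes that $(\Vert\boldsymbol{\chi}(\Xi^n)\Vert_1,\boldsymbol{\chi}(\Xi^0),\mathcal{M})$ is a stochastic substitution system (since each arc is substituted independently according to the rows of a matrix drawn from $\mathcal{M}$) and then invokes Theorem~\ref{thm:Stochastic Substitution System}. Your only addition is to spell out the law-matching between per-arc substitution and the map $\Psi$, which the paper treats as immediate.
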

\begin{proof}
$(|E(\Xi^n)|=\Vert \chi(\Xi^n) \Vert_1,\chi(\Xi^0),\mathcal{M})$ is clearly a stochastic substitution system because every arc substitution is independent and associated with some matrix in $\mathcal{M}$.
Then by Theorem~\ref{thm:Stochastic Substitution System} we obtain the desired result.
\end{proof}

\begin{lemma}\label{lem:Random Node Growth}
Almost surely $|V(\Xi^n)|\overset{n \to \infty}{\asymp} 
\exp\bigl(\mathcal{L}(\mathcal{M})\bigr)^n$.
\end{lemma}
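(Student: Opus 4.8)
The plan is to mirror the argument used for Lemma~\ref{lem:Node Growth} in the deterministic case, replacing the spectral-radius estimates by the Lyapunov-exponent asymptotics established in Theorem~\ref{thm:Stochastic Substitution System}, together with the arc growth rate from Lemma~\ref{lem:Random Arc Growth}. First I would decompose the vertex set the same way: writing $V^*(\Xi^n) = V(\Xi^n)\setminus V(\Xi^{n-1})$ for the nodes newly created at step $n$, we have $|V(\Xi^n)| = |V(\Xi^0)| + \sum_{i=1}^n |V^*(\Xi^i)|$. Every new node at step $i$ arises from substituting some arc of $\Xi^{i-1}$, and each $k$-coloured arc is replaced by a random rule graph $R_{kj}$ (chosen with probability $p_{kj}$), contributing $|V(R_{kj})| - 2$ new interior nodes. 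Since the number of interior nodes contributed is bounded above and below by positive constants times the number of arcs substituted, we get $|V^*(\Xi^i)| \asymp |E(\Xi^{i-1})|$ with deterministic constants depending only on $\min_{i,j}(|V(R_{ij})|-2) \geq 0$ and $\max_{i,j}(|V(R_{ij})|-2)$; one must be slightly careful that $|V(R_{ij})|-2$ could be zero for some rules, but the requirement $\dist_{R_{ij}}(A,B) \geq 2$ forces at least one interior node on every $A$–$B$ path, so in fact $|V(R_{ij})| \geq 3$ for all $i,j$ and the lower constant is strictly positive.

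Next I would combine this with Lemma~\ref{lem:Random Arc Growth}: almost surely $|E(\Xi^{i})| \asymp \exp(\mathcal{L}(\mathcal{M}))^i$. Substituting into the sum,
\[
|V(\Xi^n)| \asymp \sum_{i=1}^{n} |E(\Xi^{i-1})| \asymp \sum_{i=0}^{n-1} \exp(\mathcal{L}(\mathcal{M}))^{i} \asymp \exp(\mathcal{L}(\mathcal{M}))^{n},
\]
where the last step uses $\exp(\mathcal{L}(\mathcal{M})) > 1$, which holds because each rule graph has $\dist(A,B)\ge 2$ and hence at least two arcs, so $\mathbf{M}$ has row sums at least $2$ and $\mathcal{L}(\mathcal{M}) \geq \log 2 > 0$. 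Since the event in Lemma~\ref{lem:Random Arc Growth} has probability $1$, and the remaining relations are deterministic consequences, the asymptotic for $|V(\Xi^n)|$ holds almost surely as well.

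The main subtlety — the step I would be most careful about — is the uniformity of the implied constants in the chain of $\asymp$'s. In the deterministic case the constant in $|V(\Xi^n)| \asymp \rho(\mathbf{M})^n$ is genuinely a single constant; here $|E(\Xi^i)| \asymp \exp(\mathcal{L}(\mathcal{M}))^i$ holds with a random ratio that converges almost surely (by the Furstenberg–Kesten form, formula~(\ref{formula:Lyapunov2})), so one should phrase the conclusion as: almost surely, $|V(\Xi^n)|/\exp(\mathcal{L}(\mathcal{M}))^n$ is bounded above and below by positive (sample-path-dependent) constants for all large $n$. This follows because a Cesàro-type sum $\sum_{i=0}^{n-1} a_i$ of a sequence with $a_i \asymp r^i$ ($r>1$) again satisfies $\sum_{i=0}^{n-1} a_i \asymp r^n$ with the same implied constants up to the factor $r/(r-1)$, so no new randomness enters. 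With that observed, the proof is complete.
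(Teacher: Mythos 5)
Your proposal is correct and follows essentially the same route as the paper: the paper's proof simply says to repeat the decomposition of Lemma~\ref{lem:Node Growth} (new nodes at each step are proportional to the arcs substituted) with Lemma~\ref{lem:Random Arc Growth} supplying the almost-sure growth rate $\exp\bigl(\mathcal{L}(\mathcal{M})\bigr)^n$, which is exactly what you do. Your added remarks on the strictly positive lower constant from $\dist_{R_{ij}}(A,B)\geq 2$ and on the sample-path-dependent constants are sensible elaborations rather than a different argument.
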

\begin{proof}
This proof is of the form of Lemma~\ref{lem:Node Growth}, by Lemma~\ref{lem:Random Arc Growth} providing that almost surely $|E(\Xi)|\overset{n \to \infty}{\asymp} 
\exp\bigl(\mathcal{L}(\mathcal{M})\bigr)^n$.
\end{proof}

\begin{theorem}\label{thm:sparse}
$\Xi$ is always a sparse graph.
\end{theorem}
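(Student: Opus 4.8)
The plan is to estimate the graph density
\[
D(\Xi) := \lim_{n\to\infty}\frac{|E(\Xi^n)|}{|V(\Xi^n)|\bigl(|V(\Xi^n)|-1\bigr)}
\]
and to show it vanishes for \emph{every} realization $\Xi\in\mathcal{G}^\infty$, since a graph is sparse precisely when $D=0$. I would do this by a deterministic counting argument rather than by appealing to the almost-sure Lyapunov asymptotics, precisely because the statement is claimed for every $\Xi$ and not merely almost surely.

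Write $a_n = |E(\Xi^n)|$ and $v_n = |V(\Xi^n)|$, and set $C = \max_{i,j}|E(R_{ij})|$. The two ingredients both come directly from the substitution mechanism together with the standing hypothesis $\dist_{R_{ij}}(A,B)\ge 2$, which forces every rule graph $R_{ij}$ to have at least one internal node and at least two arcs. Substituting every arc of $\Xi^{n-1}$ therefore gives $a_n \le C\,a_{n-1}$, and also $v_n - v_{n-1} \ge a_{n-1}$ because distinct arcs of $\Xi^{n-1}$ are replaced by disjoint copies and each contributes at least one new internal node. From the second bound, $v_n \ge v_n - v_{n-1} \ge a_{n-1} \ge a_n/C$, i.e. $a_n \le C v_n$; and since $a_n \ge 2 a_{n-1}$ (each rule has at least two arcs) we get $a_{n-1}\ge 2^{n-1}a_0\to\infty$, hence $v_n\ge a_{n-1}\to\infty$.

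Combining these observations,
\[
D(\Xi) = \lim_{n\to\infty}\frac{a_n}{v_n(v_n-1)}
       \le \lim_{n\to\infty}\frac{C\,v_n}{v_n(v_n-1)}
       = \lim_{n\to\infty}\frac{C}{v_n-1} = 0\,,
\]
which is the claim. As an aside, for almost every $\Xi$ one may instead invoke the asymptotics $a_n \asymp v_n \asymp \exp\bigl(\mathcal{L}(\mathcal{M})\bigr)^n$ from Lemma~\ref{lem:Random Arc Growth} and Lemma~\ref{lem:Random Node Growth}, together with $\mathcal{L}(\mathcal{M})\ge\log 2>0$, to get $D(\Xi)\asymp \exp\bigl(-\mathcal{L}(\mathcal{M})\bigr)^n\to 0$; this reproves the statement in the almost-sure sense and matches the argument given in the deterministic case.

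There is no genuine obstacle here: all the content sits in the elementary inequality $a_n \le C v_n$, which pits the quadratic denominator against an at-most-geometric numerator. The only points requiring care are verifying $v_n\to\infty$ (handled above via $\dist_{R_{ij}}(A,B)\ge 2$) and making sure the estimate is stated for a fixed sample path of the random system rather than merely in expectation, so that the conclusion genuinely holds for each $\Xi$.
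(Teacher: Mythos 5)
Your argument is correct. The paper's own ``proof'' is a bare assertion: it simply writes down the density $D(\Xi)=\lim_{n\to\infty}|E(\Xi^n)|/\bigl(|V(\Xi^n)|(|V(\Xi^n)|-1)\bigr)$ and declares it zero for every $\Xi\in\mathcal{G}^\infty$, with the justification left implicit (presumably the growth lemmas $|E(\Xi^n)|\asymp|V(\Xi^n)|\asymp\exp\bigl(\mathcal{L}(\mathcal{M})\bigr)^n$, which are only almost-sure statements and so, strictly speaking, do not by themselves deliver the ``always''). What you do differently is supply a genuinely deterministic counting argument valid for each fixed realization: the hypothesis $\dist_{R_{ij}}(A,B)\ge 2$ gives every rule graph at least one internal node and at least two arcs, whence $|E(\Xi^n)|\le C\,|E(\Xi^{n-1})|\le C\,|V(\Xi^n)|$ and $|V(\Xi^n)|\to\infty$, so the quadratic denominator wins regardless of which substitutions were drawn. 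This buys exactly what the statement claims (sparseness for \emph{every} $\Xi$, not merely almost every), it also explains the paper's remark that the conclusion persists even when all rule graphs are complete (your constant $C$ just grows), and your aside correctly recovers the almost-sure rate $D(\Xi)\asymp\exp\bigl(-\mathcal{L}(\mathcal{M})\bigr)^n$ from Lemmas~\ref{lem:Random Arc Growth} and~\ref{lem:Random Node Growth}. In short, your route is more elementary and strictly stronger in scope than the argument the paper sketches; the only cosmetic point is that one should note the non-negative sequence is dominated by $C/(|V(\Xi^n)|-1)$, so the limit in the definition of $D(\Xi)$ indeed exists and equals zero.
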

\begin{proof}
For all possible $\Xi\in \mathcal{G}^\infty$, there are always
\[
D(\Xi):=\lim_{n\to\infty}\frac{|E(\Xi^n)|}{|V(\Xi^n)|(|V(\Xi^n)|-1)}=0\,.\qquad
\]
\end{proof}
\begin{remark}
Theorem~\ref{thm:sparse} holds even if all graphs in $\mathcal{R}$ are complete.
\end{remark}

\subsection{Proof of Theorem~\ref{thm:Random Substitution}}\label{sec:Proof of Random}

\begin{lemma}\label{lem:RandomGamma}
$\mathbb{P}\Bigl(|\Gamma^n| \asymp \exp\bigl(\mathcal{L}_{\min}(\mathscr{D})\bigr)^n\Bigr) =1 $.
\end{lemma}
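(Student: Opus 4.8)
The plan is to transplant the deterministic argument of Theorem~\ref{thm:minimal Distance} into the random setting, using the stochastic substitution machinery of Subsection~\ref{subsec:Stochastic Lyapunov} in place of the combinatorial matrix estimates. First I would establish the random analogue of Lemmas~\ref{lem:Uniform} and~\ref{lem:10.7}. Call an $A$--$B$ path $P^{n}$ in $\Xi^{n}$ \emph{uniform} if it is built recursively from the arc of $\Xi^{0}$ by fixing, at each substitution step and for every pair $(i,j)$, an $A$--$B$ path $P_{ij}\in\mathcal{P}_{ij}$ inside the rule graph $R_{ij}$, and then replacing each $i$-coloured edge that happened to be substituted by $R_{ij}$ with a copy of that $P_{ij}$; each such step-$k$ prescription is precisely a choice of an element of $\mathscr{D}$, so a uniform path of height $n$ is encoded by a sequence in $\mathscr{D}^{n}$ together with the realisation. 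Running the induction of Lemma~\ref{lem:Uniform} — decompose a shortest path $\Gamma^{n+1}$ into the $A$--$B$ geodesic pieces of the embedded $n$-step copies sitting on the arcs of $\Xi^{1}$, and note that a uniform choice is a purely combinatorial recipe applying verbatim to every random realisation of those copies — shows that for each $n$ at least one shortest path $\Gamma^{n}$ is uniform. Hence
\[
  |\Gamma^{n}| \;=\; \min\bigl\{\, \|\chi(P^{n})\|_{1} \,:\, P^{n}\text{ a uniform }A\text{--}B\text{ path in }\Xi^{n} \,\bigr\}\,,
\]
and, because arc substitutions are independent, for a \emph{fixed} (level-independent) choice $\mathcal{D}\in\mathscr{D}$ the sequence $\chi(P^{n})$ evolves exactly as the stochastic vector of the stochastic substitution system $\bigl(\|\chi(P^{n})\|_{1},\chi(\Xi^{0}),\mathcal{D}\bigr)$ from Definition~\ref{def:Stochastic substitution System}.

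For the upper bound, fix $\mathcal{D}^{\ast}\in\mathscr{D}$ attaining $\mathcal{L}_{\min}(\mathscr{D})=\min_{\mathcal{D}\in\mathscr{D}}\mathcal{L}(\mathcal{D})$ and use it at every level; denote the resulting uniform path $P^{n}_{\ast}$. By Theorem~\ref{thm:Stochastic Substitution System}, almost surely $|P^{n}_{\ast}|=\|\chi(P^{n}_{\ast})\|_{1}\asymp\exp\bigl(\mathcal{L}(\mathcal{D}^{\ast})\bigr)^{n}=\exp\bigl(\mathcal{L}_{\min}(\mathscr{D})\bigr)^{n}$, so $|\Gamma^{n}|\le|P^{n}_{\ast}|$ yields $|\Gamma^{n}|\preceq\exp(\mathcal{L}_{\min}(\mathscr{D}))^{n}$ almost surely. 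The identical computation applied to each of the finitely many $\mathcal{D}\in\mathscr{D}$, intersected over them, shows that almost surely $\|\chi(P^{n}_{\mathcal{D}})\|_{1}\asymp\exp(\mathcal{L}(\mathcal{D}))^{n}$ for every fixed-type uniform path; since $\min_{\mathcal{D}}\exp(\mathcal{L}(\mathcal{D}))^{n}=\exp(\mathcal{L}_{\min}(\mathscr{D}))^{n}$, this already gives the lower bound $|\Gamma^{n}|\succeq\exp(\mathcal{L}_{\min}(\mathscr{D}))^{n}$ \emph{if} one only had to compete against uniform paths of constant type.

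The genuine obstacle is therefore the lower bound against uniform paths whose choice of $\mathcal{D}\in\mathscr{D}$ varies from level to level and may be tuned to the realisation: I must rule out that such a mixed strategy achieves a strictly smaller exponential rate. This is the stochastic counterpart of Theorem~\ref{thm:Spectral Combinatorial Matrix}, and the structural input that should make it go through is, as in the deterministic case, the special shape of the matrices — each row of a matrix in any $\mathcal{D}\in\mathscr{D}$ is a path-incidence vector $\chi(P_{ij})$. My plan is to upgrade the Collatz--Wielandt/Perron--Frobenius inequality behind Theorem~\ref{thm:10.2} to the pathwise level: combining the Furstenberg--Kesten identity~(\ref{formula:Lyapunov2}) with Kingman's subadditive ergodic theorem, show that for each $\mathcal{D}\in\mathscr{D}$ the per-step logarithmic growth of $\|\chi(\Xi^{0})\,\mathbf{Y}_{1}\cdots\mathbf{Y}_{n}\|_{1}$ with $\mathbf{Y}_{k}$ drawn from $\mathcal{D}$ is almost surely at least $\mathcal{L}_{\min}(\mathscr{D})$, then, because at each level there are only finitely many admissible sets in $\mathscr{D}$ and a choice affects only one factor at a time, promote this to a bound uniform over all $\mathscr{D}^{n}$-valued (in particular all adaptive) strategies by a large-deviation/Borel--Cantelli estimate strong enough to absorb the $|\mathscr{D}|^{n}$ strategies. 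Once the level-varying lower bound $|\Gamma^{n}|\succeq\exp(\mathcal{L}_{\min}(\mathscr{D}))^{n}$ holds almost surely, it combines with the upper bound of the previous paragraph to give $|\Gamma^{n}|\asymp\exp(\mathcal{L}_{\min}(\mathscr{D}))^{n}$ on an event of probability one, which is exactly the assertion of Lemma~\ref{lem:RandomGamma}.
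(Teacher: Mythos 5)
Your first two paragraphs follow essentially the same route as the paper: for a fixed $\mathcal{D}\in\mathscr{D}$ the colour-count vector of a uniform $A$--$B$ path is a stochastic substitution system in the sense of Definition~\ref{def:Stochastic substitution System}, so Theorem~\ref{thm:Stochastic Substitution System} gives the almost-sure rate $\exp(\mathcal{L}(\mathcal{D}))^n$, and the reduction of $\Gamma^n$ to uniform paths is the random re-run of Lemma~\ref{lem:Uniform}. This part is fine, and in fact you spell out the verification (independence of the arc substitutions, encoding of a uniform path by its level-wise choices) more explicitly than the paper does.

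The genuine gap is exactly where you stop and announce a plan: the almost-sure lower bound against uniform paths whose choice of $\mathcal{D}\in\mathscr{D}$ varies from level to level and is tuned to the realisation. You correctly identify this as the stochastic counterpart of Theorem~\ref{thm:Spectral Combinatorial Matrix}, but you do not prove it; you only propose ``Kingman plus a large-deviation/Borel--Cantelli estimate strong enough to absorb the $|\mathscr{D}|^{n}$ strategies''. That is not a routine step. For a general finite family of random matrix sets, a mixed product can have strictly smaller growth than the minimum of the individual Lyapunov exponents --- this is precisely the phenomenon exhibited by the deterministic counterexample in the remark following Theorem~\ref{thm:Spectral Combinatorial Matrix}, where $\rho(\mathbf{D}_1\mathbf{D}_2)<\rho(\mathbf{D}_1)^2$ even though each factor has the larger spectral radius. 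The deterministic proof escapes this only by exploiting the closure of $\mathcal{D}$ under exchanging individual rows (the Cartesian-product structure of the path choices), via the Perron--Frobenius argument of Theorem~\ref{thm:10.2}; any correct random argument must exploit the analogous closure of $\mathscr{D}$ under swapping a single path choice $P_{ij}$, and a bare large-deviation bound need not beat the entropy $\log|\mathscr{D}|$ of the strategy space, let alone handle strategies adapted to the realisation. Since this is the step that actually forces $\mathcal{L}_{\min}(\mathscr{D})$ to be the correct exponent in Lemma~\ref{lem:RandomGamma}, leaving it as a programme means the proof is incomplete at its crux. (For comparison, the paper's own proof is also terse here: it invokes Lemma~\ref{lem:Uniform} to reduce to a single optimal choice per rule graph and then asserts that the shortest path follows the rate $\exp(\mathcal{L}_{\min}(\mathscr{D}))$, rather than giving a stochastic analogue of Theorem~\ref{thm:Spectral Combinatorial Matrix}; so your diagnosis of where the difficulty lies is sound, but your proposal does not yet close it.)
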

\begin{proof}
For a certain $\mathcal{D}\in \mathscr{D}$, we verify that $(\Vert\chi(\Gamma^n)\Vert_1,\chi(\Gamma^0),\mathcal{D})$ is a stochastic substitution system.
This is because every ``kind" of independent substitution $\mathcal{D}$ can be found within some $\Gamma^n$.
By Lemma~\ref{lem:Uniform}, there exists one $\mathcal{D}$ such that $\Gamma^n$ is almost surely shortest.
In other words, for every rule graph $R_{ij}$, even though there are (one or) many possible paths between $A$ and $B$,
Lemma~\ref{lem:Uniform} always guarantees that an optimisation can be achieved if we only pick one particular path between $A$ and $B$ from each $R_{ij}$. 

Hence, the paths between $A$ and $B$ in $\Xi^n$ all possess growth rate $\exp\bigl(\mathcal{L}(\mathcal{D})\bigr)$.
Among them, the shortest path $\Gamma^n$ should follow the growth rate $\exp\bigl(\mathcal{L}_{\min}(\mathscr{D})\bigr)$.

\end{proof}

\begin{lemma}\label{lem:RandomDiameter}
$\mathbb{P} \Bigl(\Delta(\Xi^n) \asymp \exp\bigl(\mathcal{L}_{\min}(\mathscr{D})\bigr)^n \Bigr)=1 $.
\end{lemma}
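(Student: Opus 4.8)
The statement is the almost-sure analogue of Lemma~\ref{lem:Diameter}, and the plan is to imitate that proof: run the same one-generation-at-a-time telescoping of the diameter, but feed in the random distance estimate Lemma~\ref{lem:RandomGamma} (together with Theorem~\ref{thm:Stochastic Substitution System}) in place of the deterministic Theorem~\ref{thm:minimal Distance}, and check that every step holds with probability one.

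First, the lower bound is immediate. Since the two marked nodes $A$ and $B$ of $\Xi^0$ are vertices of every $\Xi^n$, we have $\Delta(\Xi^n)\ge\dist_{\Xi^n}(A,B)=|\Gamma^n|$, and Lemma~\ref{lem:RandomGamma} gives $|\Gamma^n|\asymp\exp(\mathcal{L}_{\min}(\mathscr{D}))^n$ almost surely; hence $\Delta(\Xi^n)\succeq\exp(\mathcal{L}_{\min}(\mathscr{D}))^n$ a.s.

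For the upper bound I would reproduce the chain of inequalities in the proof of Lemma~\ref{lem:Diameter}. Writing $\Delta_{\Xi^n}(V(\Xi^{n-k}))=\sup\{\dist_{\Xi^n}(u,v):u,v\in V(\Xi^{n-k})\}$ and peeling off generations one at a time (a new vertex of $V^*(\Xi^{n-k})$ sits inside a rule graph whose edges have, after $k$ further substitutions, grown into $k$-generation subgraphs of $\Xi^n$), one arrives at
\[
\Delta(\Xi^n)\ \le\ \Delta_{\Xi^n}(V(\Xi^0))\ +\ 2R\sum_{k=0}^{n-1}\Gamma^{(k)}_{\max},\qquad R:=\max_{i,j}\Delta(R_{ij}),
\]
where $\Gamma^{(k)}_{\max}$ is the largest pole-to-pole distance over all $k$-generation copies embedded in $\Xi^n$; there are at most $|E(\Xi^{n-k})|$ such copies, each individually governed by a stochastic substitution system built from one $\mathcal{D}\in\mathscr{D}$, so by Lemma~\ref{lem:RandomGamma} a fixed copy has pole distance of exponential rate $\exp(\mathcal{L}_{\min}(\mathscr{D}))$. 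For the top scales $k\ge n-C$ with $C$ a fixed constant, the copies are grown from arcs of the fixed finite graphs $\Xi^0,\dots,\Xi^C$, so there are only boundedly many of them and $\Delta_{\Xi^n}(V(\Xi^0))+\sum_{k\ge n-C}\Gamma^{(k)}_{\max}\asymp\exp(\mathcal{L}_{\min}(\mathscr{D}))^n$ a.s.\ directly from Lemma~\ref{lem:RandomGamma}; for the bottom scales $k$ below a small fixed fraction $\theta n$, the a priori bound $\Gamma^{(k)}_{\max}\le\|\chi(\Xi^0)\|_1\,\kappa^k$ (with $\kappa$ the largest row sum occurring in $\bigcup\mathscr{D}$) already gives $\sum_{k\le\theta n}\Gamma^{(k)}_{\max}=o(\exp(\mathcal{L}_{\min}(\mathscr{D}))^n)$ as soon as $\theta\log\kappa<\mathcal{L}_{\min}(\mathscr{D})$.

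The main obstacle is the intermediate range $\theta n<k<n-C$, where the number $|E(\Xi^{n-k})|\asymp\exp(\mathcal{L}(\mathcal{M})(n-k))$ of copies is exponentially large and a per-copy almost-sure estimate is not enough: one must rule out that a few atypically long internal shortest paths inflate $\Gamma^{(k)}_{\max}$ beyond $O(\exp(\mathcal{L}_{\min}(\mathscr{D}))^k)$. I would attack this with a large-deviation estimate for the empirical growth exponent $\tfrac1k\log\|\boldsymbol{\alpha}_k\|$ of the stochastic substitution system (a bounded quantity, so its upper rate function $I$ is strictly positive above $\mathcal{L}_{\min}(\mathscr{D})$), together with a Borel--Cantelli argument over $n$: at scale $k$ one asks the $\asymp\exp(\mathcal{L}(\mathcal{M})(n-k))$ copies to each stay below $\exp((\mathcal{L}_{\min}(\mathscr{D})+t_{n,k})k)$ with $t_{n,k}$ chosen just large enough that the allotted bounds still sum to $O(\exp(\mathcal{L}_{\min}(\mathscr{D}))^n)$, and checks that the resulting union bound $\exp(\mathcal{L}(\mathcal{M})(n-k))\,e^{-I(t_{n,k})k}$ is summable. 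Balancing the exponential copy count against the large-deviation decay is the delicate point; if this cannot be pushed through directly, the alternative is a rerouting argument exploiting primitivity, showing that a shortest path realising the diameter can avoid the rare bad sub-copies. Either way, once $\Gamma^{(k)}_{\max}$ is controlled on all three ranges one gets $\Delta(\Xi^n)\preceq\exp((\mathcal{L}_{\min}(\mathscr{D})+\varepsilon)n)$ eventually, a.s., for every $\varepsilon>0$, which with the lower bound fixes the exponential growth rate of $\Delta(\Xi^n)$ and proves the lemma.
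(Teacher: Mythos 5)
Your lower bound is fine, and your overall skeleton (telescoping the diameter over generations, replacing Theorem~\ref{thm:minimal Distance} by Lemma~\ref{lem:RandomGamma}) is exactly the route the paper takes: its own proof of Lemma~\ref{lem:RandomDiameter} is literally the one-line remark that the argument of Lemma~\ref{lem:Diameter} goes through with Lemma~\ref{lem:RandomGamma} in place of the deterministic distance estimate. Where you differ is that you correctly notice what that one-liner glosses over: in the random setting the $k$-generation copies sitting inside $\Xi^n$ are independent realisations, so the telescoping bound involves $\Gamma^{(k)}_{\max}$, a maximum over $\asymp\exp\bigl(\mathcal{L}(\mathcal{M})\bigr)^{n-k}$ copies, and a per-copy almost-sure rate from Lemma~\ref{lem:RandomGamma} does not by itself control that maximum. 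Identifying this is a genuine contribution; the deterministic proof has no such issue because all copies are identical.

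The problem is that you then do not close the gap you opened. The intermediate-scale step is only a plan: you assert, without proof, that the empirical exponent $\tfrac1k\log\Vert\boldsymbol{\alpha}_k\Vert$ of the stochastic substitution system satisfies an upper large-deviation bound with rate function strictly positive above $\mathcal{L}_{\min}(\mathscr{D})$, and you never carry out the balancing of the copy count $\exp\bigl(\mathcal{L}(\mathcal{M})(n-k)\bigr)$ against the decay $e^{-I(t_{n,k})k}$ together with the constraint $t_{n,k}\,k\preceq\mathcal{L}_{\min}(\mathscr{D})(n-k)$ needed so that the allotted bounds still sum to $O\bigl(\exp(\mathcal{L}_{\min}(\mathscr{D}))^n\bigr)$. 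That balance is not automatic (roughly it demands $I(t)/t$ comparable to $\mathcal{L}(\mathcal{M})/\mathcal{L}_{\min}(\mathscr{D})$ over the relevant range of $t$, which fails near $t=0$ for a generic rate function and so forces a careful choice of which scales are treated by the union bound, which by the deterministic $\kappa^k$ bound, and which by rerouting), and you yourself flag it as "the delicate point" with a fallback rerouting argument that is left entirely vague. As it stands, the upper bound $\Delta(\Xi^n)\preceq\exp\bigl(\mathcal{L}_{\min}(\mathscr{D})\bigr)^n$ a.s.\ is therefore not proved; the proposal is a (sharper-eyed) proof strategy rather than a proof, and completing it would in fact require more than the paper itself supplies for this lemma.
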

\begin{proof}
The analogous proof is established like Lemma~\ref{lem:Diameter}, by Lemma~\ref{lem:RandomGamma}.

\end{proof}

\bigskip

\textbf{We are now to prove Theorem~\ref{thm:Random Substitution}.}
\begin{proof}
We start from verifying the Minkowski dimension.
By Lemma~\ref{lem:RandomDiameter}, the growth of diameter of $\Xi^n$ is almost surely $\exp\bigl(\mathcal{L}_{\min}(\mathscr{D})\bigr)$.
It is feasible that we use the tricks in the proof of Theorem~\ref{thm:part I} in Subsection~\ref{Subsection: Proof of fractality}.
Similarly, we obtain that $|E(\Xi^k)|\preceq N_L^* (\Xi^n) \leq N_L (\Xi^n) \preceq |E(\Xi^k)|$ holds for almost every $\Xi\in \mathcal{G}$ when $n \to \infty$.
This suffices to have $\dim_B (\Xi)=\frac{\mathcal{L}(\mathcal{M})}{\mathcal{L}_{\min}(\mathscr{D})}$ with probability $1$.

In the sense of Hausdorff dimension, we also adopt the definition in Subsection~\ref{Subsection:Proof of Hausdorff}.
With the same techniques to estimate the mass distribution, the result boils down to, almost surely, $\dim_H(\Xi)=\frac{\mathcal{L}(\mathcal{M})}{\mathcal{L}_{\min}(\mathscr{D})}$.

Consequently, for almost every $\Xi \in \mathcal{G}^\infty$, one has $\dim_B(\Xi)=\dim_H(\Xi)=\frac{\mathcal{L}(\mathcal{M})}{\mathcal{L}_{\min}(\mathscr{D})}$, which completes the final proof.
\end{proof}

\section*{Acknowledgements}
This work was supported by the Additional Funding Programme for Mathematical Sciences, delivered by EPSRC (EP/V521917/1) and the Heilbronn Institute for Mathematical Research,
and also by the EPSRC Centre for Doctoral Training in Mathematics of Random Systems: Analysis, Modelling and Simulation (EP/S023925/1).


\bibliography{Reference}
\bibliographystyle{RS}


\end{document}